\newcommand{\dd}{\displaystyle}
\def\OO{{\mathcal{O}}}
\def\RR{{\mathbb R}}
\def\NN{{\mathbb N}}
\def\EE{{\mathcal E}}
\def\WW{{\mathcal W}}
\def\TS{{\mathcal T}}
\def\CC{{\mathbb C}}
\def\DD{{\mathfrak D}}
\def\TCC{{\underline{\mathbb C}}}
\def\TRR{{\underline{\mathbb R}}}
\def\CCC{{\mathcal{C}}}
\def\CCCC{{\mathcal{C}}}%changer
\def\ZZ{{\mathbb Z}}
\def\MM{{\mathcal{M}}}
\def\MP{{\mathcal{P}}}
\def\HH{{\mathcal{H}}}
\def\PPP{{\mathfrak p}}
\def\WW{{\mathcal{W}}}
\def\DB{{\bar{\partial}}}
\def\DDJ{{\bar{\partial}_J}}
\def\d{{\mathrm d}}
\def\NF{{\mathcal{N}}}
\def\TT{{\mathcal{T}}}
\def\DDD{{\mathfrak{D}}}
\def\SSS{{\mathfrak{S}}}
\newcommand{\jet}{\mathcal{J}}
\newcommand{\riso}[1]{\RR Aut(#1)}
\newcommand{\raut}[1]{\RR GL(#1)}
\newcommand{\rdiff}[1]{\RR Diff^+(\Sigma_g,w_1(\RR #1))}
\newcommand{\rdif}{\RR Diff^+(\Sigma_g)}
\newcommand{\hczdeux}[2][]{H^1_{#1}(#2,\ZZ/2\ZZ)}
\newcommand{\surg}{\Sigma_g}
\newcommand{\sur}{\Sigma}
\newcommand{\rop}[1][N]{\RR \CCCC (#1)}
\newcommand{\ropj}[1][N]{\RR\CCCC_J(#1)}
\newcommand{\ropd}[1][N]{\RR \CCCC_D (#1)}
\newcommand{\ropc}[1][N]{\RR \CCCC_{compat} (#1)}
\newcommand{\ropcp}[1][N]{\RR \CCCC_{compat}^+ (#1)}
\newcommand{\Det}[1][N]{\ddet(#1)}
\newcommand{\cp}[1][1]{\CC P^{#1}}
\newcommand{\rp}[1][1]{\RR P^{#1}}
\newcommand{\dethun}{\det(H^1(\surg,\RR)_{-1})}
\newcommand{\dethunw}{O_X^{Spin}}
\newcommand{\dethunww}{H^1_w(\RR\surg,\RR)}
\newcommand{\relphi}{\Phi_{\varphi}}
\newcommand{\dethzero}{\RR_{w}}
\newcommand{\ud}{\underline{\mathbf{d}}}
\newcommand{\ux}{\underline{\mathbf{x}}}
\newcommand{\uz}{\underline{\mathbf{z}}}
\newcommand{\PV}[1][N]{\PPP^+(\RR #1)}
\DeclareMathOperator{\ddet}{Det}
\DeclareMathOperator{\coker}{coker}
\DeclareMathOperator{\card}{card}
\DeclareMathOperator{\id}{id}
\DeclareMathOperator{\rang}{rg}
\DeclareMathOperator{\pd}{PD}
\DeclareMathOperator{\e}{e}
\newtheorem{Theoreme}{Théorème}[section]
\newtheorem*{Theo}{Théorème}
\newtheorem{Corollaire}{Corollaire}[section]
\newtheorem{Proposition}{Proposition}[section]
\newtheorem{Lemme}{Lemme}[section]
\newtheorem{Definition}{Définition}[section]
\newtheorem*{Def}{Définition}
\newcounter{Notcount}
\newenvironment{Notation}{\refstepcounter{Notcount}\noindent\ignorespaces {\bf Notation \arabic{Notcount}}. --- }{\bigskip}
\newcounter{Remcount}[section]
\renewcommand{\theRemcount}{\thesection.\arabic{Remcount}}
\newenvironment{Rem}{\refstepcounter{Remcount}\noindent\ignorespaces {\bf Remarque \theRemcount}. --- }{\bigskip}
\newcounter{Explcount}[section]
\renewcommand{\theExplcount}{\thesection.\arabic{Explcount}}
\title{Déterminant des opérateurs de Cauchy-Riemann réels et application à l'orientabilité d'espaces de modules de courbes réelles}
\author{Rémi CR\'ETOIS}
\begin{document}

\maketitle

\centerline{\textbf{Abstract}}
The paper is motivated by the study of the orientability of the moduli spaces of real pseudoholomorphic curves in real symplectic manifolds. We begin by extending the results we obtained in \cite{article1}. Namely, we consider a complex vector bundle $(N,c_N)$ equipped with a real structure over a real curve $(\surg,c_\sur)$ of genus $g \in \NN$, and we give a description of the action of an automorphism of $(N,c_N)$ on the orientations of the determinant bundle over the space of all Cauchy-Riemann operators on $(N,c_N)$.

  In the second part, we use the previous decription to compute the first Stiefel-Whitney class of the moduli space of real pseudoholomorphic curves in many cases. We show for example that the real pseudoholomorphic curves with non-empty real part in the complex projective space of dimension three form an orientable moduli space for a generic almost complex structure.

An English summary is available online (\cite{summary}).
\\

\noindent
\textsc{Classification AMS 2010}: 14H60, 53D45\\
\textsc{Mots clés}: fibrés vectoriels, courbes réelles, opérateurs de Cauchy-Riemann, espaces de modules, invariants de Gromov-Witten

\tableofcontents

\section*{Introduction}

\subsection*{Motivation}

Une variété symplectique $(X,\omega)$ est dite réelle si elle est munie d'une involution anti-symplectique $c_X : X\rightarrow X$. Dans ce cas, on note $\RR J_\omega(X)$ l'ensemble des structures presque-complexes sur $X$ qui sont calibrées par $\omega$ et pour lesquelles $c_X$ est anti-holomorphe. \'Etant donnés un entier $g\geq 0$ et une classe $d\in H_2(X,\ZZ)$, un des problèmes ouverts de géométrie énumérative réelle consiste à déterminer à quelles conditions sur $(X,\omega,c_X)$ il est possible de définir pour tout $J\in \RR J_\omega(X)$ assez générique un compte de courbes $J$-holomorphes réelles de genre $g$ dans la classe $d$, soumises à des conditions d'incidence, de sorte que l'entier obtenu fournisse un invariant de la classe de déformation de $(X,\omega,c_X)$. En suivant l'approche de Gromov et Witten, Welschinger \cite{wel1}, \cite{wel3}, est parvenu à définir de tels invariants en genre $0$ pour les variétés symplectiques réelles de dimension $4$ et pour celles de dimension $6$ dont la forme symplectique vérifie une hypothèse de semipositivité forte. Ces travaux ont mis en lumière une différence fondamentale entre le cas réel et le cas complexe. En effet, des invariants de type Gromov-Witten sont définis en considérant le degré de l'application évaluation définie sur l'espace de modules des courbes rationnelles $J$-holomorphes dans $X$ munies de points marqués. Or, contrairement au cas complexe, l'espace de modules de courbes réelles rationnelles $J$-holomorphes dans $X$ n'est pas forcément orientable, rendant alors la définition du degré de l'évaluation sujette à caution. C'est ce problème d'orientabilité des espaces de modules de courbes pseudo-holomorphes réelles de tout genre qui motive \cite{article1} ainsi que le présent article.

 Dans leurs travaux sur l'homologie de Floer, Fukaya, Oh, Ohta et Ono (\cite{fooo}) ont introduit la notion de sous-variété relativement $Spin$ et montré que lorsque $\RR X := Fix(c_X)$ est relativement $Spin$, l'espace des disques pseudo-holomorphes à bord dans $\RR X$ et sans point marqué est orientable. D'autre part, la définition par Welschinger de ses invariants lui a notamment permis de donner un représentant géométrique du dual de cette première classe de Stiefel-Whitney dans le cas des variétés projectives réelles lisses et convexes de dimension deux et trois. Ce résultat a ensuite été précisé par Puignau (\cite{puignau}). De son côté, Cho (\cite{cho}) a repris la définition des invariants de Welschinger en utilisant les techniques de \cite{fooo}, de même que Solomon (\cite{Solomon}) qui les a étendus à des surfaces à bord de genre plus élevé et aux variétés symplectiques de dimension six quelconques. Signalons aussi l'article de Georgieva et Zinger \cite{georgieva} qui améliorent les techniques de Solomon pour les appliquer au cas des espaces de modules des courbes réelles en général.

Notre angle d'attaque de ce problème est différent des auteurs cités précédemment. En effet, nous commençons par étudier l'action, décrite plus bas, des automorphismes réels d'un fibré vectoriel complexe muni d'une structure réelle au-dessus d'une courbe réelle sur les orientations du fibré déterminant, puis nous utilisons les résultats obtenus afin d'en tirer des conclusions sur l'orientabilité des espaces de modules de courbes réelles pseudo-holomorphes dans des variétés symplectiques réelles.

\subsection*{Automorphismes réels d'un fibré et fibré déterminant}

Nous considérons le problème abstrait suivant, que nous avons commencé à aborder dans \cite{article1}. Soit $(\surg,c_\sur)$ une courbe réelle abstraite, c'est-à-dire un couple formé d'une surface compacte connexe sans bord de genre $g$ et orientée et d'un difféomorphisme involutif $c_\sur : \surg\rightarrow \surg$ qui renverse l'orientation. Nous noterons $\RR\surg$ l'ensemble des points fixes de $c_\sur$. C'est une sous-variété lisse de dimension $1$ de $\surg$, c'est-à-dire une union disjointe de cercles, éventuellement vide. Nous considérons un fibré vectoriel complexe $\pi : N\rightarrow \surg$ que nous munissons d'une structure réelle, c'est-à-dire d'un automorphisme involutif $c_N : N\rightarrow N$, $\CC$-antilinéaire dans les fibres et qui relève $c_\sur$. Nous noterons $\RR N$ l'ensemble des points fixes de $c_N$, qui est un fibré vectoriel réel au-dessus de $\RR\surg$. De plus, lorsque $c_\sur$ ou $c_N$ induisent une involution sur un espace vectoriel (ou sur un module), nous notons avec un indice $+1$ (resp. $-1$) le sous-espace propre associé à la valeur propre $+1$ (resp. $-1$) de cette involution. 

\begin{Def}
Un automorphisme du fibré $(N,c_N)$ est une paire $(\Phi,\varphi)$ composée
\begin{enumerate}
\item d'un difféomorphisme $\varphi : (\surg,c_\sur)\rightarrow (\surg,c_\sur)$ préservant les orientations et $\ZZ/2\ZZ$-équivariant,
\item d'une application $\Phi : (N,c_N) \rightarrow (N,c_N)$ qui est $\ZZ/2\ZZ$-équivariante, $\CC$-linéaire dans les fibres, et telle que $\pi\circ\Phi = \varphi\circ\pi$.
\end{enumerate}
\end{Def}

Nous noterons $\riso{N}$ l'ensemble de tous les automorphismes de $(N,c_N)$. Remarquons que lorsque $(\Phi,\varphi)\in\riso{N}$, le difféomorphisme $\varphi$ doit vérifier $\varphi^*w_1(\RR N) = w_1(\RR N)$. Nous posons donc
\[
\rdiff{N} = \{\varphi\in Diff^+(\surg)\ |\ \varphi\circ c_\sur = c_\sur \circ\varphi,\ \varphi^*w_1(\RR N) = w_1(\RR N)\}.
\]

Nous avons alors un morphisme naturel $\riso{N}\rightarrow \rdiff{N}$ qui est surjectif et dont le noyau est le groupe $\raut{N}$ des automorphismes de $(N,c_N)$ qui relèvent l'identité.

Notons maintenant $\RR J(\surg)$ l'ensemble des structures complexes de classe $\CCC^l$, $1\ll l$, sur $\surg$ qui sont compatibles avec l'orientation fixée et qui rendent $c_\sur$ anti-holomorphe. C'est une variété de Banach non vide et contractile (voir \cite{wel1}). Fixons un entier $1\ll k \ll l$ et un réel $p > 1$ tels que $kp > 2$.

\begin{Def}
  Un opérateur de Cauchy-Riemann réel sur $(N,c_N)$ est une paire $(\DB, J)$ formée d'une structure complexe $J \in \RR J(\surg)$ et d'un opérateur $\CC$-linéaire 
\[
\DB : L^{k,p}(\surg, N)\rightarrow L^{k-1,p}(\surg, \Lambda^{0,1}\surg\otimes N)
\]
équivariant sous l'action de $c_N$ et vérifiant la règle de Leibniz :
\[
\forall f \in \CCC^{\infty}(\surg,\CC),\ \forall v\in L^{k,p}(\surg,N),\ \DB(fv) = f\DB(v) + \DB_J(f)\otimes v,
\]
où $\DB_J = \frac{1}{2}(\d + i\circ \d\circ J)$.
\end{Def}

Un opérateur de Cauchy-Riemann réel $\DB$ sur $(N,c_N)$ induit par restriction un opérateur Fredholm de $L^{k,p}(\surg,N)_{+1}$ dans $L^{k-1,p}(\surg,N)_{+1}$ dont nous notons respectivement $H^0_{\DB}(\surg,N)_{+1}$ et $H_{\DB}^1(\surg,N)_{+1}$ le noyau et le conoyau.

Notons $\rop$ l'ensemble des opérateurs de Cauchy-Riemann réels sur $(N,c_N)$. Nous avons une application $(\DB,J)\in\rop \mapsto J\in\RR J(\surg)$ entre variété de Banach qui est un fibré dont la fibre $\ropj$ au-dessus de $J$ est un espace affine de dimension infinie. En particulier, $\rop$ est contractile. Il existe sur $\rop$ un fibré en droites réelles $\Det$ dont la fibre au-dessus de l'opérateur $\DB$ est son déterminant $\ddet(\DB) = \Lambda^{\max}_{\RR}H^0_{\DB}(\surg,N)_{+1}\otimes \left(\Lambda^{\max}_{\RR}H^1_{\DB}(\surg,N)_{+1}\right)^*$ (voir \cite{MDS}, Théorème A.2.2). Ce fibré est orientable, mais n'est pas canoniquement orienté.

Ce défaut d'orientation canonique est à l'origine des problèmes d'orientabilité des espaces de modules de courbes réelles dans des variétés symplectiques réelles, comme nous le verrons au \S \ref{module}. Nous étudions donc dans un premier temps l'action du groupe $\riso{N}$ sur les orientations de $\Det$. Cette action est définie de la façon suivante. Tout d'abord $\riso{N}$ agit sur les sections de $N$ et sur les $(0,1)$-formes à valeurs dans $N$ d'une part et sur les éléments de $\RR J(\surg)$ d'autre part par
\[
(\Phi,\varphi)_* s = \Phi(s)\circ\varphi^{-1},
(\Phi,\varphi)^*\alpha = \Phi^{-1}\circ\alpha\circ\d\varphi,
(\Phi,\varphi)^*J = \d\varphi^{-1}\circ J\circ\d \varphi,
\]
où $s$ est une section de $N$, $\alpha$ est une $(0,1)$-forme à valeurs dans $N$ et $J\in\RR J(\surg)$. Ceci induit une action sur $\rop$
\[
(\Phi,\varphi)^*(\DB,J) = ((\Phi,\varphi)^*(\DB((\Phi,\varphi)_*)),(\Phi,\varphi)^*J),
\]
et donc sur le fibré $\Det$.

Dans \cite{article1}, nous avons étudié l'action du groupe $\raut{N}$ des automorphismes de $(N,c_N)$ relevant l'identité sur les orientations de $\Det$. Nous avons notamment décomposé la contribution d'un élément de $f\in\raut{N}$ à cette action en deux termes : l'un comptant le nombre de composantes de $\RR N$ sur lesquelles $f$ échange les structures $Pin^{\pm}$; l'autre considérant l'action du déterminant de $f$, $\det(f)\in\raut{\det(N)}$, sur les orientations de $\Det[\det(N)]$, où $\det(N) = \Lambda^{\rang(N)}_{\CC}N$ est un fibré en droites complexes sur $\surg$. Plus précisément, nous avons montré que lorsque $N$ est de rang $1$, on peut définir une action de $\raut{N}$ sur les classes de bordisme de structures $Spin$ réelles de $(\surg,c_\sur)$ grâce à laquelle on exprime l'action de $\raut{N}$ sur les orientations de $\Det$ (voir les Théorèmes A et B de \cite{article1}).

\subsection*{Aperçu de l'article}

Nous reprenons et terminons l'étude de l'action du groupe $\riso{N}$ sur les orientations de $\Det$ là où nous nous étions arrêtés dans \cite{article1}. Nous considérons donc dans le \S 1 l'action des automorphismes de $(N,c_N)$ relevant des difféomorphismes non triviaux de $(\surg,c_\sur)$.

 Nous commençons par décomposer l'action d'un automorphisme $(\Phi,\varphi)\in\riso{N}$ sur les orientations de $\Det$ en un produit de trois termes (voir la Proposition \ref{lemrangun}). Nous obtenons le premier terme en faisant agir $\Phi$ sur le produit tensoriel $\PPP^+(\RR N) = \dd\bigotimes_{i = 1}^k Pin^+((\RR N)_i)$, où $Pin^+((\RR N)_i)$ est l'ensemble formé des deux structures $Pin^+$ sur la composante $(\RR N)_i$ de $\RR N$. Le second terme est donné par l'action d'un automorphisme particulier du fibré trivial $(\TCC^{\oplus \rang(N)-1},conj)$ sur les orientations de $\Det[\TCC^{\oplus \rang(N)-1}]$, que nous calculons au Lemme \ref{isopart}. Le dernier terme est égal à l'action de l'automorphisme $\det(\Phi)\in\riso{\det(N)}$ sur les orientations de $\Det[\det(N)]$. Le problème se réduit donc à l'étude du cas où $N$ est un fibré en droites complexes.

Nous traitons alors les cas particuliers des fibrés trivial $(\TCC,conj)$, canonique $(K_\sur,(\d c_\sur)^*)$ et tangent $(T\surg,\d c_\sur)$ (au \S \ref{extrivial} et \ref{extang}). Ceci nous permet d'étudier l'action du groupe $\RR Diff^+(\surg) = \{\varphi\in Diff^+(\surg)\ |\ c_\sur\circ\varphi = \varphi\circ c_\sur \}$ des difféomorphismes réels de $(\surg,c_\sur)$ sur les orientations de l'espace de Teichmüller réel (au \S \ref{parteich}). Ce dernier est le quotient de $\RR J(\surg)$ par l'action de la composante connexe de l'identité du groupe $\RR Diff^+(\surg)$. En particulier, nous obtenons le résultat suivant (voir Théorème \ref{actionteich}).

\begin{Theo}
Soit $(\surg,c_\sur)$ une courbe réelle de genre au moins deux. L'action d'un difféomorphisme réel $\varphi$ de $(\surg,c_\sur)$ sur les orientations de l'espace de Teichmüller réel associé à $(\surg,c_\sur)$ est donnée par le signe du déterminant de l'application $\varphi_*: H_1(\surg,\RR)_{+1}\rightarrow H_1(\surg,\RR)_{+1}$.
\end{Theo}

Nous traitons ensuite le cas d'un fibré en droites complexes $(N,c_N)$ quelconque. L'action des éléments de $\raut{N}$ sur les classes de bordisme de structures $Spin$ réelles de $(\surg,c_\sur)$ ne se prolonge pas en une action de tout $\riso{N}$; nous n'avons donc pas de généralisation directe des Théorèmes A et B de \cite{article1}. Pour remplacer cette action, nous introduisons la notion de diviseur réel associé à $(\det(N),c_{\det(N)})$ (voir \S \ref{diviseur}); ceux-ci sont les diviseurs invariants par $c_\sur$ tels que leur dual de Poincaré (respectivement, le dual de Poincaré de leur restriction à $\RR\surg$) vaut $c_1(N)$ (respectivement $w_1(\RR N)$). \'Etant donné un tel diviseur $D$, nous définissons un fibré $\ZZ/2\ZZ$-principal $\DD_D(N)$ au-dessus de $\RR J(\surg)$ (voir Définition \ref{defd}); de plus, l'action sur $\RR J(\surg)$ des difféomorphismes $\varphi\in\RR Diff^+(\surg)$ tels que $\varphi_* D = D$ se relève en une action des automorphismes $(\Phi,\varphi)\in\riso{N}$ sur $\DD_D(N)$. Nous utilisons ce fibré pour décomposer l'action d'un élément de $\riso{N}$ sur les orientations de $\Det$ en deux termes : l'un est donné par l'action sur $\DD_D(N)$ et généralise l'action des éléments de $\raut{N}$ sur les classes de bordisme de structures $Spin$ réelles de $(\surg,c_\sur)$, l'autre ne fait intervenir que le difféomorphisme sous-jacent. En particulier, un élément de $\raut{N}$ préserve les orientations de $\DD_D(N)$ si et seulement si il agit trivialement sur les orientations de $\Det$. 

Notons de plus que si tous les points de $D$ apparaissent avec multiplicité $\pm 1$, alors le choix d'une section réelle de $(\det(N),c_{\det(N)})$ qui s'annule transversalement aux points de $D$ avec un indice d'annulation donné par la multiplicité du point dans $D$ oriente le fibré $\DD_D(N)$. Deux telles sections sont, à homotopie près, image l'une de l'autre par un élément de $\raut{N}$, et un élément de $\raut{N}$ fixant une de ces sections à homotopie près est isotope à l'identité. Dans ce cas-ci, le choix d'une telle section nous permet de définir, à homotopie près, un relevé $(\Phi,\varphi)\in\riso{N}$ d'un difféomorphisme $\varphi\in\RR Diff^+ (\surg)$ tel que $\varphi_* D = D$; et si deux sections donnent la même orientation de $\DD_D(N)$, alors les deux relevés correspondant de $\varphi$ ont la même action sur les orientations de $\Det$, donnée par un terme ne faisant intervenir que $\varphi$. Pour le cas général, nous renvoyons au \S \ref{diviseur} pour la définition précise de $\DD_D(N)$.

La contribution du difféomorphisme sous-jacent $\varphi$ à l'action de $(\Phi,\varphi)\in\riso{N}$ sur les orientations de $\Det$, est définie en termes purement topologiques. Nous introduisons pour cela un espace vectoriel réel $\RR \jet_D$ qui est le sous-espace invariant par $c_\sur$ de la somme directe des puissances tensorielles successives des espaces tangents à $\surg$ aux points de $D$. En particulier, cet espace est orienté par le choix d'une numérotation des points de $D$ et d'une orientation de $\RR\surg$, et lorsque tous les points de $D$ apparaissent avec une multiplicité $\pm 1$, seule une numérotation suffit. Nous renvoyons au \S \ref{diviseur} pour la définition précise.

  Nous obtenons alors le résultat suivant (voir le Théorème \ref{totalaction}).

\begin{Theo}
Soit $(N,c_N)$ un fibré vectoriel complexe sur $(\surg,c_\sur)$. Soit $D$ un diviseur associé à $(\det(N),c_{\det(N)})$ et $(\Phi,\varphi)\in\riso{N}$ tel que $\varphi_*D = D$. L'action de $\Phi$ sur les orientations de $\Det$ est le produit de l'action de $\Phi$ sur $\PV$, de l'action de $\Phi$ sur les orientations de $\DD_D(N)$ et de l'action de $\varphi$ sur les orientations de
\[
\det(H^1(\surg,\RR)_{-1})^{\otimes\rang(N)}\otimes\det(\RR\jet_{D})\dd\bigotimes_{x\in D_{|\RR\surg}} (T_x^*\RR\surg)^{\otimes\max(0,mult_D(x))}.
\]
\end{Theo}

 Ce Théorème nous permet ainsi de décrire l'action d'un automorphisme de $(N,c_N)$ sur les orientations du fibré $\Det$ de façon plus topologique. Le Théorème \ref{totalaction} est en fait plus général, car nous obtenons dans un premier temps un isomorphisme canonique entre $\Det$ et
\[
\PV\otimes \DD_D(N)\otimes\det(H^1(\surg,\RR)_{-1})^{\otimes\rang(N)}\otimes\det(\RR\jet_{D})\dd\bigotimes_{x\in D_{|\RR\surg}} (T_x^*\RR\surg)^{\otimes\max(0,mult_D(x))}.
\]
et dans un deuxième temps, nous supprimons l'hypothèse $\varphi_* D = D$.

Nous améliorons enfin le résultat précédent dans deux cas particuliers : lorsque la courbe est séparante, c'est-à-dire lorsque $\surg\setminus\RR\surg$ n'est pas connexe, et lorsque la partie réelle de $N$ est non vide et orientable (voir Corollaires \ref{actionsepa} et \ref{actionspin}).

Nous donnons ensuite au \S \ref{module} quelques applications des résultats concernant l'action décrite précédemment au problème d'orientabilité des espaces de modules de courbes pseudo-holomorphes réelles dans des variétés symplectiques réelles. Plus précisément, nous considérons une variété symplectique réelle $(X,\omega,c_X)$, un entier $g$ et une classe $d\in H_2(X,\ZZ)$, et nous notons $\pi : \RR\MM_g^d(X)\rightarrow \RR J_{\omega}(X)$ l'espace de modules universel des courbes pseudo-holomorphes réelles de genre $g$ dans la classe $d$ qui sont injectives quelque part. Nous nous restreignons aux courbes injectives quelque part afin d'éviter tout problème analytique causé par les courbes qui sont des revêtements multiples. En effet, sous cette hypothèse, la fibre $\RR \MM_g^d(X,J) = \pi^{-1}\{J\}$ est une variété lisse de dimension finie pour tout $J \in \RR J_{\omega}(X)$ générique. Ceci nous permet de ne nous préoccuper que du problème d'orientabilité des espaces de modules. 

 Nous déduisons du \S \ref{autom} la première classe de Stiefel-Whitney des espaces de modules de courbes réelles dans $(X,\omega,c_X)$ auxquelles on ajoute une polarisation (voir le Théorème \ref{metath}). Afin d'obtenir des résultats sur l'espace $\RR \MM_g^d(X)$, nous introduisons une polarisation $D$ sur $(X,\omega,c_X)$, sorte d'analogue symplectique d'un diviseur anti-canonique, et nous notons $\RR J_D(X)\subset \RR J_{\omega}(X)$ l'ensemble des structures presque-complexes qui rendent $D$ holomorphe (voir \S \ref{defpol}). L'espace $\pi : \RR\MM_g^d(X,D)^{\pitchfork}\rightarrow \RR J_D(X)$ constitué des courbes réelles $J$-holomorphes pour des structures $J\in \RR J_D(X)$, de genre $g$, dans la classe $d$ et qui sont transverses à $D$, forme une sous-variété de Banach de $\RR\MM_g^d(X)$. Nous définissons de plus deux fibrés $\ZZ/2\ZZ$-principaux au-dessus de $\RR \MM_g^d(X,D)^{\pitchfork}$ : $T_{pol}$ dont la fibre au-dessus d'une courbe est formée des deux numérotations à permutation alternée près possible des points d'intersections de la courbe avec $D$ tensorisé avec le produit tensoriel des espaces cotangents à la partie réelle de la courbe aux points d'intersection réels positifs; $\PPP^+_X$ qui est orientable dès que $\RR X$ admet une structure $Pin^+$ (voir le Lemme \ref{fibr}). Alors, si $D$ est donné comme l'ensemble des zéros d'une section réelle de $(T X,\d c_X)$, que l'on appelle polarisante, nous obtenons (voir le Théorème \ref{caspol}) :

\begin{Theo}
Soit $(X^{2n},\omega,c_X)$ une variété symplectique réelle polarisée par le diviseur $D$ donné par une section polarisante. Pour $J\in\RR J_\omega(X)$ générique rendant $D$ holomorphe, le fibré en droites réelles $\det(T \RR\MM_g^d(X,D,J)^{\pitchfork})$ est canoniquement isomorphe à
\[\label{fi}
\PPP^+_X\otimes \dethun^{\otimes n-1}\otimes T_{pol}\tag{$*$}
\]
au-dessus de $\RR\MM_g^d(X,D,J)^{\pitchfork}$.
\end{Theo}

Le Théorème \ref{caspol} est en fait plus général. En effet, il établi un isomorphisme canonique entre (\ref{fi}) et le déterminant de l'application Fredholm $\pi : \RR \MM_g^d(X,D)^{\pitchfork}\rightarrow \RR J_D(X)$, qui est en quelque sorte le fibré des orientations relatives de $\RR \MM_g^d(X,D)^{\pitchfork}$.

Nous donnons enfin des exemples de polarisations et de sections polarisantes dans le \S \ref{existpol} notamment dans le cas des hypersurfaces de $\cp[n]$ et dans le cas général en utilisant des hypersurfaces de Donaldson.

Comme dans le \S \ref{autom}, nous faisons apparaître deux cas particuliers dans lesquels nous avons de meilleurs résultats pour le problème considéré (voir \S \ref{deuxcas}). En particulier, nous obtenons le résultat suivant (voir le Corollaire \ref{hypercorollaire}) :

\begin{Theo}
 Soit $d\in H_2(\cp[3],\ZZ)$, $g\in\NN$ et $J \in\RR J_{\omega_{Fub}}(\cp[3])$ générique. Les composantes connexes de $\RR \MM_g^d(\cp[3],J)$ dont les éléments intersectent $\rp[3]$ sont orientables.
\end{Theo}
\bigskip

Ces résultats concernant les espaces de modules sont un premier pas dans la généralisation des invariants de Welschinger en genre supérieur et en dimension supérieure à $6$. Comme nous l'avons mentionné au début de cette introduction, d'autres problèmes d'ordre analytique apparaissent. Nous avons à l'avenir l'intention de les étudier dans des cas particuliers afin de pouvoir appliquer les résultats du présent article à des problèmes énumératifs concrets.

{\bf Notations}

Dans tout l'article, $(\surg,c_\sur)$ sera une courbe réelle abstraite et $(N,c_N)$ un fibré vectoriel complexe muni d'une structure réelle au-dessus de $(\surg, c_\sur)$.

\begin{itemize}
\item $\RR J(\surg)$ : l'ensemble des structures complexes sur $\surg$ qui rendent $c_\sur$ antiholomorphe.
\item $\rop$ : l'ensemble des opérateurs de Cauchy-Riemann réels sur $(N,c_N)$ (voir \cite{article1}). Pour tout $J\in \RR J(\surg)$, $\ropj$ est le sous-ensemble de $\rop$ formé des opérateurs pour lesquels la structure complexe sous-jacente sur $\surg$ est donnée par $J$.
\item $\riso{N}$ : le groupe des automorphismes de $N$ qui commutent avec $c_N$.
\item $\raut{N}$ : le sous-groupe de $\riso{N}$ formé des automorphismes relevant l'identité de $\surg$.
\item $\rdiff$ : le groupe des difféomorphismes de $\surg$ qui préservent l'orientation fixée, commutent avec $c_\sur$ et fixent $w_1(\RR N)$.
\end{itemize}

\section{\'Etude des automorphismes réels}\label{autom}

\subsection{Action des automorphismes sur les structures $Pin^\pm$}\label{parpin}

Soit $(N,c_N)$ un fibré vectoriel complexe muni d'une structure réelle sur une courbe réelle $(\surg,c_\sur)$. Un élément $\Phi$ de $\riso{N}$ induit naturellement une permutation $\Phi_{\PPP^+}$ (resp. $\Phi_{\PPP^-}$) de l'ensemble $Pin^+(\RR N)$ (resp. $Pin^-(\RR N)$) à $2k$ éléments formé des structures $Pin^+$ (resp. $Pin^-$) de $\RR N$  en tirant en arrière celles-ci (voir \cite{kirby}). Ceci va nous permettre de nous ramener au cas du rang $1$ (voir la Proposition \ref{lemrangun} et aussi le Lemme 4.1 de \cite{article1}). Avant cela, nous remarquons dans la Proposition \ref{pinori} qu'en général l'action d'un élément de $\riso{N}$ n'est pas la même sur les structures $Pin^+$ et sur les structures $Pin^-$ de $\RR N$. Plus précisément, considérons un difféomorphisme $\varphi\in\rdiff{N}$. Il induit une permutation $\sigma_\varphi$ sur l'ensemble à $2k$ éléments formé des orientations des composantes connexes de $\RR \surg$. Cette permutation laisse stable le sous-ensemble à $2k_-$ éléments formé des orientations de composantes connexes de $\RR\surg$ sur lesquelles $\RR N$ n'est pas orientable. Notons $\sigma^-_\varphi$ la permutation induite sur cet ensemble. Enfin, pour toute permutation $\sigma$ d'un ensemble fini, nous notons $\varepsilon(\sigma)$ la signature de $\sigma$.

\begin{Proposition}\label{pinori}
  Soit $(N,c_N)$ un fibré vectoriel complexe muni d'une structure réelle sur $(\surg,c_\sur)$, de partie réelle non vide, et soit $(\Phi,\varphi)\in\riso{N}$. On a $\varepsilon(\Phi_{\PPP^+}) = \varepsilon(\Phi_{\PPP^-})\varepsilon(\sigma^-_\varphi)$, où $\Phi_{\PPP^+}$ (resp. $\Phi_{\PPP^-}$) est la permutation induite par $\Phi$ sur $Pin^+(\RR N)$ (resp. sur $Pin^-(\RR N)$), et $\sigma^-_\varphi$ est la permutation induite par $\varphi$ sur l'ensemble des orientations des composantes connexes de $\RR\surg$ sur lesquelles $\RR N$ n'est pas orientable.
\end{Proposition}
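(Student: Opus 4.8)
Le plan est de se ramener composante par composante de $\RR\surg$ à l'étude d'un cercle, puis d'y comparer explicitement les relèvements de l'holonomie dans $Pin^+(r)$ et $Pin^-(r)$. On écrit $\RR\surg = \bigsqcup_{i=1}^k C_i$, de sorte que $Pin^+(\RR N) = \bigsqcup_{i=1}^k Pin^+((\RR N)_i)$ et de même pour $Pin^-$, chaque bloc $Pin^\pm((\RR N)_i)$ ayant deux éléments. Le difféomorphisme $\varphi$ induit une permutation $\pi_\varphi$ des indices, et comme $\varphi^* w_1(\RR N) = w_1(\RR N)$, celle-ci préserve le sous-ensemble $I\subset\{1,\dots,k\}$, de cardinal $k_-$, des indices pour lesquels $(\RR N)_i$ n'est pas orientable. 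L'idée est de construire une bijection $\Psi : Pin^+(\RR N)\to Pin^-(\RR N)$ et de mesurer son défaut d'équivariance sous $\Phi$.

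Je construirais d'abord, pour chaque orientation $o_i$ de $C_i$, une bijection $\psi_{o_i} : Pin^+((\RR N)_i)\to Pin^-((\RR N)_i)$. Sur un cercle, une structure $Pin^\pm$ équivaut au choix d'un relèvement dans $Pin^\pm(r)$ de l'holonomie $g$ du fibré des repères le long du lacet positif déterminé par $o_i$. En réalisant $Pin^\pm(r)$ dans l'algèbre de Clifford, un tel relèvement s'écrit $\pm v$ où $v$ est un vecteur unitaire tel que $g$ soit la réflexion d'hyperplan $v^\perp$ (cas non orientable), ou un élément de $Spin(r)$ (cas orientable); on pose $\psi_{o_i}(\pm v) = \pm v$, ce qui a un sens car la partie de degré $1$ des deux algèbres de Clifford est le même espace $\RR^r$. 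Le point-clé (voir \cite{kirby}) est le comportement sous renversement d'orientation : renverser $o_i$ remplace $g$ par $g^{-1}$, donc $\pm v$ par son inverse. Dans $Pin^+(r)$ on a $v^2 = +1$, d'où $v^{-1}=v$ et un relèvement inchangé; dans $Pin^-(r)$ on a $v^2 = -1$, d'où $v^{-1}=-v$ et un relèvement changé en son opposé. On obtient ainsi, sur une composante non orientable, $\psi_{-o_i} = \tau_i\circ\psi_{o_i}$, où $\tau_i$ échange les deux éléments de $Pin^-((\RR N)_i)$, tandis que sur une composante orientable l'holonomie est dans $SO(r)$, les relèvements vivent dans $Spin(r)\subset Pin^\pm(r)$, et $\psi_{o_i}$ ne dépend pas de $o_i$.

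Fixons une orientation $o_i$ de chaque $C_i$ et posons $\Psi = \bigsqcup_i \psi_{o_i}$. La naturalité du relèvement de l'holonomie sous les morphismes de fibrés donne, du bloc $i$ vers le bloc $\pi_\varphi(i)$, l'égalité $\Phi_{\PPP^-}\circ\psi_{o_i} = \psi_{\varphi_* o_i}\circ\Phi_{\PPP^+}$, où $\varphi_* o_i$ est l'orientation image. En écrivant $\varphi_* o_i = (-1)^{\delta_i} o_{\pi_\varphi(i)}$ avec $\delta_i\in\{0,1\}$, et en appliquant $\psi_{-o}=\tau\circ\psi_o$ sur les composantes de $I$, on obtient $\Phi_{\PPP^-}\circ\Psi = T\circ\Psi\circ\Phi_{\PPP^+}$, où $T = \prod_{i\in I}\tau_{\pi_\varphi(i)}^{\delta_i}$ est une permutation de $Pin^-(\RR N)$ agissant à l'intérieur des blocs. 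Comme $\Psi$ est une bijection, la conjugaison préserve la signature et $\varepsilon(\Phi_{\PPP^-}) = \varepsilon(T)\,\varepsilon(\Phi_{\PPP^+})$. Chaque $\tau_j$ étant une transposition de l'ensemble à $2k$ éléments, $\varepsilon(T) = (-1)^{N}$ où $N = \#\{i\in I : \delta_i = 1\}$ est le nombre de composantes non orientables dont $\varphi$ renverse l'orientation.

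Il reste à reconnaître $\varepsilon(\sigma^-_\varphi)$. La permutation $\sigma^-_\varphi$ de l'ensemble à $2k_-$ éléments des orientations des composantes de $I$ se décompose en le relèvement « diagonal » de $\pi_\varphi|_I$, de signature $\varepsilon(\pi_\varphi|_I)^2 = 1$, composé avec les échanges d'orientation sur les composantes renversées; on trouve donc $\varepsilon(\sigma^-_\varphi) = (-1)^{N} = \varepsilon(T)$. Les trois quantités étant des signes, on conclut $\varepsilon(\Phi_{\PPP^+}) = \varepsilon(\Phi_{\PPP^-})\,\varepsilon(\sigma^-_\varphi)$. \textbf{La difficulté principale} est l'étape de comparaison sur un cercle non orientable : il faut établir proprement, via le modèle de Clifford de \cite{kirby}, que le signe distinguant $Pin^+$ de $Pin^-$ sous renversement d'orientation est exactement celui encodé par $\sigma^-_\varphi$; le reste n'est qu'un décompte de signatures.
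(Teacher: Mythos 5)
Your proof is correct, and it arrives at the same local computation as the paper by a different combinatorial route. The paper first proves Lemme \ref{pinpmsign}, decomposing $\varepsilon(\Phi_{\PPP^\pm})$ and $\varepsilon(\sigma^-_\varphi)$ into products of signs attached to the cycles of the permutation of components of $\RR\surg$, and then analyses, cycle by cycle, the return map $\Phi^{l(c_i)}$ on a single component: it reduces to the model automorphism $g(t,v)=(1-t,v)$ of $[0,1]\times\RR^n/(0,v)\sim(1,rv)$ and checks that a lift $\tilde g^\pm$ exists if and only if $e_1^\pm e_1^\pm=1$. You instead build a global comparison bijection $\Psi:Pin^+(\RR N)\to Pin^-(\RR N)$ from a choice of orientations and measure its failure to intertwine $\Phi_{\PPP^+}$ with $\Phi_{\PPP^-}$ by a product of in-block transpositions; the identity $\varepsilon(\Phi_{\PPP^-})=\varepsilon(T)\,\varepsilon(\Phi_{\PPP^+})$ then follows from invariance of the signature under conjugation by a bijection, and your count $\varepsilon(T)=(-1)^N=\varepsilon(\sigma^-_\varphi)$ agrees with the paper's cycle-by-cycle computation. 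Your Clifford computation $v^{-1}=v$ in $Pin^+(r)$ versus $v^{-1}=-v$ in $Pin^-(r)$ is exactly the paper's $e_1^\pm e_1^\pm=\pm1$. The one step you state too quickly is the \emph{naturalité} $\Phi_{\PPP^-}\circ\psi_{o_i}=\psi_{\varphi_*o_i}\circ\Phi_{\PPP^+}$: it is not formal, because the identification of the odd parts of the two Clifford algebras is not multiplicative. It holds because, once both circles are oriented so that $\varphi$ is orientation-preserving, $\Phi$ differs from the standard identification of the normalized models by an automorphism over the identity, and such automorphisms act identically (both trivially or both by the swap) on $Pin^+$ and $Pin^-$ structures --- the same fact the paper invokes without proof (\og l'action d'un tel automorphisme est la même sur les structures $Pin^+$ et $Pin^-$\fg). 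With that fact made explicit your argument is complete; what your packaging buys is a clean separation between the circle-level sign and a purely combinatorial signature count, at the small extra cost of checking that $\psi_{o_i}$ does not depend on $o_i$ on the orientable components (true, since there the normalized holonomy lift is $\pm1\in Spin(r)$, which is fixed by inversion).
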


Commençons par donner une autre définition des différentes signatures intervenant dans l'énoncé. Le difféomorphisme $\varphi$ induit une permutation des composantes connexes de $\RR\surg$ dont on note $c_1\ldots c_l\ldots c_m$ la décomposition en cycles disjoints. On suppose de plus que le support des cycles $c_1,\ldots,c_l$ est formé des composantes de $\RR\surg$ sur lesquelles $\RR N$ n'est pas orientable. On note $l(c_i)$ la longueur du cycle $c_i$.

 On associe alors à chaque cycle $c_i$ 
 \begin{itemize}
 \item deux éléments $s_{\PPP^\pm}(c_i)\in\{-1,1\}$: soit $(\RR \surg)_j$ dans le support de $c_i$, alors $s_{\PPP^\pm}(c_i)$ vaut $1$ si $\Phi^{l(c_i)}$ (qui envoie la composante $(\RR N)_j$ sur elle-même) préserve les structures $Pin^\pm$ de $(\RR N)_j$ et vaut $-1$ sinon.
\item un élément $s_\varphi(c_i)\in\{-1,1\}$: soit $(\RR \surg)_j$ dans le support de $c_i$, alors $s_\varphi(c_i)$ vaut $1$ si $\varphi^{l(c_i)}$ (qui envoie la composante $(\RR \surg)_j$ sur elle-même) préserve les orientations de $(\RR\surg)_j$ et vaut $-1$ sinon.
 \end{itemize}

\begin{Lemme}\label{pinpmsign}
  Les signes $s_{\PPP^\pm}(c_i)$ et $s_\varphi(c_i)$ ne dépendent pas du choix de la composante $(\RR\surg)_j$ choisie. De plus, on a $\varepsilon(\Phi_{\PPP^\pm}) = \dd\prod_{i=1}^m s_{\PPP^\pm}(c_i)$, et $\varepsilon(\sigma^-_\varphi) = \dd\prod_{i=1}^l s_\varphi(c_i)$.
\end{Lemme}

\begin{proof}
Traitons le cas de $\sigma^-_\varphi$, l'autre étant tout à fait analogue. Fixons un cycle $c_i$, $1\leq i\leq l$. Son support est formé de $l(c_i)$ composantes connexes de $\RR \surg$. Si $c'_i$ est un cycle de la décomposition en cycles disjoints de $\sigma^-_\varphi$ dont le support contient une orientation d'une des composantes du support de $c_i$, alors $l(c'_i)$ vaut soit $l(c_i)$ soit $2l(c_i)$. Ainsi deux cas se présentent (voir Figure \ref{permut}):
\begin{itemize}
\item soit $s_\varphi(c_i) = 1$, autrement dit $(c'_i)^{l(c_i)}$ préserve les orientations de $(\RR \surg)_j$, et $l(c'_i) = l(c_i)$,
\item soit $s_\varphi(c_i) = -1$, autrement dit $(c'_i)^{l(c_i)}$ échange les orientations de $(\RR \surg)_j$, et $l(c'_i) = 2l(c_i)$.
\end{itemize}

  \begin{figure}[h]
    \centering
    \input{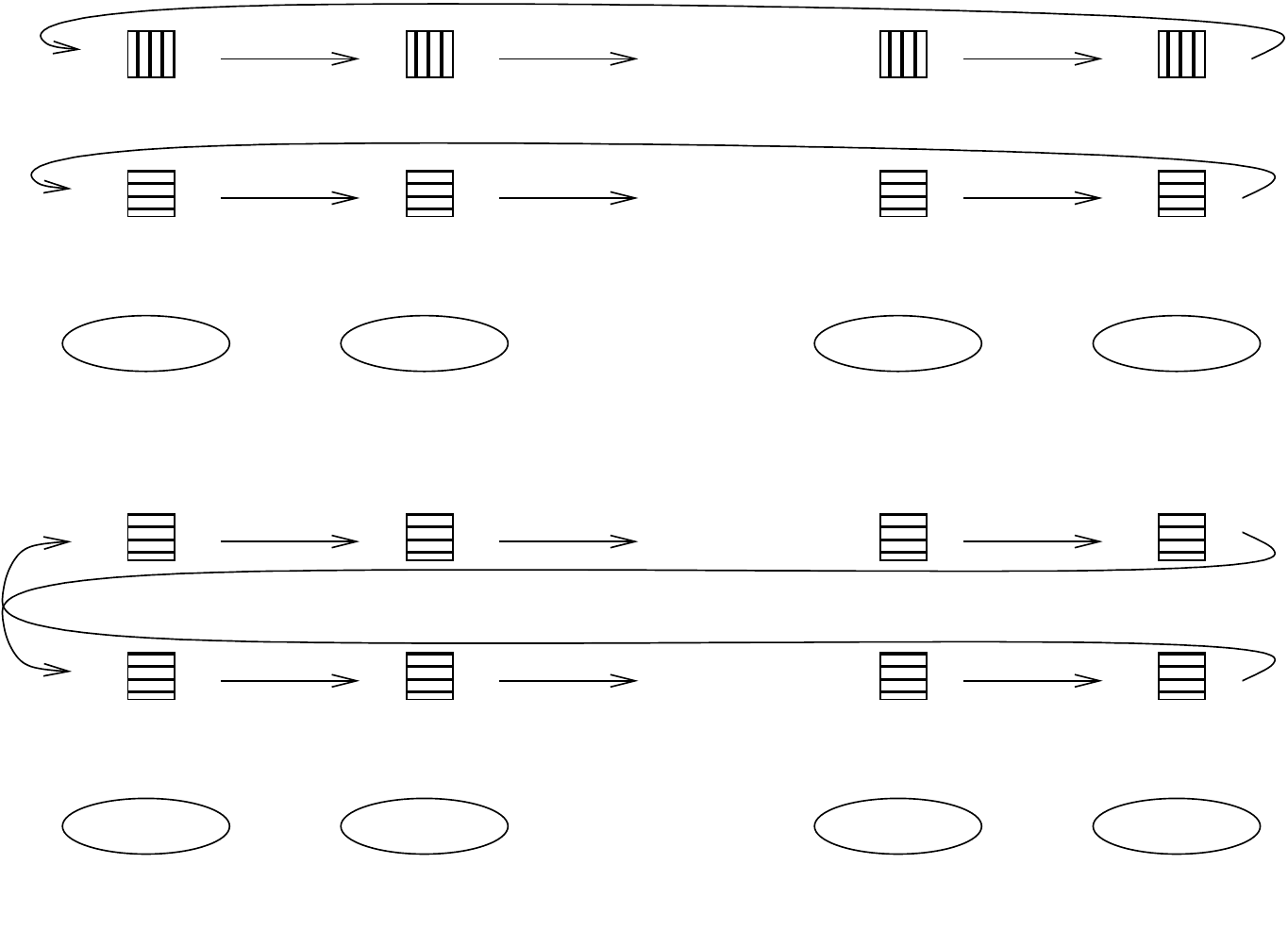_t}
    \caption{Le cycle $c'_i$ lorsque $s_\varphi(c_i) = 1$ en haut, et lorsque $s_\varphi(c_i)=-1$ en bas.}
    \label{permut}
  \end{figure}

Dans le premier cas, il existe un autre cycle $c''_i$ dont le support est formé des orientations opposées à celles de $c'_i$. Dans le deuxième cas, la support de $c'_i$ contient toutes les orientations des composantes formant le support de $c_i$.

La décomposition en cycles de supports disjoints de $\sigma^-_\varphi$ est donc de la forme 
\[
\dd\prod_{s_\varphi(c_i)=1} (c'_ic''_i)\prod_{s_\varphi(c_i)=-1}c'_i.
\]
Dans le premier produit, comme les cycles $c'_i$ et $c''_i$ sont de même longueur, leurs contributions au calcul la signature de $\sigma^-_\varphi$ se compensent. Ainsi
\[
\varepsilon(\sigma^-_\varphi) = \prod_{s_\varphi(c_i)=-1}(-1)^{2l(c_i)-1} = \dd\prod_{i=1}^l s_\varphi(c_i).
\]
\end{proof}

\begin{proof}[Démonstration de la Proposition \ref{pinori}]
  Grâce au Lemme \ref{pinpmsign} on peut réécrire l'égalité
\[
\varepsilon(\Phi_{\PPP^+}) = \varepsilon(\Phi_{\PPP^-})\varepsilon(\sigma^-_\varphi)
\]
sous la forme
\[
\dd\prod_{i=1}^m s_{\PPP^+}(c_i) = \dd\prod_{i=1}^m s_{\PPP^-}(c_i)\dd\prod_{i=1}^l s_\varphi(c_i).
\]
Ainsi, il suffit de démontrer que pour chaque cycle $c_i$, l'action de $\Phi^{l(c_i)}$ sur les structures $Pin^+$ de $(\RR N)_j$, où $(\RR\surg)_j$ est dans le support de $c_i$, coïncide avec celle sur les structures $Pin^-$ de $(\RR N)_j$ si et seulement si $(\RR N)_j$ est orientable ou bien $(\RR N)_j$ est non orientable et $\varphi^{l(c_i)}$ préserve les orientations de $(\RR\surg)_j$. 

Remarquons d'abord que si $\varphi^{l(c_i)}$ préserve les orientations de $(\RR\surg)_j$ alors on peut homotoper $\Phi^{l(c_i)}$ en un automorphisme de $(\RR N)_j$ qui relève l'identité sur $(\RR\surg)_j$. Or l'action d'un tel automorphisme est la même sur les structures $Pin^+$ et $Pin^-$.

Supposons maintenant que $\varphi^{l(c_i)}$ échange les orientations de $(\RR\surg)_j$. On choisit un isomorphisme entre $(\RR N)_j$ et le modèle $[0,1]\times \RR^n /(0,v)\sim(1,rv)$, où $r$ est l'identité si $(\RR N)_j$ est orientable, et la symétrie orthogonale d'axe $e_1 = (1,0,\ldots,0)$ sinon. L'automorphisme $\Phi^{l(c_i)}_{|(\RR N)_j}$ induit un automorphisme $f$ du modèle, et quitte à l'homotoper, on peut supposer qu'il est de la forme $f(t,v) = (1-t,f_t(v))$ où $f_t$ est un élément de $O_n(\RR)$ et $f_0 = r f_1 r$. Or, deux relevés de $t\mapsto 1-t$ diffèrent d'un automorphisme au-dessus de l'identité et on sait que ces derniers agissent de la même manière sur les structures $Pin^+$ et $Pin^-$. Il suffit donc d'étudier l'action de l'automorphisme $g:(t,v)\mapsto (1-t,v)$ sur les structures $Pin^+$ et $Pin^-$ de $[0,1]\times \RR^n /(0,v)\sim(1,rv)$.

Notons $\pi_\pm:Pin_n^\pm(\RR)\rightarrow O_n(\RR)$ les revêtements doubles et considérons les structures $Pin^\pm$ sur $[0,1]\times \RR^n /(0,v)\sim(1,rv)$ données par $[0,1]\times Pin_n^\pm(\RR) /(0,p)\sim(1,e_1^\pm p)\xrightarrow{\pi^\pm} [0,1]\times O_n(\RR) /(0,v)\sim(1,rv)$ avec $\pi_{\pm}(e_1^\pm) = r$. Un relevé $\tilde{g}^\pm$ de $g$ au niveau des structures $Pin^\pm$ doit vérifier pour tout $p\in Pin^\pm_n(\RR)$, $g\circ \pi_\pm (p) = \pi_\pm \circ\tilde{g}^\pm(p)$. Donc les seuls relevés possibles de $g$ sont $\tilde{g}^\pm: (t,p)\mapsto (1-t,p)$ ou $\tilde{g}^\pm: (t,p)\mapsto (1-t,-p)$.

 Or ces relevés doivent aussi vérifier pour tout $p\in Pin^\pm_n(\RR)$, $\tilde{g}_0^\pm(p) = e^\pm_1\tilde{g}_1^\pm(e^\pm_1 p)$. Mais ceci n'est possible que si $e^\pm_1 e_1^\pm = 1$. Ainsi, si $(\RR N)_j$ est orientable $g$ préserve les structures $Pin^\pm$, et si $(\RR N)_j$ n'est pas orientable, $g$ préserve les structures $Pin^+$ mais pas les structures $Pin^-$.
\end{proof}

\begin{Rem}
  Notons que si $\Phi$ est un élément de $\riso{N}$, la valeur de $\varepsilon(\Phi_{\PPP^+})$ calcule aussi l'action de $\Phi$ sur $\PV =\dd \bigotimes_{i=1}^kPin^+((\RR N)_i)$. D'autre part, si $\varphi \in \RR Diff^+(\surg,w_1(\RR N))$, alors la valeur de $\varepsilon(\sigma_{\varphi}^-)$ calcule aussi l'action de $\varphi$ sur les orientations de $\dd\bigotimes_{w_1(\RR N)([\RR\surg]_i) = 1} T_{x_i}\RR \surg$, où $x_i\in (\RR\surg)_i$.
\end{Rem}

 Considérons un fibré en droites réelles $L$ au-dessus d'un cercle et notons $\TRR$ le fibré trivial au-dessus de ce même cercle. Alors, pour tout $n\geq 0$ nous contruisons une structure $Pin^+$ sur $L\oplus \TRR^{\oplus n}$ de la façon suivante. Fixons tout d'abord un isomorphisme entre $L$ et $[0,1]\times\RR /(0,v)\sim (1,rv)$, où $r$ vaut $1$ si $L$ est orientable et $-1$ sinon. Notons qu'on a en particulier orienté la base du fibré. Fixons une fois pour toutes un relevé $e$ de $-1$ dans $Pin_1^+(\RR)$. Lorsque $L$ est orientable, on lui associe la structure $Pin^+$ donnée par $[0,1]\times Pin_1^+(\RR)/(0,p)\sim(1,p)$. Dans le cas contraire on prend $[0,1]\times Pin_1^+(\RR)/(0,p)\sim(1,ep)$. On obtient alors une structure $Pin^+$ notée $\PPP^+_{L}$ sur $L\oplus \TRR^{\oplus n}$ par stabilisation (voir \cite{kirby}).

\begin{Lemme}\label{pintriv}
  Soit $n\in \NN$. Considérons $L$ et $L'$ deux fibrés en droites réelles au-dessus d'un cercle et le fibré trivial $\TRR$ au-dessus du même cercle. Les structures $\PPP^+_{L}$ et $\PPP^+_{L'}$ ne dépendent pas des choix effectués.

De plus tout isomorphisme entre $L \oplus \TRR^{\oplus n}$ et $L' \oplus \TRR^{\oplus n}$ de la forme $\Phi_L\oplus \Phi_1\oplus\ldots\oplus \Phi_n$, où $(\Phi_L,\varphi)$ est un isomorphisme entre $L$ et $L'$, et $(\Phi_i,\varphi)$ sont des automorphismes de $\TRR$ préservant les orientations, envoie la structure $\PPP^+_{L}$ sur $\PPP^+_{L'}$. En particulier, lorsque $L = L'$, un automorphisme de la forme $\Phi_L\oplus \Phi_1\oplus\ldots\oplus\Phi_n$ préserve les structures $Pin^+$ de $L\oplus \TRR^{\oplus n}$.
\end{Lemme}

Le Lemme \ref{pintriv} nous sera utile dans la Proposition \ref{lemrangun} ainsi que dans le Théoreme \ref{totalaction}.

\begin{proof}
Soit $M = [0,1]\times\RR^{n+1} /(0,v)\sim (1,rv)$ où $r$ est l'identité de $\RR^{n+1}$ si $L$ et $L'$ sont orientables et la réflexion $r_0$ définie par $r_0(x_0,x_1,\ldots,x_n)= (-x_0,x_1,\ldots,x_n)$ sinon. Par construction de $\PPP^+_L$ et $\PPP^+_{L'}$, il suffit de vérifier qu'un automorphisme $g$ de $M$ de la forme $g(t,x) = (t,(r_0)^{\varepsilon_0} (x))$ ou bien $g(t,x) = (1-t,(r_0)^{\varepsilon_0} (x))$, avec $\varepsilon_0 \in \{0,1\}$ préserve les structures $Pin^+$ de $M$.

Notons $e_0$ le relevé de $r_0$ dans $Pin^+_{n+1}(\RR)$ obtenu par stabilisation de l'élément $e\in Pin^+_1(\RR)$ fixé. Lorsque le fibré $M$ est orientable, on lui associe donc la structure $Pin^+$ donnée par $[0,1]\times Pin_{n+1}^+(\RR)/(0,p)\sim(1,p)$. Dans le cas contraire on prend $[0,1]\times Pin_{n+1}^+(\RR)/(0,p)\sim(1,e_0 p)$. Il faut vérifier que les automorphismes précédents admettent un relevé comme automorphismes des fibrés $Pin^+_{n+1}(\RR)$ principaux précédents. Or ceci découle directement du fait que $e_0e_0 = 1$.
\end{proof}

\begin{Definition}\label{pincan}
  On dira que la struture $\PPP^+_L$ sur $L\oplus \TRR^{\oplus n}$ ainsi obtenue est associée à $L$.
\end{Definition}

\begin{Rem}
  Insistons sur le fait que le Lemme \ref{pintriv} n'est plus vrai si l'on remplace $Pin^+$ par $Pin^-$.
\end{Rem}

 \'Etant donné un difféomorphisme réel $\varphi$ de $(\surg,c_\sur)$, on définit l'automorphisme $\relphi\in\riso{\TCC^{\oplus n}}$ du fibré trivial $(\TCC^{\oplus n}, conj)$ de rang $n$ par
\[
\begin{array}{l c l c}
\relphi : & \TCC^{\oplus n} & \rightarrow & \TCC^{\oplus n}  \\
            & (x,v)  & \mapsto    & (\varphi(x),v).
\end{array}
\]
Cet automorphisme est un relevé de $\varphi$.

\'Enonçons maintenant la Proposition \ref{lemrangun} qui permet de décomposer le signe de l'action d'un automorphisme sur les orientations de $\Det$ en trois termes.

\begin{Proposition}\label{lemrangun}
  Soit $(N,c_N)$ un fibré vectoriel complexe de rang au moins deux sur $(\surg,c_\sur)$ et $(\Phi,\varphi)\in\riso{N}$. Alors le signe de l'action de $\Phi$ sur les orientations du fibré $\Det$ est égal au produit de $\varepsilon(\Phi_{\PPP^+})$ avec le signe de l'action de $\det(\Phi)$ sur les orientations de $\Det[\det(N)]$ et le signe de l'action de $\relphi$ sur les orientations de $\Det[\TCC^{\oplus \rang(N)-1}]$.
\end{Proposition}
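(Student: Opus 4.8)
The plan is to factor the automorphism $(\Phi,\varphi)$ as an identity-lifting automorphism composed with a block-diagonal ``standard'' lift adapted to the splitting of $N$ into its determinant line and a trivial bundle, then to treat the identity-lifting factor by the results of \cite{article1} and the block-diagonal factor by the multiplicativity of the determinant under direct sums. First I would fix a $c_N$-equivariant isomorphism of complex vector bundles
\[
(N,c_N)\cong(\det(N),c_{\det(N)})\oplus(\TCC^{\oplus\rang(N)-1},\conj),
\]
which exists because over the surface $\surg$ a complex bundle of rank at least two is determined, up to trivial summands, by its determinant line (the obstruction to splitting off trivial factors lives in degrees $>2$), and this can be carried out $\ZZ/2\ZZ$-equivariantly over $(\surg,c_\sur)$. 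Under this splitting I set $\Psi=\det(\Phi)\oplus\relphi\in\riso N$, a lift of $\varphi$, and $f=\Phi\circ\Psi^{-1}$. Since $\Phi$ and $\Psi$ lift the same diffeomorphism, $f$ lifts the identity, so $f\in\raut N$; moreover a direct computation of $\Lambda^{\rang(N)}\Psi$ on $\Lambda^{\rang(N)}(\det(N)\oplus\TCC^{\oplus\rang(N)-1})\cong\det(N)$ shows $\det(\Psi)=\det(\Phi)$, whence $\det(f)=\id$.

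Because $\rop$ is contractible, the sign of the action of an automorphism on the orientations of $\Det$ is globally well defined, and $(\Phi,\varphi)\mapsto\operatorname{sign}(\Phi)$ is a homomorphism to $\{\pm1\}$; in particular $\operatorname{sign}(\Phi)=\operatorname{sign}(f)\cdot\operatorname{sign}(\Psi)$. To compute $\operatorname{sign}(\Psi)$ I would evaluate at a split real Cauchy-Riemann operator $\DB=\DB_{\det(N)}\oplus\DB_{\TCC^{\oplus\rang(N)-1}}$ (permissible by contractibility) and use the canonical isomorphism $\ddet(\DB)\cong\ddet(\DB_{\det(N)})\otimes\ddet(\DB_{\TCC^{\oplus\rang(N)-1}})$ of \cite{MDS}: as $\Psi$ is block-diagonal its action splits as a tensor product, so $\operatorname{sign}(\Psi)$ is the product of the sign of $\det(\Phi)$ on $\Det[\det(N)]$ and of $\relphi$ on $\Det[\TCC^{\oplus\rang(N)-1}]$. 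For $\operatorname{sign}(f)$ I would invoke the rank reduction of \cite{article1} for identity-lifting automorphisms, which gives $\operatorname{sign}(f)=\varepsilon(f_{\PPP^+})\cdot\operatorname{sign}(\det(f)\text{ on }\Det[\det(N)])=\varepsilon(f_{\PPP^+})$, using $\det(f)=\id$.

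It then remains to identify $\varepsilon(f_{\PPP^+})$ with $\varepsilon(\Phi_{\PPP^+})$, and this is where the crux lies. Since $\Phi\mapsto\Phi_{\PPP^+}$ is a homomorphism and $\Phi=f\circ\Psi$, the signature is multiplicative, $\varepsilon(\Phi_{\PPP^+})=\varepsilon(f_{\PPP^+})\,\varepsilon(\Psi_{\PPP^+})$, so it suffices to prove $\varepsilon(\Psi_{\PPP^+})=1$. Restricted to each component of $\RR\surg$, the automorphism $\Psi$ has exactly the form (line-bundle isomorphism)$\oplus$(orientation-preserving automorphisms of trivial factors) considered in Lemma \ref{pintriv}; hence for every cycle $c_i$ of the permutation induced by $\varphi$ the map $\Psi^{l(c_i)}$ preserves the $Pin^+$ structure associated to the underlying real line bundle, that is $s_{\PPP^+}(c_i)=1$, and Lemma \ref{pinpmsign} yields $\varepsilon(\Psi_{\PPP^+})=\prod_i s_{\PPP^+}(c_i)=1$. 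Combining the three computations gives the asserted factorisation.

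The main obstacle is precisely this last step. Apart from the technical prerequisite of the equivariant splitting, the delicate point is that the trivial summand $\relphi$ must act by \emph{orientation-preserving} maps on the real trivial factors in order for Lemma \ref{pintriv} to apply; had we worked with $Pin^-$ structures the analogous statement would fail, which is exactly why $\PPP^+$ and not $\PPP^-$ appears in the final formula.
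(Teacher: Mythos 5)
Votre démonstration suit essentiellement la même stratégie que celle de l'article : scinder $(N,c_N)$ en $(\det(N),c_{\det(N)})\oplus(\TCC^{\oplus\rang(N)-1},\conj)$, factoriser $\Phi$ en le relevé bloc-diagonal $\det(\Phi)\oplus\relphi$ composé avec un automorphisme au-dessus de l'identité de déterminant trivial, puis invoquer le Lemme \ref{pintriv} et les résultats de \cite{article1} pour ce dernier facteur. La seule différence est cosmétique (ordre de composition dans la factorisation) ; veillez simplement à traiter aussi le cas $\RR\surg=\emptyset$, où l'argument $Pin^+$ ne s'applique pas et où l'on conclut plutôt que le facteur au-dessus de l'identité est homotope à l'identité.
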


Dans le cas d'une courbe dont la partie réelle est vide, non fixons la convention que $\varepsilon(\Phi_{\PPP^+}) = 1$.

\begin{proof}
Fixons un isomorphisme $f : (N,c_N)\rightarrow (\det(N)\oplus \TCC^{\oplus \rang(N)-1},c_{\det(N)} \oplus conj)$ dont le déterminant vaut $1$. L'automorphisme $\Phi$ a même action sur les orientations de $\Det$ que $f\circ \Phi \circ f^{-1}$ sur les orientations de $\Det[\det(N)\oplus \TCC^{\oplus \rang(N)-1}]$. \'Ecrivons maintenant
\[
f\circ \Phi \circ f^{-1} = (\det(\Phi)\oplus \relphi)\circ \left((\det(\Phi)\oplus \relphi)^{-1}\circ f\circ \Phi \circ f^{-1}\right).
\]
L'automorphisme $g = (\det(\Phi)\oplus \relphi)^{-1}\circ f\circ \Phi \circ f^{-1}$ est au-dessus de l'identité et, d'après le Lemme \ref{pintriv}, lorsque $\RR\surg\neq \emptyset$, on a $\varepsilon(g_{\PPP^+}) = \varepsilon(\Phi_{\PPP^+})$. De plus, $\det(g) =\det(\Phi)^{-1}\circ \det(f)\circ \det(\Phi)\circ \det(f)^{-1} = \det(\Phi)^{-1}\circ \det(\Phi) = id_{\det(N)}$, donc son action sur les orientations de $\Det[\det(N)]$ est triviale.

Lorsque $\RR\surg\neq \emptyset$, le Théorème 3.1 de \cite{article1} implique que le signe de l'action de $g$ sur les orientations de $\Det[\det(N)\oplus \TCC^{\oplus \rang(N)-1}]$ est donné par $\varepsilon(\Phi_{\PPP^+})$. 

Lorsque $\RR\surg = \emptyset$, d'après le Lemme 2.2 de \cite{article1}, $g$ est homotope à l'identité.

Ainsi, dans tous les cas, le signe de l'action de $\Phi$ sur les orientations de $\Det$ est donné par le produit de $\varepsilon(\Phi_{\PPP^+})$ par le signe de l'action de $(\det(\Phi)\oplus \relphi)$ sur les orientations de $\Det[\det(N)\oplus \TCC^{\oplus \rang(N)-1}]$. Ce dernier est donné par le produit du signe de l'action de $\det(\Phi)$ sur les orientations de $\Det[\det(N)]$ et du signe de l'action de $\relphi$ sur les orientations de $\Det[\TCC^{\oplus \rang(N)-1}]$.
\end{proof}

L'action de $\relphi$ sur les orientations de $\Det[\TCC^n]$ est étudiée dans le \S \ref{extrivial}, et le cas du rang $1$ est traité au \S \ref{caspart}.

\subsection[Action de $\Gamma(\surg,c_\sur)$ sur l'espace de Teichmüller réel]{Action du groupe des difféotopies réel sur l'espace de Teichmüller réel}\label{diffeoteich}

\subsubsection{Les cas des fibrés trivial et canonique}\label{extrivial}

Soit $(\surg,c_\sur)$ une courbe réelle et considérons tout d'abord le fibré vectoriel complexe trivial de rang $n\geq 1$, $\TCC^{\oplus n}$ muni de la structure réelle naturelle notée $conj$. Notons $s_{\TCC^{\oplus n}} : \raut{\TCC^{\oplus n}}\rightarrow \{-1,1\}$ le morphisme calculant l'action du déterminant d'un élément de $\raut{\TCC^{\oplus n}}$ sur les orientations du fibré $\Det[\TCC]$. Nous l'avons déjà étudié au \S 4 de \cite{article1} (voir notamment les Théorèmes 4.2 et 4.3). Pour tout difféomorphisme réel $\varphi$ de $(\surg,c_\sur)$, le déterminant de l'automorphisme $\relphi$ nous fournit une section réelle du fibré $(\varphi^*(\TCC,conj))\otimes (\TCC,conj)^*$ qui devient donc isomorphe à $(\TCC,conj)$. Ainsi, on peut étendre $s_{\TCC^n}$ à tout $\riso{\TCC^n}$.

D'autre part, un difféomorphisme réel $\varphi$ de $(\surg,c_\sur)$ induit un automorphisme $\varphi^*$ de $H^1(\surg,\RR)_{-1} = \{\alpha \in H^1(\surg,\RR)\ |\ (c_{\sur})^*\alpha = -\alpha\}$.

\begin{Proposition}\label{isotriv}
  Le signe de l'action d'un élément $(\Phi,\varphi)$ de $\riso{\TCC^{\oplus n}}$ sur les orientations du fibré déterminant $\Det[\TCC^{\oplus n}]$ est donné par 
\[
\label{signtrivial}
\varepsilon(\Phi_{\PPP^{\pm}}) s_{\TCC^{\oplus n}}(\Phi)\det(\varphi^*)^n\in\{-1,1\}.
\tag{$\varepsilon$}
\]
\end{Proposition}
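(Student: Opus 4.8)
Je propose de factoriser $(\Phi,\varphi)$ à travers son relevé canonique $\relphi$, afin de ne plus avoir que deux ingrédients à traiter : le cas des automorphismes relevant l'identité, déjà connu par \cite{article1}, et un calcul cohomologique direct de l'action de $\relphi$ seul. Posons $g = \Phi\circ\relphi^{-1}$. Comme $\Phi$ et $\relphi$ relèvent tous deux le même difféomorphisme $\varphi$, l'automorphisme $g$ relève l'identité de $\surg$, donc $g\in\raut{\TCC^{\oplus n}}$ et $\Phi = g\circ\relphi$. Le signe de l'action sur les orientations de $\Det[\TCC^{\oplus n}]$, que je note $\mathrm{sgn}(\cdot)$, définit un morphisme de $\riso{\TCC^{\oplus n}}$ dans $\{-1,1\}$ ; il est donc multiplicatif et l'on a $\mathrm{sgn}(\Phi) = \mathrm{sgn}(g)\,\mathrm{sgn}(\relphi)$. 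Je remarque d'emblée que la partie réelle $\RR\TCC^{\oplus n} = \TRR^{\oplus n}$ est orientable au-dessus de chaque composante de $\RR\surg$, de sorte que, lorsque $\RR\surg\neq\emptyset$, la Proposition \ref{pinori} donne $\varepsilon(\Phi_{\PPP^+}) = \varepsilon(\Phi_{\PPP^-})$ (la permutation $\sigma^-_\varphi$ porte sur l'ensemble vide), ce qui légitime l'écriture $\Phi_{\PPP^\pm}$. Enfin, par la définition même du prolongement de $s_{\TCC^{\oplus n}}$ à $\riso{\TCC^{\oplus n}}$, qui consiste à diviser $\det(\Phi)$ par la section de référence fournie par $\det(\relphi)$, on a $s_{\TCC^{\oplus n}}(\Phi) = s_{\TCC^{\oplus n}}(g)$.

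Pour le terme $\mathrm{sgn}(g)$, j'appliquerais directement la décomposition obtenue dans \cite{article1} : comme $g$ relève l'identité, le Théorème 3.1 de \cite{article1} et l'étude du morphisme $s_{\TCC^{\oplus n}}$ (Théorèmes 4.2 et 4.3 de \cite{article1}) donnent $\mathrm{sgn}(g) = \varepsilon(g_{\PPP^+})\,s_{\TCC^{\oplus n}}(g)$, le premier facteur étant l'action sur les structures $Pin^+$ et le second l'action de $\det(g)\in\raut{\TCC}$ sur $\Det[\TCC]$. Il reste à comparer $\varepsilon(\Phi_{\PPP^+})$ et $\varepsilon(g_{\PPP^+})$. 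Comme $\Phi\mapsto\Phi_{\PPP^+}$ est un tiré en arrière, la signature est multiplicative et $\varepsilon(\Phi_{\PPP^+}) = \varepsilon(g_{\PPP^+})\,\varepsilon(\relphi_{\PPP^+})$ ; je vérifierais alors que $\varepsilon(\relphi_{\PPP^+}) = 1$ à l'aide du Lemme \ref{pinpmsign}. En effet, pour chaque cycle $c_i$ de la permutation des composantes de $\RR\surg$ induite par $\varphi$, l'application retour $\relphi^{l(c_i)}$ relève $\varphi^{l(c_i)}$ par l'identité dans la fibre triviale ; que $\varphi^{l(c_i)}$ préserve ou renverse l'orientation du cercle, le calcul sur le modèle effectué dans la preuve de la Proposition \ref{pinori} (le cas $e_1^\pm e_1^\pm = 1$) montre qu'un tel relevé préserve les structures $Pin^+$ d'un fibré en droites orientable. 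Ainsi $s_{\PPP^+}(c_i) = 1$ pour tout $i$, d'où $\varepsilon(\relphi_{\PPP^+}) = 1$ et $\varepsilon(\Phi_{\PPP^+}) = \varepsilon(g_{\PPP^+})$.

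L'étape principale, et l'obstacle le plus sérieux, est le calcul direct $\mathrm{sgn}(\relphi) = \det(\varphi^*)^n$. Pour les opérateurs de Cauchy-Riemann réels sur $(\TCC^{\oplus n},conj)$, les dimensions réelles de $H^0_{\DDJ}(\surg,\TCC^{\oplus n})_{+1}$ et $H^1_{\DDJ}(\surg,\TCC^{\oplus n})_{+1}$ valent respectivement $n$ et $ng$ (sections constantes réelles et $H^1(\surg,\OO)^{\oplus n}$) et sont donc localement constantes ; par conséquent, au-dessus des opérateurs de Dolbeault standard, $\Det[\TCC^{\oplus n}]$ est le fibré en droites honnête $\Lambda^{\max}H^0_{+1}\otimes(\Lambda^{\max}H^1_{+1})^*$ sur la base contractile $\RR J(\surg)$, sans qu'aucune stabilisation ne soit nécessaire, et les orientations s'y transportent canoniquement. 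Le signe de $\relphi$, qui recouvre $J\mapsto\varphi^*J$, est alors le signe du déterminant de son action fibre à fibre (seul le signe importe, et $\det(\varphi^*)$ a le même que son inverse). Sur $H^0_{\DDJ}(\surg,\TCC^{\oplus n})_{+1}$, l'espace des sections constantes réelles, $\relphi$ agit par l'identité. Sur $H^1_{\DDJ}(\surg,\TCC^{\oplus n})_{+1} = (H^1(\surg,\OO)_{+1})^{\oplus n}$, l'isomorphisme $\RR$-linéaire naturel $H^1(\surg,\OO)_{+1}\cong H^1(\surg,\RR)_{-1}$ — obtenu en projetant une classe réelle sur sa composante de type $(0,1)$, et équivariant pour $c_\sur$ comme pour $\varphi$ — identifie l'action de $\relphi$ à celle de $n$ copies de $\varphi^*$ sur $H^1(\surg,\RR)_{-1}$. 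Le déterminant fibre à fibre vaut donc $\det(\varphi^*)^n$ sur $H^1$ et $1$ sur $H^0$, d'où $\mathrm{sgn}(\relphi) = \det(\varphi^*)^n$.

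En rassemblant, on obtient $\mathrm{sgn}(\Phi) = \varepsilon(g_{\PPP^+})\,s_{\TCC^{\oplus n}}(g)\,\det(\varphi^*)^n = \varepsilon(\Phi_{\PPP^\pm})\,s_{\TCC^{\oplus n}}(\Phi)\,\det(\varphi^*)^n$, qui est le signe annoncé. Le cas $\RR\surg = \emptyset$ se traite de la même façon avec la convention $\varepsilon(\Phi_{\PPP^\pm}) = 1$ : le calcul cohomologique de $\mathrm{sgn}(\relphi)$ est inchangé et \cite{article1} donne alors $\mathrm{sgn}(g) = s_{\TCC^{\oplus n}}(g)$. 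La véritable difficulté technique se concentre dans le troisième paragraphe : justifier que l'isomorphisme de Dolbeault $H^1(\surg,\OO)_{+1}\cong H^1(\surg,\RR)_{-1}$ entrelace bien l'action de $\relphi$ — qui modifie la structure complexe sous-jacente en envoyant $\DDJ$ sur l'opérateur relatif à $\varphi^*J$ — avec celle de $\varphi^*$ sur la cohomologie topologique, indépendante de $J$.
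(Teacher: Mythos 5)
Your proof is correct and follows the same skeleton as the paper's: factor $\Phi = g\circ\relphi$ with $g\in\raut{\TCC^{\oplus n}}$, dispatch $g$ with the results of \cite{article1}, and compute the sign of $\relphi$ separately on the section $J\mapsto(\DDJ^{\oplus n},J)$ of $\rop[\TCC^{\oplus n}]\to\RR J(\surg)$. The one genuine difference is in that last computation, which is also the heart of the matter. The paper proves Lemme \ref{isopart} by composing the evaluation isomorphism on $H^0$, Serre duality $\left(\HH^1(\surg,\TCC)_{+1}\right)^*\cong\HH^0(\surg,K_\sur)_{-1}$, and the period pairing with $\HH_1(\surg,\RR)_{+1}$; you instead use the Dolbeault projection $H^1(\surg,\RR)\to H^1(\surg,\OO_J)$, which is the dual identification and somewhat more direct. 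It is legitimate: the projection intertwines $\varphi^*$ on topological cohomology with the pullback between the fibres over $J$ and $\varphi^*J$ because $\varphi:(\surg,\varphi^*J)\to(\surg,J)$ is holomorphic, and it intertwines the real structures. The only caveat is one of sign conventions: the Dolbeault projection carries $H^1(\surg,\RR)_{-1}$ onto the $(-1)$-eigenspace of the antilinear involution induced on $H^1(\surg,\OO_J)$, so landing in the $(+1)$-eigenspace requires composing with multiplication by $i$; this is harmless for your purpose, since $\det(\varphi^*)=1$ on all of $H^1(\surg,\RR)$ and hence $\det(\varphi^*)$ has the same sign on both eigenspaces. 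The remaining deviations are cosmetic: you obtain $\varepsilon((\relphi)_{\PPP^+})=1$ from Lemme \ref{pinpmsign} together with the model computation in the proof of Proposition \ref{pinori}, where the paper invokes Lemme \ref{pintriv}; the two arguments are interchangeable.
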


La Proposition \ref{isotriv} correspond au cas particulier du fibré trivial et du diviseur nul dans le Théorème \ref{totalaction}. Notons que les deux premiers termes $\varepsilon(\Phi_{\PPP^{\pm}})$ et $s_{\TCC^{\oplus n}}(\Phi)$ sont analogues à ceux obtenus aux Théorèmes A et B de \cite{article1}, et ne font pas intervenir le difféomorphisme $\varphi$, au contraire du troisième terme $\det(\varphi^*)^n$.

Commençons par quelques remarques. La projection $(\DB,J)\in\rop[\TCC^{\oplus n}]\mapsto J\in\RR J(\surg)$ admet une section naturelle $J\in\RR J(\surg)\mapsto (\DDJ^{\oplus n},J)\in\rop[\TCC^{\oplus n}]$, où $\DDJ = \dd\frac{1}{2} (\d + i\circ\d\circ J)$. Le fibré déterminant est donc défini sur $\RR J(\surg)$, et est encore trivialisable sur cet espace qui est contractile. De plus, il s'écrit
\[
\Det[\TCC^{\oplus n}]_{|\RR J(\surg)} = \left(\HH^0(\surg,\TCC)_{+1}\otimes\left(\Lambda^g\HH^1(\surg,\TCC)_{+1}\right)^*\right)^{\otimes n}
\]
où $\HH^0(\surg,\TCC)_{+1}$ et $\HH^1(\surg,\TCC)_{+1}$ sont les fibrés vectoriels réels sur $\RR J(\surg)$ de fibres respectives $H_{\DDJ}^0(\surg,\TCC)_{+1}$ et $H_{\DDJ}^1(\surg,\TCC)_{+1}$ au-dessus de $J$. D'autre part, comme $(\relphi)^*(\DDJ,J) = (\DB_{\varphi^* J},\varphi^* J)$, l'action de l'automorphisme $\relphi$ sur les orientations du fibré $\Det[\TCC^{\oplus n}]$ est donnée par son action sur celles de $\Det[\TCC^{\oplus n}]_{|\RR J(\surg)}$. Nous pouvons maintenant énoncer le résultat intermédiaire suivant.

\begin{Lemme}\label{isopart}
 Nous avons un isomorphisme canonique
\[
\Det[\TCC^{\oplus n}]_{|\RR J(\surg)} \cong \det(\HH^1(\surg,\RR)_{-1})^{\otimes n},
\]
où $\HH^1(\surg,\RR)_{-1}$ est le fibré trivial au-dessus de $\RR J(\surg)$ de fibre $H^1(\surg,\RR)_{-1}$, tel que pour tout $\varphi\in\RR Diff^+(\surg)$, le diagramme
\[
\label{diagtriv}
\xymatrix{
 \Det[\TCC^{\oplus n}]_{|\RR J(\surg)} \ar[d]_{(\relphi)_*} \ar[r] & \det(\HH^1(\surg,\RR)_{-1})^{\otimes n} \ar[d]^{(\varphi^{-1})^*}\\
  \Det[\TCC^{\oplus n}]_{|\RR J(\surg)} \ar[r] & \det(\HH^1(\surg,\RR)_{-1})^{\otimes n}
}
\tag{$*$}
\]
commute.
\end{Lemme}

\begin{proof}
D'une part, l'évaluation des fonctions $J$-holomorphes réelles sur $(\surg,c_\sur)$ donne un isomorphisme canonique
\[
\HH^0(\surg,\TCC)_{+1}\xrightarrow{ev} \underline{\RR}
\]
qui nous permet de dire que $\relphi$ agit trivialement sur la partie $\HH^0(\surg,\TCC)_{+1}$.
D'autre part, nous avons par dualité de Serre 
\[
\left(\HH^1(\surg,\TCC)_{+1}\right)^*\cong \HH^0(\surg,K_\sur)_{-1},
\]
où $\HH^0(\surg,K_\sur)_{-1}$ est le fibré sur $\RR J(\surg)$ de fibre $H^0(\surg,K_{\sur,J})_{-1}$ au-dessus de $J$, et le diagramme suivant commute
\[
\xymatrix{
\left(\HH^1(\surg,\TCC)_{+1}\right)^* \ar[d]_{^t(\relphi^*)} \ar[r]^{Serre} & \HH^0(\surg,K_\sur)_{-1} \ar[d]^{(\d \varphi^{-1})^*} \\
\left(\HH^1(\surg,\TCC)_{+1}\right)^* \ar[r]_{Serre} & \HH^0(\surg,K_\sur)_{-1}.
}
\]
 Enfin, l'intégration des $(1,0)$-formes $J$-holomorphes le long des lacets sur $\surg$ définit une dualité
\[
\HH^0(\surg,K_\sur)_{-1}\times \HH_1(\surg,\RR)_{+1} \rightarrow i\RR,
\]
où $\HH_1(\surg,\RR)_{+1}$ est le fibré trivial sur $\RR J(\surg)$ de fibre $H_1(\surg,\RR)_{+1}$, et nous avons à nouveau un diagramme 
\[
\xymatrix{
\HH^0(\surg,K_\sur)_{-1} \ar[d]_{(\d \varphi^{-1})^*}  \ar[r] & \left(\HH_1(\surg,\RR)_{+1}\right)^* \ar[d]^{ (\varphi^{-1})^*}\\
\HH^0(\surg,K_\sur)_{-1}       \ar[r] & \left(\HH_1(\surg,\RR)_{+1}\right)^* 
}
\]
qui commute grâce à la formule du changement de variables pour $\omega\in
  H^0_{\bar{\partial}_{K_{\sur},J}}(\surg,K_{\sur})$ et $c\in
  H_1(\surg,\ZZ)$
  \[
  \dd\int_{c} \omega\circ \d \varphi^{-1} = \dd\int_{\varphi^{-1}_* c}\omega.
  \]
Tout compte fait, nous avons un isomorphisme
\[
\Det[\TCC^{\oplus n}]_{|\RR J(\surg)} \cong \det(\HH^1(\surg,\RR)_{-1})^{\otimes n}
\]
et le diagramme (\ref{diagtriv}) commute bien.
\end{proof}

\begin{proof}[Démonstration de la Proposition \ref{isotriv}]
  Vérifions le résulat tout d'abord pour un automorphisme de la forme $\relphi$ avec $\varphi\in\RR Diff^+(\surg)$. Par construction, nous avons $s_{\TCC^{\oplus n}}(\relphi)=1$, et d'après le Lemme \ref{pintriv} $\varepsilon((\relphi)_{\PPP^\pm})=1$. Le signe topologique (\ref{signtrivial}) est donc simplement égal à $\det(\varphi^*)^n$.

D'autre part, le Lemme \ref{isopart} nous montre que l'action de $\relphi$ sur les orientations du fibré $\Det[\TCC^{\oplus n}]_{|\RR J(\surg)}$ est la même que celle de $(\varphi^{-1})^*$ sur $\det(H^1(\surg,\RR)_{-1})^{\otimes n}$. On retrouve ainsi le signe (\ref{signtrivial}) pour $\relphi$.

Pour un automorphisme $(\Phi,\varphi)\in\riso{\TCC^{\oplus n}}$ quelconque, on commence par le décomposer en $f\circ \relphi$, où $f\in\raut{\TCC^{\oplus n}}$. Or, nous avons déjà calculé le signe de l'action de $\relphi$, et le signe de celle de $f$ est donné par $\varepsilon(f_{\PPP^\pm}) s_{\TCC^{\oplus n}}(f)$.
Le produit des deux signes nous donne le résultat voulu.
\end{proof}

\begin{Rem}\label{remrangun}
  Le Lemme \ref{isopart} nous permet de préciser le résultat obtenu dans la Proposition \ref{lemrangun}. Nous obtenons en effet que le signe de l'action d'un automorphisme $\Phi\in\riso{N}$ est le produit de $\varepsilon(\Phi_{\PPP^+})$ par $\det(\varphi^*)^{\rang(N)-1}$ et par le signe de l'action de $\det(\Phi)$ sur les orientations de $\Det[\det(N)]$. Il reste encore à décrire cette dernière action, ce que nous faisons au \S \ref{caspart}.
\end{Rem}

\begin{Lemme}\label{isoksigma}
Il existe un isomorphisme canonique
\[
\Det[K_\sur]_{|\RR J(\surg)} \cong \det(\HH^1(\surg,\RR)_{-1}),
\]
où $\Det[K_\sur]_{|\RR J(\surg)}$ est le fibré sur $\RR J(\surg)$ de fibre $\Lambda^g H^0(\surg,K_{\sur,J})_{+1} \otimes \left(H^1(\surg,K_{\sur,J})_{+1}\right)^*$ au-dessus de $J$, tel que pour tout $\varphi\in\RR Diff^+(\surg)$, le diagramme
\[
\xymatrix{
  \Det[K_\sur]_{|\RR J(\surg)} \ar[d]_{(\d \varphi^{-1})^*} \ar[r] & \det(\HH^1(\surg,\RR)_{-1}) \ar[d]^{(\varphi^{-1})^*}\\
  \Det[K_\sur]_{|\RR J(\surg)} \ar[r] & \det(\HH^1(\surg,\RR)_{-1})
}
\]
commute.
\end{Lemme}

\begin{proof}
  On procède comme dans le Lemme \ref{isopart}, sans avoir besoin de la dualité de Serre.
\end{proof}

\subsubsection{Action des difféomorphismes sur $\Det[T\surg]$}\label{extang}

Le Lemme \ref{isoksigma} permet aussi d'étudier le cas du fibré tangent à la courbe réelle. Rappelons pour cela qu'étant donné un opérateur de Cauchy-Riemann réel $\DB$ sur un fibré vectoriel complexe $(N,c_N)$, on a un opérateur $\DB^*$ sur $(N^*,c_{N^*})$ défini par
\[\label{dbetoile}
(\DB^*\alpha)(v) = \DDJ(\alpha(v)) - \alpha(\DB v),
\tag{$*$}
\]
pour $\alpha\in \Gamma(\surg,N^*)$ et $v\in\Gamma(\surg,N)$. 

\begin{Lemme}
  L'opérateur $\DB^*$ est bien défini et est un élément de $\rop[N^*]$. Par exemple, pour les fibrés $T\sur_{g,J}$ et $K_{\sur,J}$, on a
\[
\begin{array}{c}
\DB_{T\sur,J} = \frac{1}{2} (\nabla^J + J\circ\nabla^J\circ J),\\
\DB_{K_\sur,J} = \frac{1}{2} (d + i\circ \d \circ J),\\
\DB_{T\sur,J}^* = \DB_{K_\sur,J},  
\end{array}
\]
où $\nabla^J$ est la connexion de Levi-Civita associée à une métrique induite par $J$.
\end{Lemme}

\begin{proof}
  Fixons $\alpha\in\Gamma(\surg,N^*)$. Il nous faut vérifier que $\DB^*\alpha$ tel que donné par l'égalité (\ref{dbetoile}) ne dépend que de la valeur en chaque point de la section $v\in\Gamma(\surg,N)$. Prenons pour cela une fonction $f\in\CCC^\infty(\surg,\CC)$. Nous avons alors
\[
\begin{array}{l l}
  \DDJ(\alpha(fv)) - \alpha(\DB (fv)) & = \alpha(v)\DDJ(f) + f\DDJ(\alpha(v)) - \alpha(\DDJ(f)v) - f\alpha(\DB v)\\
                                   & = f(\DDJ(\alpha(v)) - \alpha(\DB v)),
\end{array}
\]
ce qui montre que $\DB^*\alpha$ est bien défini. La formule de Leibniz se montre de la même façon.

 D'autre part, une métrique associée à $J$ s'écrit $\lambda(z)\d z\otimes\d \overline{z}$ dans des coordonnées locales holomorphes. On vérifie alors que $\nabla^J_{\frac{\partial}{\partial \overline{z}}}\frac{\partial}{\partial z} = \frac{1}{\lambda}\frac{\partial \lambda}{\partial z}\frac{\partial}{\partial z}$ et $\nabla^J_{i\frac{\partial}{\partial \overline{z}}}\frac{\partial}{\partial z} = \frac{i}{\lambda}\frac{\partial \lambda}{\partial z}\frac{\partial}{\partial z}$. Ainsi, $\nabla^J\frac{\partial}{\partial z} + i\circ\nabla\frac{\partial}{\partial z}\circ i = 0$. Donc la structure holomorphe sur $T \sur_{g,J}$ est induite par l'opérateur $\DB_{T_\sur,J} = \dd\frac{1}{2}(\nabla^J + J\circ\nabla^J\circ J)$.%calcul avec christoffel

Enfin, toujours dans des coordonnées locales holomorphes, on prend $\omega = w\d z$ et $\xi = s\frac{\partial}{\partial z}$ et on calcule
\[
\begin{array}{l l}
  \DDJ(\omega(\xi))  & = \DDJ(ws)\\
                     & = w\DDJ(s) + \DDJ(w)s\\
                     & = \omega(\DB_{T_\sur,J} \xi) + (\DB_{K_\sur,J}\omega)(\xi),
\end{array}
\]
ce qui montre que $\DB_{T_\sur,J}^* = \DB_{K_\sur,J}$.
\end{proof}

L'application de $\rop$ dans $\rop[N^*]$ ainsi définie est de plus continue, donc est un homéomorphisme. On peut ainsi comparer les fibrés $\Det$ et $\Det[N^*]$, ce qui est l'objet de la Proposition \ref{detdual}.

\begin{Proposition}\label{detdual}
  Soit $(N,c_N)$ un fibré vectoriel complexe muni d'une structure réelle sur $(\surg,c_\sur)$. Supposons que $\RR N$ est orientable. On a un isomorphisme canonique $\Det = \Det[N^*]^*$, où $\Det[N^*]^*$ est défini au-dessus de $\rop$ grâce à la correspondance $\DB\mapsto\DB^*$.

 En particulier, si $(\Phi,\varphi)$ est un automorphisme de $(N,c_N)$ alors son action sur les orientations du fibré $\Det$ est la même que celle de $(^t\Phi^{-1},\varphi)$ sur les orientations de $\Det[N^*]$. 
\end{Proposition}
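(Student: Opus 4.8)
On veut établir un isomorphisme canonique entre le fibré déterminant $\Det$ d'un opérateur $\DB$ et le dual du fibré déterminant $\Det[N^*]$ de l'opérateur adjoint $\DB^*$, lorsque $\RR N$ est orientable. Le point essentiel est que la dualité de Serre fournit des identifications naturelles entre les noyaux et conoyaux de $\DB$ et ceux de $\DB^*$.

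The plan is to produce the isomorphism fiberwise and naturally in $\DB \in \rop$, so that it automatically defines a bundle isomorphism over $\rop$ (recall that $\DB \mapsto \DB^*$ is a homeomorphism $\rop \to \rop[N^*]$). The starting point is the defining identity of $\DB^*$, namely $\DDJ(\alpha(v)) = (\DB^*\alpha)(v) + \alpha(\DB v)$, which says exactly that the evaluation $N \otimes N^* \to \TCC$ promotes to a pairing of the Dolbeault complex of $(N,\DB)$ with that of $(N^*,\DB^*)$, valued in the Dolbeault complex of the trivial bundle. I would package this by forming the single operator $\DB \oplus \DB^* \in \rop[N \oplus N^*]$ and using multiplicativity of the determinant under direct sums, $\ddet(\DB \oplus \DB^*) = \ddet(\DB) \otimes \ddet(\DB^*)$; the statement $\Det \cong \Det[N^*]^*$ is then equivalent to a canonical orientation of this line.

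The structural input is that $N \oplus N^*$ carries the canonical hyperbolic pairing $((v,\alpha),(w,\beta)) \mapsto \alpha(w) - \beta(v)$, which identifies $(N \oplus N^*)^*$ with $N \oplus N^*$; under it the dual operator $(\DB \oplus \DB^*)^*$ is carried back to $\DB \oplus \DB^*$, so $\DB \oplus \DB^*$ is formally self-dual. Its real part is $\RR N \oplus (\RR N)^*$, which is \emph{canonically oriented} and carries a canonical $Pin^+$-structure, both coming from its hyperbolic quadratic form. Through the correspondence between orientations of the determinant and the $Pin^+$-structures of the real locus established in \cite{article1} and recalled in \S\ref{parpin}, this canonical $Pin^+$-structure singles out a canonical ray in $\ddet(\DB \oplus \DB^*)$. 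To match the two descriptions on the $+1$-eigenspaces I would invoke Serre duality in the very form already used in Lemmas \ref{isopart} and \ref{isoksigma}, namely $(H^1_{\DDJ}(\surg,\TCC)_{+1})^* \cong H^0(\surg,K_\sur)_{-1}$, which is what converts the cokernel contributions into kernel ones.

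The hypothesis that $\RR N$ is orientable enters twice. First it makes $\RR(N^*) = (\RR N)^*$ isomorphic to $\RR N$ as \emph{oriented} real bundles, so that the self-dual picture is symmetric in $N$ and $N^*$. Second, and more importantly, orientability means $\RR\surg$ has no component over which $\RR N$ is non-orientable, so by Proposition \ref{pinori} the $Pin^+$ and $Pin^-$ contributions agree and the canonical ray above is unambiguous. Granting the reality of the pairing (that it intertwines $c_N$ and $c_{N^*}$, hence restricts to the $+1$-eigenspaces), one obtains the canonical isomorphism $\Det \cong \Det[N^*]^*$; its naturality in $\DB$ follows from the naturality of the evaluation pairing and of Serre duality, which yields the bundle statement.

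For the final assertion, note that an automorphism $(\Phi,\varphi) \in \riso{N}$ lifts to $N^*$ as the inverse transpose $({}^t\Phi^{-1},\varphi) \in \riso{N^*}$, this being the unique lift preserving the evaluation pairing; hence $\DB \mapsto \DB^*$ intertwines the $(\Phi,\varphi)$-action on $\rop$ with the $({}^t\Phi^{-1},\varphi)$-action on $\rop[N^*]$. Transporting through $\Det \cong \Det[N^*]^*$ and using that the dual action on a one-dimensional real space has the same sign as the original, I conclude that $\Phi$ acts on the orientations of $\Det$ exactly as ${}^t\Phi^{-1}$ acts on those of $\Det[N^*]$. I expect the main obstacle to be the first reduction made precise: checking that the pairing built from the evaluation map and Serre duality is genuinely real and nondegenerate on the $+1$-eigenspaces, and that the resulting orientation is independent of the auxiliary identifications --- this is exactly the point at which orientability of $\RR N$ and the $Pin^\pm$ bookkeeping of \S\ref{parpin} are indispensable.
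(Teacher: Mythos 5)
Your opening reduction coincides with the paper's: both of you trade $\Det\cong\Det[N^*]^*$ for a canonical trivialisation of $\Det[N\oplus N^*]$ via the operator $\DB\oplus\DB^*$ (the paper phrases this through the exact sequence $0\to N\to N\oplus N^*\to N^*\to 0$), and your final paragraph on the action of $({}^t\Phi^{-1},\varphi)$ is correct. The gap is in how you trivialise $\Det[N\oplus N^*]$. You claim that the canonical $Pin^+$-structure on $\RR N\oplus\RR N^*$ \og singles out a canonical ray in $\ddet(\DB\oplus\DB^*)$ \fg{} via \og the correspondence between orientations of the determinant and the $Pin^+$-structures \dots recalled in \S\ref{parpin} \fg. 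No such correspondence exists there: \S\ref{parpin} and the Proposition \ref{lemrangun} only decompose the \emph{sign of the action of an automorphism} on the orientations of $\Det$ into a $Pin^+$ factor and other terms; they do not produce an orientation of $\Det$ from a $Pin^+$-structure. That this cannot work is precisely the content of the Théorème \ref{totalaction} (which, besides, comes after this Proposition and cannot be invoked here): in addition to $\PV$ one still needs the factors $\DD_D(N)$, $\det(\RR\jet_D)$ and $\det(H^1(\surg,\RR)_{-1})^{\otimes\rang(N)}$ to orient $\Det$. Your appeal to the Proposition \ref{pinori} is likewise off target: it compares the $Pin^+$ and $Pin^-$ \emph{permutations induced by an automorphism}, not two candidate orientations of a determinant line.

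The missing ingredients are supplied in the paper as follows. From any Spin structure on a component $(\RR N)_i$ --- this is where the orientability of $\RR N$ actually enters --- one builds a Spin structure on $(\RR N)_i\oplus(\RR N^*)_i$ that does not depend on the choice; one then picks an isomorphism $f\colon N\oplus N^*\to\TCC^{\oplus 2\rang(N)}$ of determinant $1$ carrying these Spin structures to the standard ones. The key rigidity statement is that such an $f$ is unique up to homotopy (Lemme 2.2 de \cite{article1}), after which the Proposition \ref{isotriv} (that is, the Lemme \ref{isopart}, where Serre duality really lives) identifies $\Det[\TCC^{\oplus 2\rang(N)}]_{|\RR J(\surg)}$ with $\det(\HH^1(\surg,\RR)_{-1})^{\otimes 2\rang(N)}$, canonically trivial because the exponent is even. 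Your sketch gestures at Serre duality and at the hyperbolic structure of $N\oplus N^*$, but without the uniqueness of $f$ up to homotopy and without the even-power trivialisation, the \og canonical ray \fg{} is never actually produced.
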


\begin{proof}
Considérons la suite exacte
  \[
0\rightarrow N\rightarrow N\oplus N^* \rightarrow N^*\rightarrow 0.
\]
Grâce à la correspondance $\DB\in\rop\mapsto\DB^*\in\rop[N^*]$, on a un isomorphisme canonique
\[
\Det = \Det[N\oplus N^*]\otimes\Det[N^*]^*,
\]
où la fibre de $\Det[N\oplus N^*]$ au dessus de $\DB\in\rop$ est $\ddet(\DB\oplus\DB^*)$.

De plus, nous avons des structures $Spin$ naturelles sur chaque composante de $\RR N\oplus \RR N^*$ données de la façon suivante. Fixons une structure $Spin$ sur une composante $(\RR N)_i$. Celle-ci induit une décomposition de $(\RR N)_i$ en somme directe de fibrés en droites réelles orientables définie à homotopie près. Par dualité, on obtient alors, à homotopie près, une décomposition en somme directe de fibrés en droites réelles orientables de $(\RR N)_i\oplus (\RR N^*)_i$ ainsi qu'une orientation canonique sur ce même fibré. Nous avons donc une structure $Spin$ sur $(\RR N)_i\oplus (\RR N^*)_i$. De plus, cette structure ne dépend pas du choix de structure fait sur $(\RR N)_i$.

Prenons alors un isomorphisme $f:N\oplus N^*\rightarrow \TCC^{\oplus 2\rang(N)}$ tel que
\begin{itemize}
\item $\det(f):\det(N\oplus N^*) = \TCC \rightarrow \det(\TCC^{\oplus 2\rang(N)})=\TCC$ vaut $1$ et
\item $f$ envoie la structure $Spin$ naturelle de chaque composante $\RR N\oplus \RR N^*$ sur celle naturelle de la composante correspondante de $\RR (\TCC^{\oplus 2\rang(N)})$.
\end{itemize}
Remarquons qu'un tel isomorphisme est uniquement défini à homotopie près: un autre tel isomorphisme différera de $f$ par multiplication à droite par un automorphisme de $\TCC^{\oplus 2\rang(N)}$ de déterminant $1$ et préservant les structures $Spin$ sur $\RR (\TCC^{\oplus 2\rang(N)})$. Or d'après le Lemme 2.2 de \cite{article1} ces automorphismes sont homotopes à l'identité.

Nous avons ainsi grâce à la Proposition \ref{isotriv} un isomorphisme canonique entre $\Det[N\oplus N^*]$ et $\det(\HH^1(\surg,\RR)_{-1})^{\otimes 2\rang(N)}$. Comme ce dernier fibré est canoniquement trivial, nous obtenons l'isomorphisme voulu dans l'énoncé.
\end{proof}

\begin{Rem}
  Insistons sur une conséquence de la Proposition \ref{detdual}. Si $(N,c_N)$ et $(N',c_{N'})$ sont deux fibrés vectoriels complexes sur $(\surg,c_\sur)$, $\DB\in\rop[N]$ et $\DB'\in\rop[N']$ alors on n'a pas en général d'isomorphisme canonique $\ddet(\DB\otimes\DB') = \ddet(\DB)\otimes \ddet(\DB')$. En effet, en prenant $N$ de rang $1$, $N' = N^*$ et $\DB' = \DB^*$, l'action d'un élément de $(\Phi,\varphi)\in\riso{N}$ sur les orientations de $\Det\otimes \Det[N^*]$ est triviale d'après la Proposition \ref{detdual}, alors que l'automorphisme induit sur $N\otimes N^* = \TCC$ est $\relphi$ dont l'action sur les orientations de $\Det[N\otimes N^*] = \Det[\TCC]$ est donnée par $\det(\varphi^*)$ d'après la Proposition \ref{isotriv}.
\end{Rem}

Nous avons comme corollaire immédiat du Lemme \ref{isoksigma} et de la Proposition \ref{detdual} appliquée au fibré tangent à $\surg$ le résultat suivant.

\begin{Corollaire}\label{isotsigma}
 Nous avons un isomorphisme canonique
\[
\Det[T\surg]_{|\RR J(\surg)} \cong \det(\HH_1(\surg,\RR)_{+1}),
\]
où $\Det[T\surg]_{|\RR J(\surg)}$ est le fibré sur $\RR J(\surg)$ de fibre $H^0(\surg,T\sur_{g,J})_{+1} \otimes \left(\Lambda^{\max} H^1(\surg,T\sur_{g,J})_{+1}\right)^*$ au-dessus de $J$, tel que pout tout $\varphi\in\RR Diff^+(\surg)$, le diagramme
\[
\xymatrix{
  \Det[T\surg]_{|\RR J(\surg)}  \ar[d]_{\d\varphi} \ar[r] & \det(\HH_1(\surg,\RR)_{+1}) \ar[d]^{\varphi_*}\\
  \Det[T\surg]_{|\RR J(\surg)} \ar[r] & \det(\HH_1(\surg,\RR)_{+1})
}
\]
commute.\qed
\end{Corollaire}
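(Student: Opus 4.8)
Le plan est de déduire le résultat en dualisant l'isomorphisme du Lemme \ref{isoksigma} à l'aide de la Proposition \ref{detdual}, sans calcul supplémentaire. Je commencerais par remarquer que $K_\sur = (T\surg)^*$ et que, d'après le lemme précédent, la correspondance $\DB\mapsto\DB^*$ envoie $\DB_{T\sur,J}$ sur $\DB_{K_\sur,J}$; le fibré $\Det[K_\sur]^*$ construit dans la Proposition \ref{detdual} via cette correspondance est donc bien défini au-dessus de $\rop[T\surg]$, et par restriction au-dessus de $\RR J(\surg)$. Comme $\RR T\surg = T\RR\surg$ est un fibré en droites au-dessus d'une réunion de cercles, il est orientable, de sorte que l'hypothèse de la Proposition \ref{detdual} est satisfaite. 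Celle-ci fournit alors un isomorphisme canonique $\Det[T\surg]\cong\Det[K_\sur]^*$, pour lequel l'action sur les orientations de l'automorphisme $\d\varphi$ de $T\surg$ coïncide avec celle de son inverse transposé ${}^t(\d\varphi)^{-1} = (\d\varphi^{-1})^*$ sur les orientations de $\Det[K_\sur]$.

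J'appliquerais ensuite le Lemme \ref{isoksigma} pour identifier de façon équivariante $\Det[K_\sur]_{|\RR J(\surg)}\cong\det(\HH^1(\surg,\RR)_{-1})$, l'automorphisme $(\d\varphi^{-1})^*$ correspondant à $(\varphi^{-1})^*$. En dualisant, et en observant qu'au niveau des orientations l'action sur une droite réelle et celle sur sa duale sont identiques (le signe de $\det$ et celui de $\det^{-1}$ coïncident), j'obtiendrais $\Det[T\surg]_{|\RR J(\surg)}\cong\det(\HH^1(\surg,\RR)_{-1})^*$, l'action de $\d\varphi$ correspondant à celle de $(\varphi^{-1})^*$ sur les orientations de $\det(\HH^1(\surg,\RR)_{-1})$.

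Il resterait à reconnaître $\det(\HH^1(\surg,\RR)_{-1})^*$ comme $\det(\HH_1(\surg,\RR)_{+1})$. Pour cela j'utiliserais la dualité entre $H^1(\surg,\RR)_{-1}$ et $H_1(\surg,\RR)_{+1}$ déjà employée dans la preuve du Lemme \ref{isopart}, qui est $\varphi$-équivariante au sens où $\langle\varphi^*\alpha,c\rangle = \langle\alpha,\varphi_* c\rangle$. Sous cette identification, le contragrédient de $(\varphi^{-1})^*$ sur $\HH^1(\surg,\RR)_{-1}$ devient $\varphi_*$ sur $\HH_1(\surg,\RR)_{+1}$; comme le déterminant d'un opérateur et celui de sa transposée ont même signe, la formule de changement de variables $\int_c\omega\circ\d\varphi^{-1} = \int_{\varphi^{-1}_* c}\omega$ assurerait la commutativité du diagramme voulu, avec $\d\varphi$ à gauche et $\varphi_*$ à droite.

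La seule difficulté, mineure, serait la tenue des comptes des transposées et des inverses, afin de garantir que la flèche verticale de gauche soit bien $\d\varphi$ (et non son inverse) et celle de droite $\varphi_*$. Cette difficulté s'évanouit au niveau des orientations, puisque seul le signe du déterminant intervient et que $\det(A)$ et $\det(A)^{-1}$ ont toujours même signe; c'est précisément ce qui rend le résultat immédiat une fois le Lemme \ref{isoksigma} et la Proposition \ref{detdual} acquis.
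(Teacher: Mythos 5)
Votre démonstration est correcte et suit exactement la voie du papier, qui présente ce résultat comme corollaire immédiat du Lemme \ref{isoksigma} et de la Proposition \ref{detdual} appliquée au fibré tangent (dont la partie réelle $T\RR\surg$ est bien orientable). Vous ne faites qu'expliciter les détails de dualité que le papier laisse au lecteur, y compris l'identification finale $\det(\HH^1(\surg,\RR)_{-1})^*\cong\det(\HH_1(\surg,\RR)_{+1})$ via la dualité d'intégration déjà utilisée dans le Lemme \ref{isopart}.
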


\subsubsection{Espace de Teichmüller réel}\label{parteich}

Afin de simplifier l'exposé, nous supposons dans ce paragraphe que les courbes considérées sont de genre au moins deux. Nous commençons par rappeler quelques faits concernant l'espace de Teichmüller réel associé à une courbe réelle $(\surg,c_\sur)$ (voir en particulier \cite{sepp} et \cite{earle}).

Le groupe $Diff(\surg)$ des difféomorphismes de $\surg$ agit sur l'ensemble $J(\surg)$ des structures complexes sur $\surg$ par tiré en arrière:
\[\label{a}
(\varphi,J)\in Diff(\surg)\times J(\surg) \mapsto \varphi^*J = s(\varphi)\d\varphi^{-1}\circ J \circ\d\varphi,
\tag{$*$}
\]
où $s(\varphi)$ vaut $1$ lorsque $\varphi\in Diff^+(\surg)$ et $-1$ sinon.

\begin{Definition}
  L'espace de Teichmüller $\TT(\surg)$ associé à la surface compacte orientée $\surg$ est le quotient de l'ensemble des structures complexes de $\surg$ compatibles avec son orientation par l'action du sous-groupe $Diff_0(\surg)$ de $Diff(\surg)$ formé des difféomorphismes homotopes à l'identité,
\[
\TT(\surg)=J(\surg)/Diff_0(\surg).
\]

L'espace de Teichmüller réel associé à la courbe réelle $(\surg,c_\sur)$ est le quotient de $\RR J(\surg)$ par l'action du sous-groupe $\RR Diff_0(\surg)$ de $Diff_0(\surg)$ formé des difféomorphismes homotopes à l'identité qui commutent avec $c_\sur$,
\[
\TT(\surg,c_\sur)=\RR J(\surg)/\RR Diff_0(\surg).
\]
\end{Definition}

\begin{Lemme}\label{trivialite}
  Soit $\varphi\in Diff_0(\surg)$. S'il existe $J\in\RR J(\surg)$ telle que $\varphi^*J \in\RR J(\surg)$, alors $\varphi$ commute avec $c_\sur$. 
\end{Lemme}

\begin{proof}
  Comme $\varphi^*J\in\RR J(\surg)$,
\[
\begin{array}{l l}
(\varphi\circ c_\sur\circ\varphi^{-1})^*J & = (\varphi^{-1})^*(c_\sur)^*\left(\varphi^* J\right)  \\
                                        & = (\varphi^{-1})^*\varphi^* J\\
                                        & = J.
\end{array}
\]
Le difféomorphisme $c_\sur\circ\varphi\circ c_\sur \circ\varphi^{-1}$ est donc un automorphisme pour $J$. Il est de plus homotope à l'identité. Ainsi $c_\sur\circ\varphi\circ c_\sur \circ\varphi^{-1} = id$, et $c_\sur\circ\varphi = \varphi\circ c_\sur$.
\end{proof}

Le Lemme \ref{trivialite} nous assure que l'inclusion $\RR J(\surg)\rightarrow J(\surg)$ induit une injection de $\TT(\surg,c_\sur)$ dans $\TT(\surg)$. De plus, l'image de cette application est contenue dans le lieu fixe $\TT(\surg)_{c_\sur}\subset \TT(\surg)$ de l'action de $c_\sur$ sur $\TT(\sur)$ induite par (\ref{a}).

\begin{Proposition}\label{krav}
  \begin{itemize}
  \item [(Kravetz \cite{kravetz})] L'espace $\TT(\surg,c_\sur)$ est homéomorphe à la boule unité ouverte d'un espace vectoriel réel normé de dimension $3g-3$.
  \item [(Earle \cite{earle})] L'injection $\TT(\surg,c_\sur)\rightarrow \TT(\surg)_{c_\sur}$ est un homéomorphisme.
  \end{itemize}
\end{Proposition}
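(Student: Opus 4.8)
The plan is to derive both statements from Teichmüller's theorem, working with a base point that is itself a real structure. Fix $X_0\in\RR J(\surg)$, which exists since $\RR J(\surg)$ is nonempty; thus $c_\sur$ is anti-holomorphic for $X_0$. Let $Q(X_0)$ be the complex vector space of holomorphic quadratic differentials on $(\surg,X_0)$, of complex dimension $3g-3$ since $g\geq 2$, with its $L^1$-norm $\|\cdot\|$, and set $B=\{q\in Q(X_0):\|q\|<1\}$. Teichmüller's theorem furnishes a homeomorphism $h:\TT(\surg)\to B$ sending a class $[X]$ to the suitably rescaled initial quadratic differential of the unique extremal quasiconformal map from $X_0$ to $X$. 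I would first verify that, because $X_0$ is real, $h$ intertwines the involution $c_\sur^*$ of $\TT(\surg)$ with the $\CC$-antilinear involution $q\mapsto\overline{c_\sur^* q}$ of $Q(X_0)$, which preserves $\|\cdot\|$ and hence $B$. This equivariance follows from the uniqueness clause of Teichmüller's theorem: conjugating the extremal map from $X_0$ to $X$ by $c_\sur$ is again extremal, so it must realise the extremal map from $X_0$ to $c_\sur^* X$.

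For the Kravetz assertion, I restrict $h$ to fixed loci. A complex structure $J$ belongs to $\RR J(\surg)$ precisely when $c_\sur^* J=J$, so the natural map $\TT(\surg,c_\sur)\to\TT(\surg)$ lands in the fixed set $\TT(\surg)_{c_\sur}$ and, after applying $h$, inside $B^{c_\sur}$. The crucial point is that this image is all of $B^{c_\sur}$: given a $c_\sur$-invariant $q$ with $\|q\|<1$, the Teichmüller deformation of $X_0$ in the direction $q$ commutes with $c_\sur$, since in the natural coordinate $w=\int\sqrt{q}$ the involution acts compatibly with the stretch defining the deformation; the deformed surface therefore still carries $c_\sur$ as an anti-holomorphic involution and is represented by an element of $\RR J(\surg)$. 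This construction is continuous in $q$ and provides a continuous inverse, so $h$ identifies $\TT(\surg,c_\sur)$ homeomorphically with $B^{c_\sur}$. Finally $B^{c_\sur}$ is exactly the open unit ball of the real form $Q(X_0)^{c_\sur}$, a real vector space of dimension $3g-3$ normed by the restriction of $\|\cdot\|$, which is the claim.

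For the Earle assertion, injectivity of $\TT(\surg,c_\sur)\to\TT(\surg)_{c_\sur}$ is already given by Lemme \ref{trivialite}, and the image lies in $\TT(\surg)_{c_\sur}$ as observed. Since $h$ is a homeomorphism intertwining the two involutions, it restricts to a homeomorphism $\TT(\surg)_{c_\sur}\to B^{c_\sur}$ of fixed sets. Composing the injection with this restriction gives exactly the homeomorphism $\TT(\surg,c_\sur)\to B^{c_\sur}$ built above; because the second factor is a homeomorphism, the injection itself must be a homeomorphism. In particular it is surjective, which unwinds to the same fact that a class fixed by $c_\sur^*$ corresponds to a real quadratic differential and hence to a genuinely real surface.

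The main obstacle is the reality and equivariance of the Teichmüller deformation: one must check carefully that a $c_\sur$-invariant initial quadratic differential yields a surface on which $c_\sur$ remains anti-holomorphic, and that the homeomorphism $h$ based at a real point genuinely intertwines $c_\sur^*$ with conjugation on $Q(X_0)$. Both rest on the uniqueness of the extremal map. Once these are in place, the restriction to fixed loci, the identification of $B^{c_\sur}$ with the open ball of a real form of dimension $3g-3$, and the continuity of the inverse maps are all formal.
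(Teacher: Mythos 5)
The paper offers no proof of this Proposition: both items are imported from the literature (Kravetz \cite{kravetz} for the ball statement, Earle \cite{earle} for the identification with the fixed locus), so there is nothing internal to compare your argument against. Judged on its own, your proposal is the standard Teichm\"uller-theoretic route and its architecture is correct: base the Teichm\"uller homeomorphism $h:\TT(\surg)\rightarrow B\subset Q(X_0)$ at a real point $X_0\in\RR J(\surg)$, use uniqueness of the extremal map to show that $h$ intertwines $c_\sur^*$ with the antilinear involution $q\mapsto\overline{c_\sur^*q}$, identify $B^{c_\sur}$ with the open unit ball of a real form of real dimension $3g-3$, and then deduce the Earle statement formally from the Kravetz one (if $h_0\circ\iota$ and $h_0$ are homeomorphisms, so is $\iota$; injectivity of $\iota$ being Lemme \ref{trivialite}). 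Be aware, however, that the two points you defer are where essentially all the mathematical content sits. First, surjectivity onto $B^{c_\sur}$ --- that a $c_\sur$-invariant $q$ yields a deformed structure still lying in $\RR J(\surg)$ --- is precisely Earle's theorem in disguise; your one-line appeal to the natural coordinate $w=\int\sqrt{q}$ should be replaced by the verification that the Beltrami coefficient $k\,\overline{q}/|q|$ satisfies the reality condition forced by $\overline{c_\sur^*q}=q$, so that the solution of the corresponding deformation is $c_\sur$-equivariant. Second, continuity of the inverse must be checked into the quotient $\RR J(\surg)/\RR Diff_0(\surg)$, since it is exactly this step that shows the quotient topology on $\TT(\surg,c_\sur)$ coincides with the subspace topology on $\TT(\surg)_{c_\sur}$. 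With those two verifications written out, your argument gives a self-contained proof of what the paper only cites.
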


L'espace de Teichmüller réel est donc contractile et hérite d'une structure de variété analytique réelle venant de celle de $\TT(\surg)_{c_\sur}$.
En particulier, l'espace de Teichmüller réel associé à une courbe réelle est toujours orientable. 

D'autre part, l'action du groupe $Diff^+(\surg)$ (resp. $\RR Diff^+(\surg)$) sur $J(\surg)$ (resp. sur $\RR J(\surg)$) descend en une action du groupe des difféotopies (resp. du groupe des difféotopies réel) sur l'espace de Teichmüller (resp. réel).

\begin{Definition}
  Le groupe $\Gamma(\surg)$ des difféotopies de $\surg$ est l'ensemble des classes d'homotopie de difféomorphismes de $\surg$ préservant l'orientation fixée,
\[
\Gamma(\surg) = Diff^+(\surg)/Diff_0(\surg).
\]

Le groupe $\Gamma(\surg,c_\sur)$ des difféotopies réel de $(\surg,c_\sur)$ est l'ensemble des classes d'homotopie de difféomorphismes réels de $(\surg,c_\sur)$ préservant l'orientation fixée,
\[
\Gamma(\surg,c_\sur) = \RR Diff^+(\surg)/\RR Diff_0(\surg).
\]
\end{Definition}

Nous avons à nouveau une action de $c_\sur$ sur $\Gamma(\surg)$ par conjugaison. Notons $\Gamma(\surg)_{c_\sur}$ son lieu fixe.

\begin{Lemme}
  L'inclusion naturelle $\Gamma(\surg,c_\sur)\rightarrow\Gamma(\surg)$ a pour image $\Gamma(\surg)_{c_\sur}$.
\end{Lemme}

\begin{proof}
Fixons un difféomorphisme $\varphi\in\Gamma(\surg)_{c_\sur}$ et montrons qu'il est homotope à un difféomorphisme réel. Choisissons une structure complexe $J\in\RR J(\surg)$. Alors par définition, il existe $\psi\in Diff_0(\surg)$ tel que
\[
c_\sur\circ\varphi\circ c_\sur = \varphi\circ\psi,
\]
et
\[
(c_\sur)^*\varphi^* J = (c_\sur)^*\varphi^* (c_\sur)^* J = \psi^* \varphi^* J.
\]
Donc $\varphi^*J$ est un élément de $\TT(\surg)_{c_\sur}$. D'après la Proposition \ref{krav}, il existe un difféomorphisme $\gamma$ homotope à l'identité tel que $\gamma^*\varphi^* J$ est réelle. Alors, le difféomorphisme $\left((\varphi\circ\gamma)\circ c_\sur\right)\circ\left(c_\sur\circ(\varphi\circ\gamma)\right)^{-1}$ est un automorphisme pour la structure complexe $J$ et est homotope à l'identité. Comme la surface $\surg$ est de genre au moins deux, ce difféomorphisme est égal à l'identité. D'où $(\varphi\circ\gamma)\circ c_\sur = c_\sur\circ(\varphi\circ\gamma)$. Ainsi $\varphi$ est homotope à $\varphi\circ\gamma$ qui est réel.
\end{proof}

Remarquons avant de poursuivre que l'on pourrait aussi définir les groupes de difféotopies en considérant les difféomorphismes à isotopie près. Il n'est pas évident a priori que l'on obtienne les mêmes groupes. C'est l'objet de la Proposition \ref{iddiff}.

\begin{Proposition}[Earle-Eells \cite{earleeels}]\label{iddiff}
  Les groupes $Diff_0(\surg)$ et $\RR Diff_0(\surg)$ sont les composantes connexes de l'identité des groupes $Diff(\surg)$ et $\RR Diff(\surg)$.
\end{Proposition}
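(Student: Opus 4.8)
The plan is to reduce both equalities to a single statement --- that on a surface of genus at least two a diffeomorphism homotopic to the identity is in fact isotopic to it, equivariantly in the real case --- and to extract this from the Earle--Eells fibre bundle description of Teichmüller theory, used in a $c_\sur$-equivariant form. One inclusion is free in each case: a path $t\mapsto\varphi_t$ joining $id$ to $\varphi$ inside $Diff(\surg)$ (resp. inside $\RR Diff(\surg)$) is in particular a homotopy from $id$ to $\varphi$, and in the real case every $\varphi_t$, including the endpoint, commutes with $c_\sur$. Hence the identity component of $Diff(\surg)$ (resp. of $\RR Diff(\surg)$) is contained in $Diff_0(\surg)$ (resp. in $\RR Diff_0(\surg)$), and everything rests on the reverse inclusions.

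For $Diff(\surg)$ I would argue directly from the Earle--Eells theorem \cite{earleeels}: since $g\geq 2$, the projection $J(\surg)\to\TT(\surg)$ is a principal bundle whose structure group is the identity component of $Diff^+(\surg)$, and this component is contractible. Combining this with the classical Baer--Epstein fact that two diffeomorphisms of a closed oriented surface are homotopic if and only if they are isotopic, the diffeomorphisms homotopic to the identity are exactly those isotopic to it, that is, exactly the identity component. This settles the non-real statement, after which $Diff_0(\surg)$ may be used interchangeably as ``homotopic to $id$'' and as the identity component.

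For $\RR Diff(\surg)$ the involution $c_\sur$ acts on the whole Earle--Eells bundle $Diff_0(\surg)\to J(\surg)\to\TT(\surg)$: on $J(\surg)$ by $J\mapsto -\d c_\sur^{-1}\circ J\circ\d c_\sur$, on $Diff_0(\surg)$ by conjugation $\psi\mapsto c_\sur\circ\psi\circ c_\sur$, and on $\TT(\surg)$ by the induced map, all compatible with the projection. Passing to fixed points gives $J(\surg)^{c_\sur}=\RR J(\surg)$ and, by Earle's theorem (Proposition \ref{krav}), $\TT(\surg)^{c_\sur}=\TT(\surg,c_\sur)$, while the fibre becomes $Diff_0(\surg)^{c_\sur}=Diff_0(\surg)\cap\RR Diff(\surg)$. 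The step I expect to be the main obstacle is checking that this fixed-point data is still a fibration $Diff_0(\surg)\cap\RR Diff(\surg)\to\RR J(\surg)\to\TT(\surg,c_\sur)$: this needs equivariant local trivialisations of the Earle--Eells bundle, i.e. $c_\sur$-invariant slices for the free, properly discontinuous action of $Diff_0(\surg)$ on $J(\surg)$, which one produces by an equivariant slice construction. Granting this, $\RR J(\surg)$ is contractible and $\TT(\surg,c_\sur)$ is a ball by Proposition \ref{krav}, so the long exact homotopy sequence of the fibration forces $Diff_0(\surg)\cap\RR Diff(\surg)$ to be weakly contractible, hence path-connected.

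It remains to assemble the pieces. Let $\varphi\in\RR Diff_0(\surg)$, so $\varphi$ is homotopic to $id$ and commutes with $c_\sur$. By the non-real case $\varphi$ lies in $Diff_0(\surg)$, whence $\varphi\in Diff_0(\surg)\cap\RR Diff(\surg)$; by the connectedness just established there is a path from $id$ to $\varphi$ staying inside this fixed set, which is a path in $\RR Diff(\surg)$, so $\varphi$ belongs to the identity component of $\RR Diff(\surg)$. Combined with the free inclusion of the first paragraph, this gives that $\RR Diff_0(\surg)$ is precisely the identity component of $\RR Diff(\surg)$, completing the proof.
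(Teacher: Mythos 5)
The paper gives no proof of this Proposition at all: it is stated as a citation of Earle--Eells, so there is nothing internal to compare your argument against. Your reconstruction is nonetheless a faithful and essentially correct account of how the result is obtained. Two remarks. First, for the non-real case you invoke both the contractibility of the identity component and the Baer--Epstein theorem; only the latter is needed, since ``homotopic to $\id$ $\Rightarrow$ isotopic to $\id$'' already identifies $Diff_0(\surg)$ with the path component of the identity, and conversely a path in $Diff(\surg)$ is an isotopy. Second, the real case is where the actual content lies, and your route --- pass to $c_\sur$-fixed points in the Earle--Eells bundle, identify the fixed base with $\TT(\surg,c_\sur)$ via Proposition \ref{krav}, check that the fiber is a torsor under $Diff_0(\surg)\cap\RR Diff(\surg)$ (freeness from genus $\geq 2$, the fact that a $\psi\in Diff_0(\surg)$ carrying one real structure to another is automatically real being exactly Lemma \ref{trivialite}), and conclude connectedness of the fiber from contractibility of $\RR J(\surg)$ and of the base --- is the standard one. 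The only genuine gap is the one you flag yourself: local triviality of the fixed-point fibration, i.e.\ the existence of $c_\sur$-equivariant slices for the $Diff_0(\surg)$-action on $J(\surg)$. This is not automatic from the non-equivariant Earle--Eells construction and is precisely the technical content one is outsourcing to \cite{earleeels} (or to Earle's equivariant refinement); as long as that is acknowledged as a citation rather than claimed as proved, the argument is complete.
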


\begin{Theoreme}\label{actionteich}
Soit $(\surg,c_\sur)$ une courbe réelle de genre au moins deux.
Le fibré des orientations de $\TT(\surg,c_\sur)$ est canoniquement isomorphe au fibré trivial $\det(\HH_1(\surg,\RR)_{+1})^*$.

  En particulier, l'action d'un élément $\varphi$ de $\Gamma(\surg,c_\sur)$ sur les orientations de l'espace de Teichmüller réel associé à $(\surg,c_\sur)$ est donnée par le signe du déterminant de l'application $\varphi_*: H_1(\surg,\RR)_{+1}\rightarrow H_1(\surg,\RR)_{+1}$.
\end{Theoreme}

Commençons par décrire le fibré tangent à $\TT(\surg,c_\sur)$.

\begin{Lemme}\label{tangent}
  Le fibré tangent à $\TT(\surg,c_\sur)$ est le fibré $\HH^1(\surg,T\surg)_{-1}$ de fibre $H^1(\surg,T\sur_{g,J})_{-1}$ au-dessus de $J\in \TT(\surg,c_\sur)$.
\end{Lemme}

\begin{proof}
  D'après la Proposition 1.1 de \cite{wel1}, la fibre du tangent à $\RR J(\surg)$ au point $J$ est
\[
\begin{array}{l l}
\Gamma(\surg,\Lambda_J^{0,1}\surg\otimes T\sur_{g,J})_{-1} =  \{\dot{J}\in\Gamma(\surg,End(T\surg))\ |& J\dot{J}=-\dot{J}J, \\
& -\d c_\sur\circ \dot{J}\circ \d c_\sur = \dot{J} \}.  
\end{array}
\]
Le tangent à $\TT(\surg,c_\sur)$ au point $[J]$ est donc le quotient de $\Gamma(\surg,\Lambda_J^{0,1}\surg\otimes T\sur_{g,J})_{-1}$ par l'action de l'algèbre le Lie de $\RR Diff_0(\surg)$. Pour calculer cette action, prenons un chemin $\varphi_t\in \RR Diff_0(\surg)$ tel que $\varphi_0 = \id$ et $\frac{\d \varphi_t}{\d t}_{|t=0} = X\in\Gamma(\surg,T\surg)_{+1}$. Notons $\nabla^J$ la connexion de Levi-Civita pour une métrique associée à $J$. Dérivons en zéro le chemin $\varphi_t^*J$:
\[
\begin{array}{l l}
  \dd\frac{\d(\varphi_t^*J)}{\d t}_{|t=0} & = \dd\frac{\d(\d \varphi_t^{-1} \circ J\circ \d \varphi_t)}{\d t}_{|t=0}\\
                                        & = -(\nabla^J X)\circ J + J\circ(\nabla^J X)\\%préciser??
                                        & = J\DB_{T\surg,J} X\\
                                        & = \DB_{T\surg,J} (J X),
\end{array}
\]
car $\nabla^J$ est sans torsion%voir McDuff Sal, (t,\lambda)\mapsto (\varphi_t(z(\lambda)))
, $\nabla^J J=0$ et $\DB_{T\surg,J}$ est $\CC$-linéaire. Or la multiplication par $J$ est un isomorphisme entre $\Gamma(\surg,T\surg)_{+1}$ et $\Gamma(\surg,T\surg)_{-1}$. Ainsi, la fibre du tangent à $\TT(\surg,c_\sur)$ est le quotient
\[
\Gamma(\surg,\Lambda_J^{0,1}\surg\otimes T\sur_{g,J})_{-1}/\DB_{T\surg,J}\left(\Gamma(\surg,T\surg)_{-1}\right) = H^1(\surg,T\sur_{g,J})_{-1}.
\]
\end{proof}

\begin{proof}[Démonstration du Théorème \ref{actionteich}]
D'après le Lemme \ref{tangent},
\[
\det(T\TT(\surg,c_\sur)) = \det(\HH^1(\surg,T\surg)_{-1}).
\]
La multiplication par $J$ en chaque point de $\RR J(\surg)$ induit un isomorphisme entre $\HH^1(\surg,T\surg)_{-1}$ et $\HH^1(\surg,T\surg)_{+1}$ qui passe au quotient par l'action de $\RR Diff_0(\surg)$. Ainsi, d'après le Corollaire \ref{isotsigma}, et comme $H^0(\surg,T\sur_{g,J}) = 0$, on a un isomorphisme canonique
\[
\det(T\TT(\surg,c_\sur)) = \det(\HH^1(\surg,\RR)_{+1})^*.
\]
\end{proof}

Le Théorème \ref{actionteich} permet de calculer comme corollaire immédiat la première classe de Stiefel-Whitney du quotient $\TT(\surg,c_\sur)/\Gamma(\surg,c_\sur)$. 

  \begin{Definition}
L'espace des modules $\MM_{g,k,\varepsilon}$ des courbes de genre $g$ munies d'une involution anti-holomorphe dont la partie réelle est séparante, c'est-à-dire telle que la partie réelle coupe la courbe en deux composantes connexes, (resp. non séparante) si $\varepsilon = 1$ (resp. si $\varepsilon = 0$) et a $k$ composantes connexes est le quotient
\[
\MM_{g,k,\varepsilon} = \TT(\surg,c_\sur)/\Gamma(\surg,c_\sur),
\]
où $(\surg,c_\sur)$ est de type topologique $(g,k,\varepsilon)$.
  \end{Definition}

Toutefois, l'action de $\Gamma(\surg,c_\sur)$ sur $\TT(\surg,c_\sur)$ n'est pas libre, et le quotient a des singularités qui correspondent à des courbes ayant des automorphismes.

\begin{Proposition}
  Si $g$ est strictement plus grand que trois alors l'ensemble des structures complexes sur $(\surg,c_\sur)$ admettant au moins un automorphisme réel non trivial est de codimension au moins deux dans $\TT(\surg,c_\sur)$.
\end{Proposition}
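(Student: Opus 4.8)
L'idée serait de décrire l'ensemble considéré comme une réunion de lieux fixes dans $\TT(\surg,c_\sur)$ et d'en minorer la codimension. Un point $[J]$ appartient à cet ensemble si et seulement s'il existe $[\varphi]\neq 1$ dans $\Gamma(\surg,c_\sur)$ fixant $[J]$; un tel $\varphi$ représente alors un automorphisme $J$-holomorphe de $(\surg,J)$ commutant avec $c_\sur$. D'après le théorème de Hurwitz, le groupe des automorphismes de $(\surg,J)$ est fini dès que $g\geq 2$, donc $[\varphi]$ est d'ordre fini $n$. Comme toute puissance de $\varphi$ commute encore avec $c_\sur$ et que $\Fix([\varphi])\subset\Fix([\varphi^{n/p}])$ pour tout premier $p$ divisant $n$, l'ensemble cherché est la réunion des $\Fix([\psi])$ pour $\psi$ d'ordre premier. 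Le groupe $\Gamma(\surg,c_\sur)$ étant dénombrable, il suffira de montrer que chacun de ces lieux fixes est de codimension au moins deux.

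Je calculerais ensuite la dimension de $\Fix([\psi])$. Ce lieu est une sous-variété lisse dont l'espace tangent en un point fixe $[J]$ est le sous-espace invariant par $\psi$ de $T_{[J]}\TT(\surg,c_\sur) = H^1(\surg,T\sur_{g,J})_{-1}$ (Lemme \ref{tangent}). Puisque $\psi$ est $J$-holomorphe, il agit de façon $\CC$-linéaire sur $H^1(\surg,T\sur_{g,J})$ en commutant à l'action antilinéaire de $c_\sur$; le sous-espace propre $-1$ de $c_\sur$ restreint aux invariants $H^1(\surg,T\sur_{g,J})^{\psi}$ en est donc une forme réelle, d'où
\[
\dim_\RR \Fix([\psi]) = \dim_\CC H^1(\surg,T\sur_{g,J})^{\psi}.
\]
Cette dimension est celle de l'espace de Teichmüller du quotient orbifold $\surg/\langle\psi\rangle$: si ce quotient est de genre $g'$ et si le revêtement $\surg\to\surg/\langle\psi\rangle$ est ramifié au-dessus de $r$ points, elle vaut $3g'-3+r$. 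Avec $\dim\TT(\surg,c_\sur)=3g-3$ (Proposition \ref{krav}), on obtient
\[
\codim \Fix([\psi]) = (3g-3)-(3g'-3+r) = 3(g-g')-r.
\]

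Il resterait alors à minorer cette codimension par une analyse de Riemann-Hurwitz. Le revêtement étant de degré $p$, on a $2g-2 = p(2g'-2)+r(p-1)$, et en reportant cette relation dans l'expression précédente on doit trouver l'identité
\[
\codim \Fix([\psi]) = (g-2) + g'(2p-3) + (p-2)(r-2).
\]
Comme $p\geq 2$, les coefficients $2p-3$ et $p-2$ sont positifs ou nuls; il suffit de vérifier que le terme entre crochets est positif ou nul pour toutes les configurations qui se produisent effectivement. Lorsque $g'\geq 1$ cela est immédiat, et lorsque $g'=0$ on utilisera qu'un revêtement cyclique connexe de $\PP$ ne peut être ramifié en au plus deux points, ce qui force $r\geq 3$. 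On en déduit $\codim\Fix([\psi])\geq g-2$, le cas hyperelliptique $p=2$, $g'=0$, $r=2g+2$ réalisant l'égalité. Ainsi $\codim\Fix([\psi])\geq g-2\geq 2$ dès que $g>3$.

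Le point le plus délicat me semble double: d'une part justifier proprement l'égalité entre la dimension réelle du lieu fixe réel et la dimension complexe des invariants (elle repose sur la commutation de $\psi$ et $c_\sur$, qui garantit que $c_\sur$ stabilise $H^1(\surg,T\sur_{g,J})^{\psi}$), et d'autre part mener soigneusement l'analyse de Riemann-Hurwitz en écartant les quelques revêtements dégénérés. On remarquera que l'hypothèse $g>3$ est optimale: en genre $3$, le lieu hyperelliptique est exactement de codimension $g-2=1$.
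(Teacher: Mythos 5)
Votre démonstration est correcte et suit essentiellement la même voie que celle du texte (réduction aux automorphismes d'ordre premier, identification de la dimension du lieu fixe avec $\dim_\CC H^1(\surg,T\sur_{g,J})^{\psi}=3g'-3+r$, puis Riemann--Hurwitz et discussion selon $g'$) ; vous ne faites qu'expliciter la discussion finale, que l'article laisse au lecteur en renvoyant à Arbarello--Cornalba--Griffiths, via l'identité $\codim\Fix([\psi])=(g-2)+g'(2p-3)+(p-2)(r-2)$, que j'ai vérifiée. Seule petite imprécision : pour $g'=0$ l'argument correct est qu'un revêtement cyclique de $\PP^1$ ramifié en au plus deux points est de genre $0$, donc $g\geq 2$ force $r\geq 3$, et pour $g'\geq 1$ le cas $r<2$ demande une ligne de vérification supplémentaire (mais passe sans difficulté).
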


\begin{proof}
On rappelle simplement rapidement le schéma de la preuve (pour plus de détails, on pourra lire Arbarello-Cornalba-Griffiths \cite{arbarello} Chapter XII, Proposition (2.5)). %On peut choisir la famille de kuranishi c_\sur equivariaante

Soit $J$ une structure complexe sur $(\surg,c_\sur)$ admettant un automorphisme $\varphi$ d'ordre premier $p>1$. Alors, le groupe des automorphismes réels de $J$ agit linéairement sur un voisinage de $J$ dans $\TT(\surg,c_\sur)$, et la dimension du sous-espace sur lequel $\varphi$ se propage est égale à la dimension $d$ de $H^1(\surg,T\sur_{g,J})_{-1}^\varphi$ qui est le sous espace vectoriel de $H^1(\surg,T\sur_{g,J})_{-1}$ formé des éléments $\varphi$-invariants. En notant $g'$ le genre de la courbe $\surg/\varphi$ et $h$ le nombre de points ramifiés de la projection, on obtient
\[
d = 3g'-3 +h.
\]
 La formule de Riemann-Hurwitz donne aussi%detailler les calculs
\[
3g-3 - d = (p-1)(3g'-3) + \frac{1}{2}h(3p-5).
\]
On conclut en étudiant les différents cas selon le genre $g'$.
\end{proof}

Notons $\MM^*_{g,k,\varepsilon}$ le sous-ensemble de $\MM_{g,k,\varepsilon}$ formé des courbes sans automorphisme.

\begin{Corollaire}\label{delmum}
  Pour $g\geq 4$, la première classe de Stiefel-Whitney de $\MM^*_{g,k,\varepsilon}$ est donnée par
\[
w_1(\MM^*_{g,k,\varepsilon}) = w_1(\HH_1(\surg,\RR)_{+1}).
\]\qed
\end{Corollaire}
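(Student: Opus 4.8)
The plan is to use that $\MM^*_{g,k,\varepsilon}$ is by construction the locus over which the action of $\Gamma(\surg,c_\sur)$ on $\TT(\surg,c_\sur)$ is free, so that if $\TT^*\subset\TT(\surg,c_\sur)$ denotes the preimage of $\MM^*_{g,k,\varepsilon}$, then the quotient map $q\colon\TT^*\to\MM^*_{g,k,\varepsilon}$ is a covering and $q^*T\MM^*_{g,k,\varepsilon}\cong T\TT^*$. In particular $q^*\det(T\MM^*_{g,k,\varepsilon})\cong\det(T\TT(\surg,c_\sur))_{|\TT^*}$, so that the line bundle $\det(T\MM^*_{g,k,\varepsilon})$, and hence $w_1(\MM^*_{g,k,\varepsilon})$, is entirely determined by the way $\Gamma(\surg,c_\sur)$ acts on the orientation bundle of $\TT(\surg,c_\sur)$.

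First I would invoke Théorème \ref{actionteich}, which furnishes a canonical isomorphism $\det(T\TT(\surg,c_\sur))\cong\det(\HH_1(\surg,\RR)_{+1})^*$ and identifies the action of any $\varphi\in\Gamma(\surg,c_\sur)$ on orientations of real Teichmüller space with the sign of $\det(\varphi_*|_{H_1(\surg,\RR)_{+1}})$. Because this isomorphism is canonical, it is automatically $\Gamma(\surg,c_\sur)$-equivariant: $\varphi$ acts on the left-hand side through its action on orientations and on the right-hand side through $\varphi_*$ on $H_1(\surg,\RR)_{+1}$, and the theorem asserts precisely that these two actions agree. I would then descend this equivariant isomorphism along $q$, obtaining a canonical isomorphism between $\det(T\MM^*_{g,k,\varepsilon})$ and the real line bundle over $\MM^*_{g,k,\varepsilon}$ obtained by descending $\det(\HH_1(\surg,\RR)_{+1})^*$, that is, the dual of $\det$ of the descended bundle $\HH_1(\surg,\RR)_{+1}$.

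To conclude I would take first Stiefel-Whitney classes and use that $w_1(L)=w_1(L^*)$ and $w_1(\det E)=w_1(E)$ for real (line) bundles, which yields $w_1(\MM^*_{g,k,\varepsilon})=w_1(\HH_1(\surg,\RR)_{+1})$, the right-hand side now being read over $\MM^*_{g,k,\varepsilon}$. The only delicate points are the freeness of the action and the resulting covering description of $T\MM^*_{g,k,\varepsilon}$; both hold on $\MM^*_{g,k,\varepsilon}$ by the very definition of that locus, the hypothesis $g\geq 4$ ensuring through the preceding Proposition that it is the expected open dense part of $\MM_{g,k,\varepsilon}$. The genuine content is thus already contained in Théorème \ref{actionteich}, and the corollary follows at once from the $\Gamma(\surg,c_\sur)$-equivariance of its canonical isomorphism.
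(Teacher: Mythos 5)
Your argument is correct and coincides with the paper's: Corollaire \ref{delmum} is stated there without proof as an immediate consequence of the Théorème \ref{actionteich}, and the steps you make explicit (freeness of the $\Gamma(\surg,c_\sur)$-action over $\MM^*_{g,k,\varepsilon}$, descent of the canonical equivariant isomorphism $\det(T\TT(\surg,c_\sur))\cong\det(\HH_1(\surg,\RR)_{+1})^*$ along the covering, then taking $w_1$) are exactly the intended ones. Nothing is missing.
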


\subsection{Automorphismes et diviseurs}\label{caspart}

\subsubsection{Opérateurs de Cauchy-Riemann et diviseurs}\label{diviseur}

Nous passons maintenant au cas général d'un fibré en droites complexes $(N,c_N)$ muni d'une structure réelle sur $(\surg,c_\sur)$.

\begin{Definition}
 Un diviseur $D = \dd\sum_i a_i x_i$, $a_i\in\ZZ$ et $x_i\in\surg$, invariant par $c_\sur$ et tel que
\begin{itemize}
\item $D$ est de degré $\deg(N)$ et
\item pour chaque composante réelle $(\RR\surg)_s$ de $\surg$, la parité du degré du diviseur $D_{|(\RR\surg)_s} = \dd\sum_{x_i\in(\RR\surg)_s}a_i x_i$ est donnée par $w_1(\RR N)([\RR\surg]_s)$
\end{itemize}
sera dit compatible avec $(N,c_N)$.
\end{Definition}

Un diviseur compatible avec $(N,c_N)$ nous fournit des sections privilégiées de la projection $(\DB,J)\in\rop \mapsto J\in\RR J(\surg)$.

\begin{Definition}
Soit $D$ un diviseur compatible avec $(N,c_N)$. Une section $\DB_D : J\in\RR J(\surg)\mapsto (\DB,J)\in\rop$ sera dite associée à $D$ si pour chaque $J\in\RR J(\surg)$ le fibré $(N,\DB_{D,J})$ est isomorphe à $\OO_{\surg,J}(D)$.

Une section $\DB_D$ associée à $D$ induit un fibré en droites réelles sur $\RR J(\surg)$ dont la fibre au-dessus de $J\in\RR J(\surg)$ est l'ensemble des sections méromorphes réelles de $(N,c_N,\DB_{D,J})$ ayant $D$ pour diviseur. Comme $\RR J(\surg)$ est contractile, ce fibré en droites est orientable. Un couple formé d'une section $\DB_D$ associée à $D$ et d'une orientation du fibré en droite induit sur $\RR J(\surg)$ est appelé section polarisée associée à $D$, et on le notera $\DB^+_D$. 
\end{Definition}

\begin{Rem}
Le quotient de $\rop$ par l'action de $\raut{N}$ est le groupe de Picard universel $\RR \mathcal{P}ic_{w_1(\RR N)}^{\deg(N)}(\surg)$, qui est un fibré principal sur $\RR J(\surg)$ dont le groupe est la Jacobienne réelle de $(\surg,c_\sur)$. \`A un diviseur $D$ compatible avec $(N,c_N)$ est associée une section $\OO_{\surg}(D): \RR J(\surg) \rightarrow \RR \mathcal{P}ic_{w_1(\RR N)}^{\deg(N)}(\surg)$ de ce fibré. Le sous-ensemble $\ropd$ des éléments de $\rop$ qui se projettent sur cette section forme un fibré principal sur $\RR J(\surg)$ de groupe $\raut{N}/\RR^*$. Nous avons donc la situation suivante
\[
\xymatrix{
\ropd \ar@<-0.2pc>@{^(-}[r] \ar@<+0.2pc>@{-}[r] & \rop \ar[rr] \ar[rd] && \RR \mathcal{P}ic_{w_1(\RR N)}^{\deg(N)}(\surg) \ar[ld] \\
             & & \RR J(\surg) \ar@/_1pc/[ru]_{\OO_{\surg}(D)} \ar@/^1pc/[llu]^{\DB_D}
}
\]
Autrement dit, les sections associées à $D$ sont les sections du fibré $\ropd \rightarrow\RR J(\surg)$. En particulier, pour chaque structure complexe $J\in \RR J(\surg)$, il existe un voisinage $J\in V\subset \RR J(\surg)$, des familles continues de coordonnées locales $J'$-holomorphes $(\zeta_{J',x_i})_{J'\in V}$ sur des voisinages disjoints $\ZZ/2\ZZ$-équivariants $U_{x_i}$ des $x_i$, et une famille continue d'isomorphismes entre $(N,\DB_{D,J'},c_N)_{J'\in V}$ et les fibrés en droites holomorphes réels définis comme le recollement de $(\surg\setminus\{x_i\})\times \CC$ et $U_{x_i}\times\CC$ par les changements de cartes
\[
\begin{array}{c c c}
(\surg\setminus\{x_i\})\cap U_{x_i}\times \CC & \rightarrow & U_{x_i}\cap(\surg\setminus\{x_i\})\times\CC  \\
(z,v) & \mapsto & (z,\zeta_{J',x_i}(z)^{a_i}v).
\end{array}
\]

Le revêtement double $\ropd^+$ de $\ropd$ formé des opérateurs polarisés associés à $D$ est un fibré principal sur $\RR J(\surg)$ de groupe $\raut{N}/\RR_+^*$. Ses sections sont les sections polarisées associées à $D$.
\end{Rem}

De plus, nous pouvons décrire comment passer d'une section associée à un diviseur compatible à une autre.

\begin{Lemme}\label{sectionpol}
Soit $D$ un diviseur compatible avec $(N,c_N)$. Si $\DB_D$ et $\DB'_D$ sont deux familles d'opérateurs de Cauchy-Riemann sur $(N,c_N)$ associées au diviseur $D$, alors il existe une application continue $F : \RR J(\surg)\rightarrow \raut{N}$ telle que pour tout $J$ dans $\RR J(\surg)$ on a $\DB_{D,J} = F_J^*\DB'_{D,J}$. De plus, cette application est unique à multiplication par une fonction continue de $\RR J(\surg)$ dans $\RR^*$ près.

Si les familles d'opérateurs sont polarisées et que l'on impose que $F$ préserve les polarisations, alors $F$ est définie de façon unique à homotopie près.
\end{Lemme}

\begin{proof}
Le groupe $\raut{N}/\RR^*$ des automorphismes de $N$ modulo les automorphismes constants agit librement sur $\rop$. En effet, un élément de $\raut{N} = \RR\CCC^\infty(\surg,\CC^*)$ qui préserve un opérateur de Cauchy Riemann sur $N$ est holomorphe, et est donc de la forme $v\in N\mapsto \lambda v\in N$, avec $\lambda\in\RR^*$.

Par définition, pour chaque $J$ dans $\RR J(\surg)$ il existe un voisinage $V$ de $J$ et une unique application continue $[F]$ de $V$ dans $\raut{N}/\RR^*$ telle que pour tout $J'$ dans $V$ on a $\DB_{D,J'} = [F]_{J'}^*\DB'_{D,J'}$. Par unicité, $[F]$ est en fait définie sur tout $\RR J(\surg)$. Puis, comme $\RR J(\surg)$ est contractile $[F]$ admet un relevé dans $\raut{N}$, c'est-à-dire une application continue $F : \RR J(\surg)\rightarrow \raut{N}$ telle que pour tout $J$ dans $\RR J(\surg)$ on a $\DB_{D,J} = F_J^*\DB'_{D,J}$.

Si les deux familles sont polarisées, alors soit $F$ soit $-F$ préserve les polarisations, et deux familles d'automorphismes qui envoient une famille polarisée sur l'autre diffèrent d'une fonction continue sur $\RR J(\surg)$ à valeurs dans $\RR_+^*$.
\end{proof}

Considérons maintenant un quadruplet d'entiers positifs ou nuls $\ud = (r^+,r^-,s^+,s^-)\in \NN^4$. Notons $\surg^{\ud}$ le sous-ensemble de $(\RR\surg)^{r^++r^-}\times \surg^{s^+ + s^-}\setminus\Delta$, où $\Delta$ est la diagonale épaisse, formé des éléments $\ux = (\ux^+,\ux^-,\uz^+,\uz^-)$, où $\uz^+ = (z^+_1,c_\sur(z^+_1),\ldots,z^+_{s^+},c_\sur(z^+_{s^+}))$ et $\uz^- = (z^-_1,c_\sur(z^-_1),\ldots,z^-_{s^-},c_\sur(z^-_{s^-}))$ et tels que le diviseur $D_{\ux} = \dd\sum_{x\in\ux^{+}} x -\sum_{x\in\ux^{-}} x +\sum_{i=1}^{s^+} \left(z_i^+ + c_\sur(z^+_i)\right) - \sum_{i=1}^{s^-} \left(z_i^- + c_\sur(z^-_i)\right)$ est compatible avec $(N,c_N)$.

\begin{Definition}
 Si $\surg^{\ud}$ est non-vide, on dira que $\ud$ est adapté à $(N,c_N)$. Dans ce cas, à tout élément $\ux$ de $\surg^{\ud}$ on associe un diviseur $D_{\ux} = \dd\sum_{x\in\ux^{+}} x -\sum_{x\in\ux^{-}} x +\sum_{i=1}^{s^+} \left(z_i^+ + c_\sur(z^+_i)\right) - \sum_{i=1}^{s^-} \left(z_i^- + c_\sur(z^-_i)\right)$ qui est compatible avec $(N,c_N)$.
\end{Definition}
 
Considérons le fibré en espaces affines $pr_1^* \rop \xrightarrow{\pi} \RR J(\surg)\times \surg^{\ud}$, où $pr_1 : \RR J(\surg)\times \surg^{\ud}\rightarrow \RR J(\surg)$ est la première projection. On définit l'ensemble $\ropc\subset pr_1^*\rop$ dont la fibre au-dessus de $\ux$ est 
\[
\RR\CCCC_{D_{\ux}}(N) = \{(\DB,J)\in \RR\CCCC(N)\ |\ (N,\DB)\cong \OO_{\surg,J}(D_{\ux})\}.
\] 
On dira qu'un élément $\DB$ de $\RR\CCC_{D_{\ux}}(N)$ est polarisé si on a choisi une orientation de la droite réelle formée des sections méromorphes réelles de $(N,c_N,\DB)$ de diviseur $D_{\ux}$.

\begin{Lemme}
  L'ensemble $\ropc$ est un fibré $\raut{N}/\RR^*$-principal au-dessus de $\RR J(\surg)\times\surg^{\ud}$. Son revêtement double $\ropcp$ formé des opérateurs polarisés est un fibré $\raut{N}/\RR_+^*$-principal.\qed
\end{Lemme}

\begin{Rem}
 Le groupe $\riso{N}$ agit naturellement sur $\ropc$. Toutefois, il n'agit pas par automorphisme de fibré principal.
\end{Rem}

Pour chaque élément $\ux$ de $\surg^{\ud}$, le fibré $\RR\CCCC_{D_{\ux}}(N)$ au-dessus de $\RR J(\surg)$ est trivialisable. Toutefois, le fibré total $\ropc$ ne l'est pas. Notons $\raut{N}^+\subset \raut{N}$ le groupe des automorphismes au-dessus de l'identité qui préservent les orientations de $\Det$. Ce groupe est décrit au \S 4 de \cite{article1}. Rappelons simplement le résultat suivant.

\begin{Proposition}\label{rapp}
  Soit $(\surg,c_\sur)$ une courbe réelle et $(N,c_N)$ un fibré en droites complexes muni d'une structure réelle. Soit $a\subset \surg$ une courbe simple orientée. On considère les éléments $f_a$ de $\raut{N}$ de la forme suivante.
  \begin{enumerate}
  \item Si $a$ est une composante de $\RR \surg$, on choisit un voisinage tubulaire réel de $a$ de la forme $(\theta,t)\in S^1\times [-1,1]$, où $a$ correspond à $t=0$ et $c_\sur(\theta,t) = (\theta,-t)$. On pose alors $f_a(\theta,t) = -\e^{i\pi t}$ sur ce voisinage et on prolonge par $1$ en dehors (voir Figure \ref{ann}).
\item Si $a$ est une courbe globalement stable par $c_\sur$, on choisit un voisinage tubulaire réel de $a$ de la forme $(\theta,t)\in S^1\times [-1,1]$, où $a$ correspond à $t=0$ et $c_\sur(\theta,t) = (-\theta,-t)$. On pose alors $f_a(\theta,t) = -\e^{i\pi t}$ sur ce voisinage et on prolonge par $1$ en dehors (voir Figure \ref{ann}).
\item Si $a \cap c_\sur (a) = \emptyset$, on choisit un voisinage tubulaire de $a$ disjoint de son conjugué de la forme $(\theta,t)\in S^1\times [-1,1]$, où $a$ correspond à $t=0$. On pose alors $f_a(\theta,t) = -\e^{i\pi t}$ sur ce voisinage et on prolonge par $\overline{f_a\circ c_\sur}$ sur son conjugué et par $1$ en dehors (voir Figure \ref{ann}).
  \end{enumerate}
\begin{figure}[!h]
    \centering
    \input{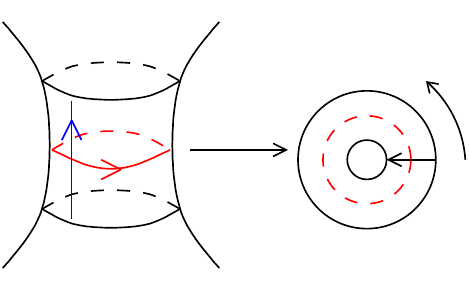_t}
    \caption{Construction de $f_a$}
    \label{ann}
  \end{figure}
Dans le premier cas, $f_a$ est dans $\raut{N}^+$ si et seulement si $\RR N$ n'est pas orientable au-dessus de $a$. Dans le second cas, $f_a$ n'est jamais dans $\raut{N}^+$. Dans le dernier cas, $f_a$ est toujours dans $\raut{N}^+$.
\end{Proposition}

\begin{proof}
Lorsque la courbe $a$ est une composante connexe de $\RR\surg$, il existe une base symplectique réelle $(a_i,b_i)_{i=1,\ldots,g}$ de $H_1(\surg,\ZZ)$ telle que $a_1 = a$ (voir Définition 4.1 de \cite{article1}). Le premier cas suit alors de la Proposition 4.3 de \cite{article1} (avec $f_1 = f_a$).

Lorsque la courbe $a$ est globalement stable et $\RR\surg\neq \emptyset$, il existe une base symplectique réelle $(a_i,b_i)_{i=1,\ldots,g}$ de $H_1(\surg,\ZZ)$ telle que $a_k = a$. Ce cas découle à nouveau de la Proposition 4.3 de \cite{article1}. Si la partie réelle de $\surg$ est vide, on peut construire une base symplectique $(a_i,b_i)_{i=1,\ldots,g}$ de $H_1(\surg,\ZZ)$ telle que $a_1 = a$, et $c_\sur(a_i) = a_i$, $c_\sur(b_i) = -b_i +a_i +a_0$, $i=1,\ldots g$, où $a_0 = -a_1-\ldots-a_g$ est une courbe globalement stable fixée. On peut alors démontrer que la Proposition 4.3 de \cite{article1} est encore vraie dans ce cas en utilisant exactement la même méthode, ce qui conclut ce cas.

Lorsque $a\cap c_{\sur}(a) = \emptyset$, on distingue trois sous cas. Tout d'abord, si la courbe est séparante, c'est-à-dire si $\surg\setminus\RR\surg$ n'est pas connexe, il existe une base symplectique réelle $(a_i,b_i)_{i=1,\ldots,g}$ de $H_1(\surg,\ZZ)$ telle que $a_k = a$. La Proposition 4.3 de \cite{article1} nous donne alors le résultat (car $f_a = f_k$). Puis, si la courbe n'est pas séparante, mais que $\RR\surg \neq\emptyset$, on fixe une base symplectique réelle $(a_i,b_i)_{i=1\ldots,g}$ de $H_1(\surg,\ZZ)$. On peut alors vérifier en utilisant le Lemme 4.4 de \cite{article1} que $f_a$ est homotope à un produit d'un nombre pair d'isomorphismes du type $f_{a_i}$, donc est toujours dans $\raut{N}^+$. Lorsque $\RR\surg = \emptyset$, on peut à nouveau montrer que $f_a$ est homotope à un produit d'un nombre pair d'automorphismes de $(N,c_N)$ renversant les orientations de $\Det$, donc que $f_a$ appartient à $\raut{N}^+$.
\end{proof}

\begin{Definition}\label{defd}
Soit $(N,c_N)$ un fibré en droites complexes muni d'une structure réelle au-dessus de $(\surg,c_\sur)$ et soit $\ud$ un quadruplet adapté à $(N,c_N)$. Si la courbe est séparante, c'est-à-dire si $\surg\setminus\RR\surg$ n'est pas connexe, et si aucune des composantes de $\RR N$ n'est orientable, alors on note $\DD_{\ud}(N)$ le fibré $\ZZ/2\ZZ$-principal trivial au-dessus de $\RR J(\surg)\times\surg^{\ud}$. Dans tous les autres cas, $\raut{N}^+$ est d'indice $2$ dans $\raut{N}$ et on considère le fibré $\ZZ/2\ZZ$-principal $\DD_{\ud}(N) = \ropcp / \raut{N}^+$ au-dessus de $\RR J(\surg)\times\surg^{\ud}$.

Si $D$ est un diviseur compatible avec $(N,c_N)$ tel que $D = D_{\ux}$ pour un certain $\ux\in\surg^{\ud}$, alors la restriction du fibré $\DD_{\ud}(N)$ à $\RR J(\surg)\times\{\ux\}$ ne dépend pas du représentant $\ux\in\surg^{\ud}$ de $D$ choisi. On note $\DD_D(N)$ le fibré $\ZZ/2\ZZ$-principal au-dessus de $\RR J(\surg)$ ainsi obtenu.
\end{Definition}

\begin{Notation}\label{notjet}
Soit $D$ un diviseur réel sur $(\surg,c_\sur)$. Pour un point $x\in\surg$, on notera $mult_D(x)\in\ZZ$ la multiplicité du point $x$ dans le diviseur $D$. Si $D$ est effectif, on note $\RR \jet_D$ le fibré vectoriel réel au-dessus de $\RR J(\surg)$ dont la fibre est donnée par
\[
\dd\bigoplus_{x\in D_{|\RR\surg}}\left(\bigoplus_{m=0}^{mult_D(x)-1}(T^*_x\RR\surg)^{\otimes m}\right) \oplus\bigoplus_{\{z,\overline{z}\}\in D} \left(\bigoplus_{m=0}^{mult_D(z)-1}\RR \bigoplus_{y\in\{z,\overline{z}\}}(T^*_y\surg)^{\otimes m}\right).
\]
Le $\RR$ dans la deuxième somme dénote toujours l'ensemble des points fixes de l'involution induite par $c_\sur$. 

Dans le cas général, on décompose $D$ de façon unique en $D^+-D^-$, où $D^+$ et $D^-$ sont effectifs, de supports inclus dans celui de $D$ et dont les points sont de multiplicités minimales, et on pose
\[
\RR \jet_D = \RR\jet_{D^+} \oplus (\RR\jet_{D^-})^*.
\]
\end{Notation}

Notons $\SSS_{2,s^{\pm}}$ les sous-groupes des groupes de permutations $\SSS_{s^{\pm}}$ engendrés par les transpositions $(2i-1\  2i)$, $1\leq i\leq s^{\pm}$, et par les permutations de la forme $2i-1\mapsto 2\sigma(i)-1$ et $2i\mapsto 2\sigma(i)$, pour tout $\sigma\in\SSS_{s^{\pm}}$. Par définition, le groupe produit $\SSS_{\ud} = \SSS_{r^+}\times\SSS_{r^-}\times \SSS_{2,s^+}\times\SSS_{2,s^-}$ agit naturellement et librement sur $\surg^{\ud}$. On notera $\surg^{(\ud)}$ le quotient $\surg^{\ud}/\SSS_{\ud}$. 

\begin{Definition}\label{defjet}
Soit $\ud \in\NN^4$. On note $\RR \jet_{(\ud)}$ le fibré vectoriel réel au-dessus de $\RR J(\surg)\times \surg^{(\ud)}$ dont la fibre au-dessus d'un point $(J,(\ux))\in\RR J(\surg)\times \surg^{(\ud)}$ est $\RR\jet_{D_{\ux}}$.  
\end{Definition}

 D'autre part, l'action de $\SSS_{\ud}$ sur $\surg^{\ud}$ se relève naturellement en une action sur $\DD_{\ud}(N)$. Le quotient $\DD_{(\ud)}(N)$ est un fibré $\ZZ/2\ZZ$-principal au-dessus de $\RR J(\surg)\times\surg^{(\ud)}$.

\begin{Rem}
Le fibré $\RR\jet_{(\ud)}$ est le quotient d'un fibré $\RR\jet_{\ud}$ par le relevé trivial de l'action de $\SSS_{\ud}$ sur $\RR J(\surg)\times\surg^{\ud}$. Autrement dit, l'action de $\SSS_{\ud}$ sur $\RR \jet_{\ud}$ est donnée par $(\sigma,(\ux,v))\in\SSS_{\ud}\times \RR\jet_{\ud}\mapsto (\sigma.\ux,v) \in \RR\jet_{\ud}$. Le fibré $\RR\jet_{(\ud)}$ n'est pas orientable en général.
\end{Rem}

\begin{Proposition}\label{dtrivial}
 Le fibré $\DD_{\ud}(N)$ est trivialisable au-dessus de $\RR J(\surg)\times\surg^{\ud}$. De plus $\riso{N}$ agit par automorphismes de fibré principal sur $\DD_{\ud}(N)$.

Le fibré $\DD_{(\ud)}(N)\otimes \det(\RR\jet_{(\ud)})$ est trivialisable au-dessus de $\RR J(\surg)\times\surg^{(\ud)}$.
\end{Proposition}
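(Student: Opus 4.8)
Le plan est de traiter séparément la trivialité de chacun des deux facteurs au-dessus de l'espace ordonné $\RR J(\surg)\times\surg^{\ud}$, puis de ramener la descente au quotient $\RR J(\surg)\times\surg^{(\ud)}$ à la comparaison de deux caractères $\SSS_{\ud}\to\ZZ/2\ZZ$. Comme $\RR J(\surg)$ est contractile, l'obstruction à trivialiser un fibré $\ZZ/2\ZZ$-principal sur $\RR J(\surg)\times\surg^{\ud}$ appartient à $H^1(\surg^{\ud},\ZZ/2\ZZ)=\Hom(\pi_1(\surg^{\ud}),\ZZ/2\ZZ)$, et tout revient donc à calculer des monodromies le long de lacets.

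Pour la trivialité de $\DD_{\ud}(N)$, le cas déclaré trivial dans la Définition \ref{defd} est immédiat. Sinon $\DD_{\ud}(N)=\ropcp/\raut{N}^+$, et je calculerais la monodromie le long d'un lacet en suivant un opérateur polarisé associé à $D_{\ux}$ (cf. Lemme \ref{sectionpol}) pendant que les points du diviseur parcourent des lacets de $\surg$. La première étape est de montrer que déplacer un point le long d'un lacet $\gamma$ modifie l'opérateur, à homotopie près, par l'automorphisme $f_\gamma$ de la Proposition \ref{rapp}, de sorte que la contribution de ce point est la classe $[f_\gamma]\in\raut{N}/\raut{N}^+=\ZZ/2\ZZ$. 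Les points complexes relèvent du troisième cas de la Proposition \ref{rapp} (le lacet étant disjoint de $\RR\surg$) et contribuent donc toujours trivialement. Pour les points réels sur une composante $(\RR\surg)_s$, c'est le premier cas qui intervient, et $[f_\gamma]$ est triviale exactement lorsque $\RR N$ n'est pas orientable au-dessus de $(\RR\surg)_s$. L'argument clé est que, la diagonale épaisse étant retirée, des points distincts d'une même composante ne peuvent se croiser et ont donc tous le même nombre d'enroulements : leur contribution totale est $[f_\gamma]^{km}$, où $m$ est cet enroulement commun et $k$ le nombre de points du diviseur sur $(\RR\surg)_s$. Or la compatibilité de $D$ force la parité de $k$ à valoir $w_1(\RR N)([\RR\surg]_s)$ ; si $\RR N$ est orientable alors $k$, donc l'exposant $km$, est pair, et sinon $[f_\gamma]$ est déjà triviale. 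La monodromie est ainsi toujours nulle.

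Pour l'action de $\riso{N}$, le sous-groupe $\raut{N}^+$ est le noyau du morphisme $\raut{N}\to\ZZ/2\ZZ$ donné par l'action sur les orientations de $\Det$ ; un tel noyau est invariant par conjugaison, de sorte que l'action de $\riso{N}$ sur $\ropcp$ — qui existe, même si ce n'est pas par automorphismes de fibré principal, comme le signale la Remarque précédente — préserve les $\raut{N}^+$-orbites et passe au quotient en une action par automorphismes de fibré $\ZZ/2\ZZ$-principal sur $\DD_{\ud}(N)$, relevant l'action sur la base. Pour le dernier point, $\DD_{\ud}(N)$ est trivial par ce qui précède et $\det(\RR\jet_{\ud})$ l'est aussi : tous les points étant de multiplicité $\pm1$, chaque facteur de $\RR\jet_{\ud}$ n'utilise que la puissance $m=0$ et ne fait intervenir aucun fibré cotangent. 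Le fibré descendu $\DD_{(\ud)}(N)\otimes\det(\RR\jet_{(\ud)})$ est alors classifié par la somme des caractères $\chi_{\DD},\chi_{\jet}:\SSS_{\ud}\to\ZZ/2\ZZ$ mesurant le défaut d'invariance des sections trivialisantes, et il suffit de vérifier $\chi_{\DD}=\chi_{\jet}$ sur les générateurs de $\SSS_{\ud}=\SSS_{r^+}\times\SSS_{r^-}\times\SSS_{2,s^+}\times\SSS_{2,s^-}$. Du côté du jet, le calcul est purement multilinéaire : une transposition de $\SSS_{r^\pm}$ échange deux droites réelles et donne $-1$ ; une transposition de deux paires conjuguées échange deux plans et donne $+1$ ; la transposition interne $(2i-1\ 2i)$ agit sur le plan fixe $\{(a,\overline a)\}$ par la réflexion $a\mapsto\overline a$, et donne donc $-1$. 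Du côté de $\DD$, $\chi_{\DD}(\sigma)$ est le produit des classes $[f_\gamma]$ associées aux arcs réalisant $\sigma$, de nouveau via la Proposition \ref{rapp}.

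Le point délicat sera la comparaison de $\chi_{\DD}$ et $\chi_{\jet}$ sur les facteurs $\SSS_{2,s^\pm}$. La transposition interne $z\leftrightarrow\overline z$ ne peut être réalisée par une collision le long de $\RR\surg$ (interdite par la diagonale épaisse) et doit être acheminée à travers une composante de $\surg\setminus\RR\surg$ ; il faudra vérifier que le signe correspondant du côté de $\DD$ vaut bien $-1$, en accord avec la réflexion obtenue sur le jet. De même, les transpositions de points réels sur des composantes distinctes et les permutations de paires demandent un choix soigneux des arcs réalisant $\sigma$ et une réduction par homotopie aux automorphismes $f_\gamma$ de la Proposition \ref{rapp}. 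Une fois ces signes identifiés, l'égalité $\chi_{\DD}=\chi_{\jet}$ entraîne que la somme des deux caractères est triviale, et donc que $\DD_{(\ud)}(N)\otimes\det(\RR\jet_{(\ud)})$ est trivialisable.
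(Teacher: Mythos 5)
Votre stratégie est celle du texte : ramener, par contractilité de $\RR J(\surg)$, la trivialité de $\DD_{\ud}(N)$ à un calcul de monodromie le long de lacets de $\surg^{\ud}$, identifier cette monodromie à un produit de classes $[f_\gamma]\in\raut{N}/\raut{N}^+$ via la Proposition \ref{rapp}, conclure par la condition de parité imposée par la compatibilité du diviseur, puis comparer le caractère de $\SSS_{\ud}$ ainsi obtenu à la signature pour la descente à $\surg^{(\ud)}$. Votre argument de normalité de $\raut{N}^+$ (noyau du morphisme donné par l'action sur les orientations de $\Det$, donc stable par conjugaison par tout élément de $\riso{N}$) pour l'équivariance est correct et plus explicite que le texte. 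En revanche, le c\oe ur de la démonstration --- le fait que le transport d'un opérateur polarisé le long d'un déplacement des points du diviseur soit, à homotopie près, le produit des $f_{\gamma_i}$ --- est précisément ce que vous reportez. C'est là que le texte investit tout son effort : la construction explicite des fonctions de recollement $\phi^t$, $\eta^t_{i,j}$, $\psi^t_{i,j}$, $\theta^t_{i,j}$ avec le comptage de signes $(-1)^{n_i^t}$. Sans cette étape, ni la trivialité ni le calcul du caractère $\chi_{\DD}$ sur $\SSS_{\ud}$ --- que vous signalez vous-même comme le point délicat et laissez ouvert --- ne sont établis ; or l'égalité $\chi_{\DD}=\varepsilon$ sur les générateurs de $\SSS_{2,s^{\pm}}$ et de $\SSS_{r^{\pm}}$ est exactement le contenu de la seconde assertion de la Proposition.

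Un point est de plus erroné tel qu'écrit : vous affirmez que les points complexes relèvent du troisième cas de la Proposition \ref{rapp} au motif que le lacet est disjoint de $\RR\surg$. Mais $a\cap\RR\surg=\emptyset$ n'entraîne pas $a\cap c_\sur(a)=\emptyset$ : lorsque la courbe n'est pas séparante, un point $z\in\surg\setminus\RR\surg$ peut parcourir un lacet homologue à une courbe globalement stable par $c_\sur$, pour laquelle le deuxième cas de la Proposition \ref{rapp} donne $f_a\notin\raut{N}^+$ --- la contribution d'un seul point serait alors non triviale. La conclusion est sauvée uniquement parce que le point conjugué $c_\sur(z)$ parcourt simultanément le lacet conjugué et apporte un second facteur $[f_a]$, d'où $[f_a]^2=1$ ; c'est la distinction de sous-cas que fait le texte (soit l'image des deux points forme un seul lacet globalement stable, soit deux lacets complexes conjugués disjoints) et qu'il faut reproduire dans votre argument.
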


\begin{proof}
Commençons par la première partie de la Proposition. Fixons tout d'abord $J\in\RR J(\surg)$. Prenons un lacet $(\ux_t)_{t\in [0,1]}$ dans $\surg^{\ud}$. Il nous suffit de vérifier la trivialité de $\DD_{\ud}(N)$ au-dessus des lacets de la forme $\left(\{J\}\times (\ux_t)\right)_{t\in[0,1]}$ car $\RR J(\surg)$ est contractile. De plus, comme $\surg^{\ud}$ est un produit, il nous suffit de considérer deux cas différents : soit tous les points de $(\ux_t)_{t\in [0,1]}$ sont fixes sauf ceux qui sont sur une composante de la partie réelle de $(\surg,c_\sur)$, soit seuls les points complexes conjugués de $(\ux_t)_{t\in [0,1]}$ bougent.

Prenons le premier cas. Supposons pour simplifier les notations que tous les points de $(\ux_t)_{t\in [0,1]}$ se trouvent sur une composante particulière de $\RR\surg$. Choisissons une orientation de cette composante et numérotons ces points $x^t_1,\ldots,x^t_r$ dans un ordre cyclique donné par cette orientation. Prenons des lacets de coordonnées $J$-holomorphes $\xi_i^t : U_i^t \rightarrow D$, $t\in [0,1]$, $\ZZ/2\ZZ$-équivariantes et centrées en $x_i^t$. Quitte à perturber $(\ux_t)_{t\in [0,1]}$, on peut supposer que les $U_i^t$ ne s'intersectent pas à un $t$ fixé. Pour tout $t\in [0,1]$, notons $(N_t,\DB_t,c_{N_t})$ les fibrés holomorphes obtenus par recollement de $\surg\setminus \ux_t \times \TCC$ et $U_i^t\times\TCC$ grâce aux applications
\[
\begin{array}{c c c}
(\surg\setminus\ux_t)\cap U_i^t\times \TCC & \rightarrow & U_i^t\cap (\surg\setminus\ux_t)\times \TCC\\
(\xi_i^t,v) & \mapsto & (\xi_i^t,(\xi_i^t)^{m_i}v),
\end{array}
\]
où $m_i\in\{-1,1\}$ est donné par le signe de $x_i^t$ dans $D_{\ux_t}$. De plus, les opérateurs $\DB_t$ viennent avec des sections méromorphes réelles évidentes. 

Fixons un opérateur $\DB\in\ropj$ associé à $\ux_0$ ainsi qu'une polarisation de $\DB$ et un isomorphisme entre $(N,\DB,c_N)$ et $(N_0,\DB_0,c_{N_0})$ préservant les polarisations. Si on trivialise la famille de fibrés en droites complexes $(N_t,c_{N_t})_{t\in [0,1]}$, on obtient une famille continue d'opérateurs $(\DB_t')_{t\in [0,1]}$ sur $N$ associés à $(D_{\ux_t})_{t\in [0,1]}$ et polarisés, définie à homotopie près. Il nous faut alors comparer $\DB_0' = \DB$ à $\DB_1'$. Autrement dit, il nous faut calculer l'automorphisme de $(N_0,c_{N_0})$ induit par la trivialisation de $(N_t,c_{N_t})$. C'est ce que nous faisons maintenant. 

Pour tout $t\in [0,1]$, un isomorphisme entre $(N_0,c_{N_0})$ et $(N_t,c_{N_t})$ est décrit dans les trivialisations par les applications suivantes
\[
\begin{array}{c c c}
(N_0)_{|U_i^0\cap U_j^t} & \rightarrow & (N_t)_{U_j^t}\\
(z,v) & \mapsto & (z,\eta_{i,j}^t(z)v)
\end{array}
\]
\[
\begin{array}{c c c}
(N_0)_{|U_i^0\setminus \{x_j^t\}} & \rightarrow & (N_t)_{\surg\setminus \{x_j^t\}}\\
(z,v) & \mapsto & (z,\psi_{i,j}^t(z)v)  
\end{array}
\]
\[
\begin{array}{c c c}
(N_0)_{|(\surg\setminus \{x_i^0\})\cap U_j^t} & \rightarrow & (N_t)_{U_j^t}\\
(z,v) & \mapsto & (z,\theta_{i,j}^t(z)v)
\end{array}
\]
\[
\begin{array}{c c c}
(N_0)_{|\surg\setminus\{\ux_0,\ux_t\}} & \rightarrow & (N_t)_{\surg\setminus\ux_t}\\
(z,v) & \mapsto & (z,\phi^t(z) v)
\end{array}
\]
Il nous faut définir $\phi^t(z)$, $\eta_{i,j}^t(z)$, $\psi_{i,j}^t(z)$ et $\theta_{i,j}^t(z)$. Nous voulons de plus que pour $t = 0$ on ait $\eta_{i,i}^0(z) = 1$, $\psi_{i,i}^0(z) = (\xi_i^0)^{-m_i}(z)$, $\theta_{i,i}^0(z) = (\xi_i^0)^{m_i}(z)$ et $\phi^0(z) = 1$. Supposons pour simplifier la démonstration que les disques $U_i^t$ sont disjoints à chaque temps $t$ et que un seul disque $U_i^t$ à la fois peut intersecter $U_j^0$ pout chaque $t$. Notons $n_i^t\in \NN$ le nombre de disques $U_j^t$ disjoints de $U_i^0$ au temps $t$ l'ayant touché à un temps précédent. Ainsi, on a $n_i^0 = 0$ et $n_i^1 = r$. On pose alors $\eta_{i,j}^t(z) = (-1)^{n_i^t}$. On définit aussi $\psi_{i,j}^t(z) = (-1)^{n_{i}^t}(\xi_j^t)^{-m_j}(z)$ sur $(U_i^0\setminus \{x_j^t\})\cap U_j^t$, prolongée sur tout $U_i^0\setminus \{x_j^t\}$ de sorte que lorsque $U_i^0\cap U_j^t = \emptyset$, $\psi_{i,j}^t (z) = (-1)^{n_{i}^t}$. Puis $\theta_{i,j}^t(z) = (-1)^{n_i^t}(\xi_i^0)^{m_i}(z)$ sur $(U_i^0\setminus \{x_i^0\})\cap U_j^t$, prolongée sur tout $(\surg\setminus \{x_i^0\})\cap U_j^t$ de sorte que lorsque $U_i^0\cap U_j^t = \emptyset$, $\theta_{i,j}^t(z) = (-1)^{n_i^t-1}$. Enfin, on pose $\phi^t(z) = \psi_{i,j}^t(z)(\xi_i^0)^{m_i}(z)$ sur $U_i^0\setminus \{x_j^t\}$, $\phi^t(z) = \theta_{i,j}^t(z)(\xi_j^t)^{-m_j}(z)$ sur $U_j^t\setminus \{x_i^0\}$, prolongée par $1$ en dehors d'un voisinage tubulaire de $\RR\surg$ contenant tous les disques $U_i^t$. Ce dernier prolongement est possible car l'indice de $\phi^t(z)$ le long des deux lacets formés de $\RR\surg$ où l'on a remplacé les intersections $\RR\surg\cap U_i^0$ et $\RR\surg\cap U_i^t$ par les bords supérieurs, respectivement inférieurs, des $U_i^0$ et $U_i^t$ est nul pour tout temps $t$ (voir Figures \ref{dfig1} et \ref{dfig2}).

\begin{figure}[h]
  \centering
  \input{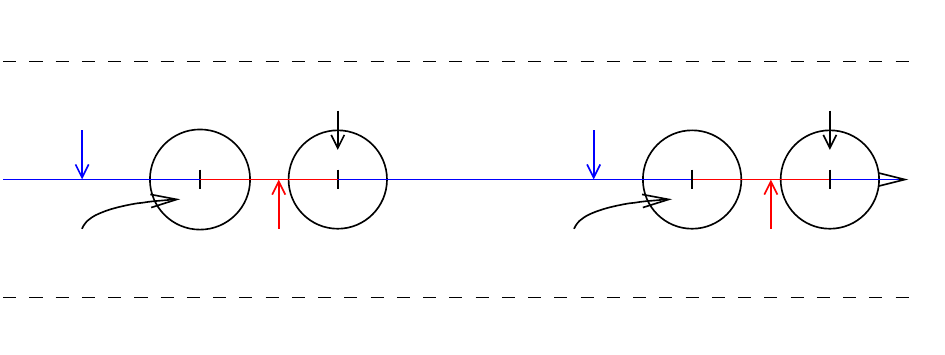_t}
  \caption{Construction de $\phi_t$ lorsque $n_i^t = 1$}
  \label{dfig1}
\end{figure}
\begin{figure}[h]
  \centering
  \input{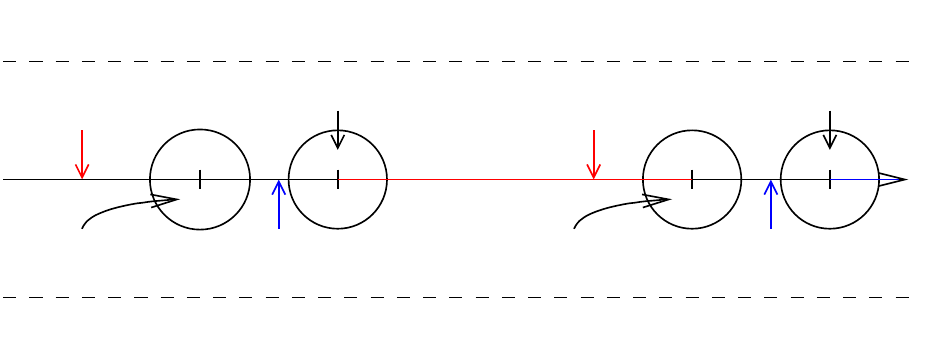_t}
  \caption{Construction de $\phi_t$ lorsque $n_i^t = 2$}
  \label{dfig2}
\end{figure}

Par construction, ces fonctions sont compatibles avec les changements de cartes et fournissent bien une trivialisation de $(N_t,c_{N_t})_{t\in [0,1]}$. Lorsque $t=1$, on obtient un automorphisme de $(N_0,c_{N_0})$ donné par $\phi_1$. Cette fonction vaut $1$ en dehors d'un voisinage de la composante de $\RR\surg$ considérée, son signe sur celle-ci est égal à $(-1)^r$ et son indice transversalement à celle-ci vaut $r \mod 2$. Ainsi, si $\RR N$ est orientable sur cette composante, $\phi_1$ y est positif et son indice transversalement à cette composante est pair. Donc $\phi_1\in\raut{N}^+$ d'après la Proposition \ref{rapp}.

Supposons maintenant que ce sont des points complexes conjugués de $(\ux_t)_{t\in [0,1]}$ qui bougent. On peut supposer tout d'abord qu'une seule de ces paires est mobile et subdiviser à nouveau en deux cas : soit l'image de ces points forme un lacet sur $\surg$, globalement stable par $c_\sur$, soit elle est formée de deux composantes complexes conjuguées. En utilisant la même technique que dans le cas précédent et la Proposition \ref{rapp}, on montre le résultat voulu dans ce cas ci.

Passons maintenant à l'action de $\SSS_{\ud}$. Remarquons tout d'abord que la restriction à $\{J\}\times\surg^{(\ud)}$ du fibré $\det(\RR \jet_{(\ud)})$ est isomorphe en tant que fibré $\ZZ/2\ZZ$-principal à $\surg^{\ud}/\SSS_{\ud}^+\rightarrow \surg^{(\ud)}$, où $\SSS_{\ud}^+$ est le noyau de la signature sur $\SSS_{\ud}$. Or, la démonstration du premier point montre que pour tout $\sigma\in\SSS_{\ud}$ et $\ux\in\surg^{\ud}$ tels que $\ux$ et $\sigma.\ux$ sont dans la même composante connexe de $\surg^{\ud}$, l'automorphisme de $\DD_{\ux}(N)$ induit par la trivialisation de $\DD_{\ud}(N)$ au-dessus d'un chemin joignant $\ux$ à $\sigma.\ux$ ne préserve les orientations que lorsque la signature de $\sigma$ vaut $1$. Ce qui montre le deuxième point.
\end{proof}

\begin{Rem}
  Notons que la démonstration de la Proposition \ref{dtrivial} montre que le fibré $\ropcp$ n'est en général pas trivial. En effet, prenons $(\ux_t)_{t\in [0,1]}$ un lacet dans $\surg^{\ud}$ tel que seuls les points sur une courbe simple orientée $a\subset \surg$ bougent, chacun d'entre eux réalisant exactement un tour de $a$. Si l'on note $r$ le nombre des points mobiles de $(\ux_t)$, alors, en reprenant les notations de la Proposition \ref{rapp}, la monodromie de $\ropcp$ associée à ce lacet est homotope à $(f_a)^r$ dans le cas où $a$ est une composante de $\RR\surg$ ou bien globalement stable, et par $(f_a)^{\frac{r}{2}}$ si $a\cap c_\sur(a) = \emptyset$.

  Insistons aussi sur le fait que si $(\ux_t)_{t\in [0,1]}$ est un chemin de points de $\surg^{\ud}$ où l'on ne fait qu'échanger deux points complexes conjugués, on a $D_{\ux_0} = D_{\ux_1}$, mais la trivialisation de $\DD_{\ud}(N)$ le long de $(\ux_t)_{t\in [0,1]}$ ne donne pas l'isomorphisme trivial $\DD_{\ux_0}(N) = \DD_{D_{\ux_0}}(N) = \DD_{D_{\ux_1}}(N) = \DD_{\ux_1}(N)$.
\end{Rem}

\subsubsection{Action d'un automorphisme général : \'Enoncé}

Rappelons que l'on note $\PV = \dd\bigotimes_{i=1}^kPin^+((\RR N)_i)$ si $\rang(N)\geq 2$. Lorsque $(N,c_N)$ est un fibré en droites complexes, on adopte la convention que $\PV = \RR$. Nous avons défini au \S \ref{diviseur} plusieurs fibrés que nous allons utiliser maintenant pour décrire l'action d'un élément de $\riso{N}$ sur les orientations de $\Det$. En particulier, si $\ud\in\NN^4$ est compatible avec $(N,c_N)$, nous avons défini un fibré $\ZZ/2\ZZ$-principal $\DD_{(\ud)}(N)$ et un fibré vectoriel réel $\RR\jet_{(\ud)}$ au-dessus de $\RR J(\surg)\times \surg^{(\ud)}$ (voir Définitions \ref{defd} et \ref{defjet}), et si $D$ est un diviseur compatible avec $(N,c_N)$ nous avons défini un fibré $\ZZ/2\ZZ$-principal $\DD_{D}(N)$ et un fibré vectoriel réel $\RR\jet_{D}$ au-dessus de $\RR J(\surg)$ (voir Définition \ref{defd} et Notation \ref{notjet}). Nous introduisons un dernier fibré $T_{(\ud)}$ qui est le fibré tautologique sur $\surg^{(\ud)}$ de fibre $\dd\bigotimes_{i=1}^{r^+} T_{x^+_i}^*\RR\surg$ au-dessus du point $\ux = (x^+_1,\ldots,x^+_{r^+},x^-_{1},\ldots,x^-_{r^-},\uz^+,\uz^-)$.

Tous les isomorphismes entre fibrés en droites réelles que nous considérons sont définis à homotopie près, et nous ne ferons pas de distinction entre fibré en droites réelles et fibré $\ZZ/2\ZZ$-principal associé. Nous omettons de le préciser dans la suite pour soulager les énoncés.

\begin{Theoreme}\label{totalaction}
Soit $(N,c_N)$ un fibré vectoriel complexe sur $(\surg,c_\sur)$.
\begin{enumerate}
\item Soit $D$ un diviseur associé à $(\det(N),c_{\det(N)})$. Le fibré $\Det$ au-dessus de $\rop \xrightarrow{\pi} \RR J(\surg)$ est canoniquement isomorphe à
\[
\PV \otimes \pi^*\DD_D(N) \otimes \det(H^1(\surg,\RR)_{-1})^{\otimes\rang(N)}\otimes\det(\RR\jet_{D})\dd\bigotimes_{x\in D_{|\RR\surg}} (T_x^*\RR\surg)^{\otimes\max(0,mult_D(x))}.
\]
\item Soit $\ud$ un quadruplet compatible avec $(N,c_N)$. Le fibré $\Det$ au-dessus de $pr_1^* \rop \xrightarrow{\pi} \RR J(\surg)\times \surg^{(\ud)}$ est canoniquement isomorphe à
\[
\PV \otimes \pi^*\left(\DD_{(\ud)}(N) \otimes\det(\RR\jet_{(\ud)})\right)\otimes (pr_2\circ\pi)^*T_{(\ud)}\otimes \det(H^1(\surg,\RR)_{-1})^{\otimes\rang(N)}.
\]
\end{enumerate}
\end{Theoreme}

\begin{Rem}
Précisons ce que nous entendons par canonique ici : si $(\Phi,\varphi):(N,c_N)\rightarrow (N',c_{N'})$ est un isomorphisme entre deux fibrés vectoriels complexes sur $(\surg,c_\sur)$, alors le diagramme suivant doit commuter
\[
\xymatrix{\Det \ar[r] \ar[d]_{\Phi} & \parbox{4in}{\raggedright$\PV \otimes \pi^*\left(\DD_{(\ud)}(N)\otimes \det(\RR\jet_{(\ud)})\right)\otimes(pr_2\circ\pi)^*T_{(\ud)}  \linebreak  \otimes \det(H^1(\surg,\RR)_{-1})^{\otimes\rang(N)} \ar[d]^{\Phi\otimes\Phi\otimes \varphi^* \otimes \d\varphi}$}\\
\Det[N'] \ar[r] & \parbox{4in}{\raggedright$\PV[N'] \otimes (\pi')^*\left(\DD_{(\ud)}(N')\otimes\det(\RR\jet_{(\ud)})\right) \otimes(pr_2'\circ\pi')^*T_{(\ud)} \linebreak \otimes \det(H^1(\surg,\RR)_{-1})^{\otimes\rang(N)}.$}
}
\]
\end{Rem}

Le Théorème \ref{totalaction} permet donc de décomposer plus finement l'action d'un automorphisme sur les orientations du fibré déterminant. Tous les termes obtenus font intervenir des objets topologiques simples, sauf le fibré $\DD_{(\ud)}(N)$. Nous n'avons pas de description simple de ce dernier fibré dans le cas général. Nous l'expliciterons dans deux cas particuliers au \S \ref{corpart}.

Notons $r^+_{min}$ le nombre de composantes connexes de $\RR N$ qui ne sont pas orientables. On appellera le quadruplet $\ud_{min} = \left(r^+_{min},0,\max(0,\deg(N)-r^+_{min}),-min(0,\deg(N)-r^+_{min})\right)$ minimal. L'ensemble $\surg^{(\ud_{min})}$ est formé des diviseurs ayant exactement un point de multiplicité $1$ sur chaque composante connexe de $\RR\surg$ au-dessus de laquelle $\RR N$ n'est pas orientable. Dans ce cas, les fibrés $\DD_{\ud_{min}}(N)$ et $\RR\jet_{\ud_{min}}$ sont orientables et fixer une orientation au-dessus d'une composante connexe de $\surg^{\ud_{min}}$ en fixe une pour toutes les autres. Ainsi, pour tout $(\Phi,\varphi)\in\riso{N}$, on peut considérer l'action de $\Phi$ sur les deux orientations de $\DD_{\ud_{min}}(N)$ que l'on notera $\varepsilon(\Phi_{\DD_{min}})$. L'action de $(\Phi,\varphi)$ sur celles de $\RR\jet_{\ud_{min}}$ est notée $\varepsilon(\varphi_{\ud_{min}})$. Rappelons que comme dans le \S \ref{parpin} nous notons $\sigma_{\varphi}^-$ la permutation induite par $\varphi$ sur les orientations des composantes de $\RR\surg$ au-dessus desquelles $\RR N$ n'est pas orientable. Rappelons aussi que nous avons noté $\varepsilon(\Phi_{\PPP^+})$ le signe de l'action de $(\Phi,\varphi)$ sur $\PV$ et que $\varphi^*$ est l'automorphisme de $H^1(\surg,\RR)_{-1}$ induit par $\varphi$. Le Corollaire \ref{reponse} est l'application directe du Théorème \ref{totalaction} au cas de $\ud_{min}$.

\begin{Corollaire}\label{reponse}
Soit $(N,c_N)$ un fibré vectoriel complexe sur $(\surg,c_\sur)$ de partie réelle non vide. Soit $(\Phi,\varphi)\in\riso{N}$. Son action sur les orientations du fibré $\Det$ est donné par le produit $\varepsilon(\Phi_{\PPP^+})\varepsilon(\Phi_{\DD_{min}})\varepsilon(\varphi_{\ud_{min}})\varepsilon(\sigma_{\varphi}^-)\det(\varphi^*)^{\rang(N)}$.
\end{Corollaire}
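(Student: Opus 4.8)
Le plan est d'appliquer directement le Théorème \ref{totalaction} au quadruplet minimal $\ud_{min}$, puis de lire la contribution de $(\Phi,\varphi)$ sur chacun des facteurs du produit tensoriel obtenu. Je commencerais par fixer un représentant $\ux\in\surg^{\ud_{min}}$ et poser $D = D_{\ux}$. Par définition de $\ud_{min}$, le diviseur $D$ possède exactement un point de multiplicité $+1$ sur chaque composante de $\RR\surg$ au-dessus de laquelle $\RR N$ n'est pas orientable, aucun point réel de multiplicité $-1$, et tous ses points sont de multiplicité $\pm 1$; en particulier les fibrés $\DD_D(N)$ et $\RR\jet_D$ sont orientables. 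La partie (1) du Théorème \ref{totalaction} fournit alors l'isomorphisme canonique
\[
\Det \cong \PV \otimes \pi^*\DD_D(N) \otimes \det(H^1(\surg,\RR)_{-1})^{\otimes\rang(N)}\otimes\det(\RR\jet_{D})\dd\bigotimes_{x\in D_{|\RR\surg}} (T_x^*\RR\surg)^{\otimes\max(0,mult_D(x))}.
\]

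Ensuite j'identifierais la contribution de $(\Phi,\varphi)$ à chacun des cinq facteurs. L'action de $\Phi$ sur $\PV$ est par définition $\varepsilon(\Phi_{\PPP^+})$. Comme $D$ est minimal, le fibré $\DD_{\ud_{min}}(N)$ est orientable sur tout $\surg^{\ud_{min}}$, de sorte que fixer une orientation sur une composante connexe les fixe toutes; l'action de $\Phi$ sur $\DD_D(N)$ se lit alors comme le signe $\varepsilon(\Phi_{\DD_{min}})$. D'après le Lemme \ref{isopart}, l'action de $\varphi$ sur $\det(H^1(\surg,\RR)_{-1})^{\otimes\rang(N)}$ se fait via $(\varphi^{-1})^*$, dont le signe du déterminant, élevé à la puissance $\rang(N)$, vaut $\det(\varphi^*)^{\rang(N)}$. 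L'action de $(\Phi,\varphi)$ sur $\det(\RR\jet_D)$ est par définition $\varepsilon(\varphi_{\ud_{min}})$. Enfin, comme les points réels de $D$ de multiplicité positive sont exactement ceux situés sur les composantes de $\RR\surg$ où $\RR N$ n'est pas orientable, le dernier facteur se réduit à $\dd\bigotimes_{w_1(\RR N)([\RR\surg]_i)=1} (T^*_{x_i}\RR\surg)$, et la Remarque suivant la Proposition \ref{pinori} montre que l'action de $\varphi$ sur ses orientations vaut $\varepsilon(\sigma_\varphi^-)$.

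Le produit des cinq signes donne alors exactement $\varepsilon(\Phi_{\PPP^+})\varepsilon(\Phi_{\DD_{min}})\varepsilon(\varphi_{\ud_{min}})\varepsilon(\sigma_\varphi^-)\det(\varphi^*)^{\rang(N)}$, c'est-à-dire l'assertion voulue. Le point essentiel à soigner est que l'isomorphisme du Théorème \ref{totalaction} est canonique, donc $\riso{N}$-équivariant au sens du diagramme de la Remarque qui le suit : c'est précisément cette équivariance qui permet de factoriser l'action totale sur les orientations de $\Det$ en le produit des actions sur chacun des facteurs, puis de les évaluer séparément. Le seul passage un peu délicat sera de justifier que l'on peut évaluer le facteur $\DD_D(N)$ et le facteur $\det(\RR\jet_D)$ même lorsque $\varphi_* D \neq D$; cela ne pose pas de difficulté puisque $\varphi$ préserve $w_1(\RR N)$ et envoie donc un diviseur minimal sur un diviseur minimal, et que l'orientabilité globale de $\DD_{\ud_{min}}(N)$ et de $\RR\jet_{\ud_{min}}$ sur $\surg^{\ud_{min}}$ rend les signes $\varepsilon(\Phi_{\DD_{min}})$ et $\varepsilon(\varphi_{\ud_{min}})$ bien définis.
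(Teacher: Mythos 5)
Votre démonstration est correcte et suit essentiellement la même voie que l'article, qui présente le Corollaire \ref{reponse} comme l'application directe du Théorème \ref{totalaction} au quadruplet $\ud_{min}$, chaque facteur de la décomposition contribuant le signe que vous indiquez. La seule nuance est que vous partez du point (1) avec un diviseur fixé puis devez traiter à part le cas $\varphi_*D\neq D$, alors qu'il suffit d'invoquer le point (2), énoncé au-dessus de $\RR J(\surg)\times\surg^{(\ud_{min})}$ précisément pour supprimer cette hypothèse : les signes $\varepsilon(\Phi_{\DD_{min}})$ et $\varepsilon(\varphi_{\ud_{min}})$ y sont bien définis grâce à l'orientabilité de $\DD_{(\ud_{min})}(N)$ et de $\RR\jet_{(\ud_{min})}$, exactement comme vous le notez en conclusion.
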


%C'est trèèèèèès piègeux mais vrai. La permutation sur les points complexes conj n'apparait pas qd la courbe n'est pas sép car elle se compense. Qd la courbe est sép, l'échange des composantes influe sur T_{\ud}
%On prend (\ux) dans \surg^{(\ud)}. Il existe alors \sigma\in\SSS_{\ud} tq \ux et \sigma.\varphi(\ux) sont dans la même cc de $\surg^{\ud}$. On trivialise D\otimes \RRjet le long d'un chemin reliant (\ux) et (\sigma\varphi(\ux)) puis on relève. Pour calculer l'action de \Phi sur D\otimes\RRjet en bas, il suffit de calculer l'action de \sigma.\Phi sur chacun des deux fibrés en haut : sur D n obtient ce qu'on veut et sur \rrjet trivial. \sigma_\varphi^- vient de T_\ud .

\begin{Rem}
  On peut expliciter un peu plus le terme $\varepsilon(\varphi_{\ud_{min}})$ en termes topologiques. En effet, lorsque la courbe n'est pas séparante, il est égal à la signature de la permutation $\varphi^-_{\RR\sur}$ induite par $\varphi$ sur l'ensemble des composantes connexes de $\RR\surg$ au-dessus desquelles $\RR N$ n'est pas orientable. Lorsque la courbe est séparante, il est égal à cette même signature si et seulement si $\deg(N)-r_{min}^+$ est un multiple de $4$ ou si $\varphi$ préserve les deux composantes connexes de $\surg\setminus\RR\surg$.
\end{Rem}

Avant de passer à la démonstration du Théorème \ref{totalaction} qui occupe le \S \ref{transfopar}, énonçons le Lemme \ref{isomdetd}.

\begin{Lemme}\label{isomdetd}
Soient $(N,c_N)$ et $(N',c_{N'})$ deux fibrés vectoriels complexes sur $(\surg,c_\sur)$ et $D$ un diviseur réel associé à $(\det(N),c_{\det(N)})$ et $(\det(N'),c_{\det(N')})$. Une orientation sur $\Det\otimes \DD_D(N)\otimes \PV$ en induit naturellement une sur $\Det[N']\otimes \DD_D(N')\otimes \PV[N']$.

Lorsque $(N,c_N) = (\TCC,conj)$, il existe un isomorphisme canonique
\[
\Det[\TCC]\otimes\DD_0(\TCC) = \det(\HH^1(\surg,\RR)_{-1}).
\]
\end{Lemme}

\begin{proof}
  La première affirmation est une reformulation de la Proposition \ref{lemrangun} dans le cas d'un automorphisme relevant l'identité. En effet, prenons un isomorphisme $f : (N',c_{N'})\rightarrow (N,c_{N})$. Celui-ci induit naturellement un isomorphisme
  \[
\Det\otimes \DD_D(N)\otimes \PV = \Det[N']\otimes \DD_D(N')\otimes \PV[N'].
\]
Il nous suffit de vérifier que si $g : (N',c_{N'})\rightarrow (N,c_{N})$ est un autre isomorphisme, alors $g\circ f^{-1}\in \raut{N}$ préserve les orientations de $\Det\otimes\DD_D(N)\otimes\PV$. En effet, $g\circ f^{-1}$ relève $\id_{\sur}$ donc d'après la Proposition \ref{lemrangun} et la définition de $\DD_D(N)$, $g\circ f^{-1}$ préserve les orientations de $\Det$ si et seulement si $\det(g\circ f^{-1})\otimes (g\circ f^{-1})$ préserve les orientations de $\DD_D(N)\otimes \PV$.

Le second isomorphisme est une reformulation du Lemme \ref{isopart}. En effet, l'isomorphisme $\Det[\TCC]_{|\RR J(\surg)} = \det(\HH^1(\surg,\RR)_{-1})$ induit un isomorphisme $t : \Det[\TCC] \rightarrow \det(\HH^1(\surg,\RR)_{-1})$. Toutefois celui-ci ne respecte pas l'action de $\raut{\TCC}$. On obtient l'isomorphisme voulu de la façon suivante. \'Etant donnée une section polarisée $\DB^+$ de $\RR\CCCC_0(\TCC)$, il existe une famille $F : \RR J(\surg)\rightarrow \raut{\TCC}$ telle que $F^*\DB^+ = \DB_{can}^+$, où $\DB_{can,J} = \frac{1}{2}(\d + i\circ\d \circ J)$ et est munie de la polarisation triviale. De plus, $F$ est uniquement définie à homotopie près. On pose alors
\[
\begin{array}{c c c}
  \DD_0(\TCC) & \rightarrow & \Det[\TCC]^*\otimes \det(\HH^1(\surg,\RR)_{-1}) \\
   \DB^+  & \mapsto & v^*\otimes t\circ F (v),
\end{array}
\]
pour $v$ une section quelconque de $\Det[\TCC]$. On vérifie que cet isomorphisme est bien défini à homotopie près, et que l'isomorphisme qu'il induit entre $\Det[\TCC]\otimes \DD_0(\TCC)$ et $\det(\HH^1(\surg,\RR)_{-1})$ commute avec l'action de $\riso{N}$ d'un côté et $\RR Diff^+(\surg)$ de l'autre.
\end{proof}

\subsubsection{Transformations élémentaires réelles négatives}\label{transfopar}

Supposons dans un premier temps que $N$ est de rang un. La démonstration du Théorème \ref{totalaction} consiste à se ramener au cas déjà traité du fibré trivial sur la courbe (voir \S \ref{extrivial}) au moyen de transformations élémentaires négatives. Tout ce paragraphe est consacré à l'étude de ces transformations. Fixons pour cela une structure complexe $J\in\RR J(\surg)$. Prenons un opérateur de Cauchy-Riemann réel $\DB\in\ropj$. Notons $\NF$ le faisceau des sections holomorphes de $(N,\DB)$.

Soit $D' = \dd\sum_{x\in\RR\surg} a_x x + \dd\sum_{\{z,\overline{z}\}\in\surg^{(2)}}b_{\{z,\overline{z}\}}(z + \overline{z})$ un diviseur effectif réel. On notera 
\[
\begin{array}{c}
\Delta_{b_{\{z,\overline{z}\}}\{z,\overline{z}\}} = \dd\RR \left(\bigoplus_{m=0}^{b_{\{z,\overline{z}\}}-1}\oplus_{y\in\{z,\overline{z}\}}\left((T_{y}^*\surg)^{\otimes m}\otimes N_{y}\right)\right) \\
\RR N_{a_x x} = \dd \bigoplus_{m=0}^{a_x -1} \left( (T_{x}^*\RR\surg)^{\otimes m}\otimes \RR N_{x} \right)\\ 
\text{et } N_{|D'} = \dd\bigoplus_{x\in D'_{|\RR\surg}} \RR N_{a_x x} \dd\bigoplus_{\{z,\overline{z}\}\in D'-D'_{|\RR\surg}}\Delta_{b_{\{z,\overline{z}\}}\{z,\overline{z}\}}.
\end{array}
\]
Remarquons que pour orienter $N_{|D'}$ il ne suffit pas d'orienter chaque droite apparaissant dans la somme. Il faut aussi prendre en compte l'ordre dans lequel ces droites apparaissent.

\begin{Definition}
  Pour un diviseur effectif réel $D'$ sur $\surg$ dont tous les points sont de multiplicité au plus $1$, la transformation élémentaire réelle négative en $D'$ du faisceau $\NF$ est le faisceau localement libre $\NF_{-D'}$ muni d'une structure réelle défini par la suite exacte
\[
0\rightarrow \NF_{-D'} \rightarrow \NF \xrightarrow{ev_{D'}} N_{|D'} \rightarrow 0.
\]
On définit la transformation élémentaire réelle négative en un diviseur effectif réel quelconque en répétant le procédé ci-dessus. Le faisceau $\NF_{-D'}$ obtenu est de rang un, de degré $\deg(N) - \deg(D')$ et sa partie réelle est de première classe de Stiefel-Whitney $w_1(\RR N) - (D'_{|\RR\surg})^{\pd}$.
\end{Definition}

\`A première vue, une transformation élémentaire négative est une opération sur un fibré holomorphe, c'est-à-dire sur un opérateur de Cauchy-Riemann donné. Toutefois, comme nous voulons étudier l'action d'un automorphisme quelconque de $(N,c_N)$ sur les opérateurs de Cauchy-Riemann réels du fibré, il nous faut pouvoir comparer les transformations élémentaires obtenues à partir de deux opérateurs de Cauchy-Riemann différents. Dans la suite du paragraphe, on montre qu'on peut en fait effectuer cette opération simultanément sur tous les opérateurs de Cauchy-Riemann réels sur $(N,c_N)$ de façon cohérente.

Comme nous l'avons remarqué au \S 4.2.3 de \cite{article1}, après une transformation élémentaire réelle négative en $D'$ du faisceau $\NF$, on retrouve un fibré en droites holomorphe réel $(N_{-D'},\DB_{-D'},c_{N,-D'})$. On a de plus deux inclusions naturelles $\ZZ/2\ZZ$-équivariantes
\[
\begin{array}{c}
  i_{-D'} : L^{k,p}(\surg,N_{-D'})\rightarrow L^{k,p}(\surg,N) \\
  j_{-D'} : L^{k-1,p}(\surg,\Lambda^{0,1}\surg\otimes N_{-D'})\rightarrow L^{k-1,p}(\surg,\Lambda^{0,1}\surg\otimes N) \\
\end{array}
\]
où $k\geq 1$, $p>2$. De plus, les deux inclusions $i_{-D'}$ et $j_{-D'}$ induisent une application
\[
\begin{array}{c}
  t_{N_{-D'},J} : \ropj \rightarrow  \ropj[N_{-D'}]
\end{array}
\]
telle que l'injection $i_{-D'}$ induise pour chaque opérateur $\DB'\in\ropj$ un isomorphisme entre le faisceau des sections holomorphes de $(N_{-D'},t_{-D'}(\DB'))$ et celui des sections holomorphes de $(N,\DB')$ qui s'annulent en $D'$ à l'ordre donné par les multiplicités des points dans $D'$ (voir par exemple \cite{article1} ou la Proposition \ref{transfoauto} plus bas).

Si $(\Phi,\varphi) : (N,c_N)\rightarrow (M,c_M)$ est un isomorphisme entre deux fibrés en droites complexes sur $(\surg,c_\sur)$ et $D'$ un diviseur effectif réel, alors $(\Phi,\varphi)$ induit un isomorphisme entre $L^{k,p}(\surg,N)$ et $L^{k,p}(\surg,M)$ qui n'en induit pas forcément un entre $L^{k,p}(\surg,N_{-D'})$ et $L^{k,p}(\surg,M_{-\varphi_*(D')})$. Nous avons toutefois la Proposition suivante.

\begin{Proposition}\label{transfoauto}
  Soit $(N,c_N)$ un fibré en droites complexes sur $(\surg,c_\sur)$ et $J\in\RR J(\surg)$. Soit $D'$ un diviseur effectif réel et $(N',c_{N'})$ un fibré en droites complexes obtenu après transformation élémentaire réelle négative de $(N,c_N)$ en un diviseur effectif réel $D'$. Alors, il existe une application $t_{N'} : \rop/\raut{N}_0 \rightarrow \rop[N']/\raut{N'}_0$ définie naturellement et prolongeant $t_{N',J}$, où l'indice $0$ indique que l'on prend la composante connexe de l'identité d'un groupe topologique.

Soit $(M,c_M)$ un autre fibré sur $(\surg,c_\sur)$ et $(\Phi,\varphi) : (N,c_N)\rightarrow (M,c_M)$ un isomorphisme. Soit $(M',c_{M'})$ un fibré en droites complexes obtenu après transformation élémentaire réelle négatives de $(M,c_M)$ en $\varphi_*(D')$. Alors $(\Phi,\varphi)$ induit un isomorphisme $(\Phi_{-D'},\varphi) : (N',c_{N'}) \rightarrow (M',c_{M'})$ tel que $(\Phi_{-D'})^*\circ t_{M'} = t_{N'}\circ \Phi^*$. De plus, $(\Phi_{-D'},\varphi)$ est uniquement défini à multiplication par une fonction de $\RR\CCC^{\infty}(\surg,\CC^*)$ homotope à la fonction constante $1$ près. 
\end{Proposition}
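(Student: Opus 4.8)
The plan is to reduce both assertions to the functoriality of the defining exact sequence $0\to\NF_{-D'}\to\NF\xrightarrow{ev_{D'}}N_{|D'}\to 0$, and then to check that every choice made in the construction contributes only an ambiguity lying in the identity component of the relevant automorphism group. Since the transformation at a divisor of higher multiplicity is obtained by iterating the case where all points have multiplicity at most $1$, it suffices to treat that case and let functoriality propagate.

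First I would build $t_{N'}$ on all of $\rop$ and show it descends to the quotient. Fix the smooth bundle $(N',c_{N'})$ and the equivariant inclusions $i_{-D'}$, $j_{-D'}$. Given $\DB\in\rop$ over $J$, I use a continuous local family of $J'$-holomorphic coordinates centred at the points of $D'$ (such families exist on a neighbourhood of each $J$, as recalled above) to perform the negative elementary transformation, obtaining an operator $\DB_{-D'}$ on a bundle canonically identified with $(N',c_{N'})$ away from $D'$; I set $t_{N'}(\DB)=\DB_{-D'}$. Two coordinate systems centred at the points of $D'$ differ by an automorphism of $N'$, and since the space of such $J$-holomorphic coordinate systems is connected this automorphism lies in $\raut{N'}_0$; hence the class of $t_{N'}(\DB)$ in $\rop[N']/\raut{N'}_0$ is well defined, depends continuously on $\DB$, and restricts to $t_{N',J}$ on each fibre $\ropj$.

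The functoriality is the conceptual core. An isomorphism $(\Phi,\varphi):(N,c_N)\to(M,c_M)$ is $\CC$-linear in the fibres and covers $\varphi$, so under the identification of sections of $(M,\DB_M)$ with sections of $(N,\Phi^*\DB_M)$ it matches sections vanishing along $\varphi_*D'$ with sections vanishing along $D'$. Thus $\Phi$ restricts to a smooth isomorphism $(\Phi_{-D'},\varphi):(N',c_{N'})\to(M',c_{M'})$ of the transformed bundles, characterised by the commuting relation $\Phi\circ i_{-D'}^{N}=i_{-\varphi_*D'}^{M}\circ\Phi_{-D'}$ and its analogue for the $j$'s. Combining these with the defining property of $t$ (namely that $i_{-D'}$ is holomorphic for the transformed operator) yields the intertwining relation
\[
(\Phi_{-D'})^*\circ t_{M'}=t_{N'}\circ\Phi^*.
\]
Specialising to $M=N$ and $\Phi=f\in\raut{N}_0$ gives $t_{N'}(f^*\DB)=(f_{-D'})^*t_{N'}(\DB)$, and continuity of the transform forces $f_{-D'}\in\raut{N'}_0$ whenever $f$ is homotopic to the identity; this is exactly the equivariance needed for $t_{N'}$ to descend to $\rop/\raut{N}_0\to\rop[N']/\raut{N'}_0$.

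For the uniqueness of $\Phi_{-D'}$: it is canonical as a map of sheaves, but realising it between the fixed smooth bundles $N'$ and $M'$ uses the smooth identifications coming from the coordinate choices. Away from $D'$ the transform is canonical, so there $\Phi_{-D'}$ coincides with $\Phi$; near the points of $D'$ it is twisted by powers of the chosen coordinates. Two admissible realisations therefore differ by an element of $\raut{N'}=\RR\CCC^\infty(\surg,\CC^*)$ equal to $1$ off a neighbourhood of $D'$, hence homotopic to the constant function $1$ through the connected family of coordinate choices, which is the asserted ambiguity. The conceptual step above is immediate; the main obstacle is the bookkeeping that pins every ambiguity---coordinate changes, the transform $f\mapsto f_{-D'}$ of an automorphism, and the two realisations of $\Phi_{-D'}$---into $\raut{\cdot}_0$ (respectively into functions homotopic to $1$), together with securing continuity of the whole construction as $J$ ranges over the infinite-dimensional contractible space $\RR J(\surg)$, which is what makes the local families of holomorphic coordinates and their patching unavoidable.
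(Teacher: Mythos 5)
Your overall strategy is the same as the paper's: define $t_{N'}$ via the gluing description of the elementary transformation in equivariant holomorphic charts, extend over the contractible space $\RR J(\surg)$, obtain $\Phi_{-D'}$ from the sheaf-level functoriality, and push every ambiguity into the identity component. The gap is in the step that carries all the weight. You justify well-definedness modulo $\raut{N'}_0$ by asserting that the space of $\ZZ/2\ZZ$-équivariant $J$-holomorphic coordinate systems centred at the points of $D'$ is connected. At a real point $x$ of $D'$ this is false: the differential at $x$ of such a chart restricts to a nonzero element of $T^*_x\RR\surg$, so the space of equivariant charts at $x$ has two components, indexed by the induced orientation of $\RR\surg$ at $x$. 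The paper is explicitly aware of this (it is the reason for the sign in the condition $\xi^1_x = \pm\zeta^1_x\circ\varphi$ in its proof), and the orientation data you are discarding is exactly what the fibrés $T_{(\ud)}$, $\RR\jet_D$ and the $(-1)^{n^t_i}$ bookkeeping in the proof of the Proposition \ref{dtrivial} are designed to track: for $a_x$ odd, reversing the chart changes the gluing cocycle by $-1$, and whether the resulting comparison lies in $\raut{N'}_0$ is precisely the nontrivial sign the rest of the paper computes. So the sentence \og hence the comparison automorphism lies in $\raut{N'}_0$ \fg{} is unproved in the only delicate case.

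The same shortcut reappears in your uniqueness argument for $\Phi_{-D'}$ (\og homotopic to the constant function $1$ through the connected family of coordinate choices \fg), and moreover it is not established there that the ambiguity is supported near $D'$: identifying the bundle produced over $\varphi^*J_M$ with the fixed $(N',c_{N'})$ built at $J$ requires a global transport over a path in $\RR J(\surg)$, not a local modification. What the paper does instead, and what you would need to supply, is the following mechanism: choose a path $(J_t)$, a continuous family of equivariant $J_t$-holomorphic charts, form the resulting family of bundles $(N'_t,c_{N'_t})$, trivialise it by isomorphisms $f'_t$ with $f'_0=\id$ (unique up to homotopy because $[0,1]$ is contractible, which pins $f'_1$ modulo $\raut{N'}_0$), and compare two chart families via the explicit isomorphism equal to $(\zeta^t_x)^{a_x}/(\xi^t_x)^{a_x}$ near $x$ and to the identity elsewhere. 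Independence of the path then follows from contractibility of $\RR J(\surg)$. Without this (or an equivalent) construction, the descent to $\rop/\raut{N}_0$ and the uniqueness of $(\Phi_{-D'},\varphi)$ are not proved.
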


La Proposition \ref{transfoauto} nous dit en particulier que nous pouvons effectuer une transformation élémentaire simultanément sur tous les opérateurs de Cauchy-Riemann réels sur $(N,c_N)$ : le résultat de cette opération est une paire formée du fibré $(N',c_{N'})$ et de l'application $t_{N'}$ de telle sorte que tout fibré holomorphe réel obtenu par transformation élémentaire en $D'$ d'un opérateur $\DB \in \rop$ est isomorphe à $(N',t_{N'}(\DB),c_{N'})$. Cette paire n'est pas définie uniquement, mais à isomorphisme canonique près.

\begin{proof}
Le fibré $N'$ est défini comme le recollement de $N_{|\surg\setminus D'}$ et $N_{|U_{D'}}$ , où $U_{D'}$ est une union disjointe de disques ouverts centrés aux points de $D'$ et $\ZZ/2\ZZ$-équivariant, par les applications de recollement
\[
\begin{array}{c c c}
  N_{U_{|a_x x}\cap (\surg\setminus D')} & \rightarrow & N_{( \surg\setminus D')\cap U_{|a_x x}} \\
(\xi_x,v) & \mapsto & (\xi_x , \xi_x^{a_x}v)
\end{array}
\]
où $(U_{a_x x},\xi_x)_{x\in D'}$ sont des cartes $J$-holomorphes $\ZZ/2\ZZ$-équivariantes. On voit donc que si $\DB$ est un élément de $\ropj$, alors ses restrictions à chaque ouvert $\surg\setminus\{D'\}$ et $U_{D'}$ se recollent et donnent l'opérateur $t_{N',J}(\DB)$. 

On remarque ensuite que $t_{N',J}$ induit bien une application notée de la même façon de $\ropj/\raut{N}_0$ dans $\ropj[N']/\raut{N'}_0$ car, pour tout $f\in\raut{N}_0 = \RR \CCCC^{\infty}(\surg,\CC^*)_0 = \raut{N'}_0$, on a $t_{N',J}(f^*\DB) = f^*(t_{N',J}(\DB))$.

Soit $J_1 \in\RR J(\surg)$ et $J_t\in\RR J(\surg)$, $t\in [0,1]$, joignant $J$ à $J_1$. Prenons une famille continue $(U_{a_x x},\xi^t_x)_{x\in D',t\in [0,1]}$ de cartes $J_t$-holomorphes et $\ZZ/2\ZZ$-équivariante avec $\xi_x^0 = \xi_x$. On obtient alors pour chaque $t\in [0,1]$ un fibré en droites complexes $(N'_t,c_{N'_t})$ qui est obtenu par transformation élémentaire réelle négative de $(N,c_N)$ en $D'$. Comme $[0,1]$ est contractile, il existe une unique famille continue d'isomorphismes $f'_t : N' \rightarrow N'_t$, $t\in [0,1]$, à homotopie près trivialisant la famille $(N'_t,c_{N'_t})_{t\in [0,1]}$. On peut de plus supposer que $f'_0 = \id_{N'}$. Posons alors $t_{N',J_1} = (f'_1)^*\circ t_{N'_1,J_1}$. Il nous faut vérifier que cette application ne dépend ni du chemin $(J_t)_{t\in [0,1]}$ choisi ni des coordonnées $(\xi_x^t)_{t\in [0,1]}$ choisies. Prenons une famille continue $(U_{a_x x},\zeta_x^t)_{t\in [0,1]}$ de cartes $J_t$-holomorphes $\ZZ /2\ZZ$-équivariantes avec $\zeta_x^0 = \xi_x$. \`A nouveau on obtient une famille $(N''_t,c_{N''_t})_{t\in [0,1]}$ de fibrés en droites complexes sur $(\surg,c_\sur)$ trivialisée par $f''_t : N' \rightarrow N''_t$, $t\in [0,1]$. De plus, on a une famille d'isomorphisme $F_t$, dépendant de $t\in [0,1]$, entre $N'_t$ et $N''_t$ donnée par l'identité sur $N_{|\surg \setminus D'}$ et par le quotient $\frac{(\zeta_x^t)^{a_x}}{(\xi_x^t)^{a_x}}$ sur $N_{|U_{a_x x}}$. On a de plus $(F_t)^*\circ t_{N''_t,J_t} = t_{N'_t,J_t}$. D'autre part, $F_t\circ f'_t : N' \rightarrow N''_t$, $t\in [0,1]$, fournit une autre trivialisation de $(N''_t,c_{N''_t})$, donc $F_1\circ f'_1$ est égal à $f''_1$ à homotopie près. Ainsi, on a $(f''_1)^*\circ t_{N''_1,J_1} = (F_1\circ f'_1)^* \circ t_{N''_1,J_1} = (f'_1)^*\circ t_{N'_1,J_1}$, ce qui montre l'indépendance de notre définition par rapport au choix des cartes holomorphes. L'indépendance par rapport au choix du chemin $(J_t)_{t\in [0,1]}$ découle du fait que $\RR J(\surg)$ est contractile.

Le fibré $M'$ est défini de façon analogue à $N$ en utilisant des cartes $J_M$-holomorphes $(V_{a_x x},\zeta_x)_{x\in \varphi_*(D')}$ qui sont $\ZZ/2\ZZ$-équivariantes. Prenons $J_t\in\RR J(\surg)$, $t\in [0,1]$, un chemin joignant $J_0 = J$ à $J_1 = \varphi^* J_M$, et $(U^t_{a_x x},\xi^t_x)_{x\in D',t\in [0,1]}$ un chemin de cartes $J_t$-holomorphes et $\ZZ/2\ZZ$-équivariante avec $\xi_x^0 = \xi_x$ et $\xi_x^1 = \pm \zeta_x^1\circ\varphi$. Le signe dans cette dernière égalité apparaît car on souhaite garder des coordonnées qui sont $\ZZ/2\ZZ$-équivariantes. On obtient ainsi une famille $(N'_t,c_{N'_t})$ de fibrés en droites complexes. L'automorphisme $(\Phi,\varphi)$ induit naturellement un isomorphisme entre $(N'_1,c_{N'_1})$ et $(M',c_{M'})$ que l'on compose avec la trivialisation de la famille $(N'_t,c_{N'_t})$ pour obtenir un isomorphisme $(\Phi_{-D'},\varphi)$ de $(N',c_{N'})$ dans $(M',c_{M'})$. Ce dernier n'est défini qu'à multiplication par un élément de $\raut{N'}$ homotope à l'identité près, et ne dépend pas des trivialisations choisies. Le fait que $(\Phi_{-D'})^*\circ t_{M'} = t_{N'}\circ \Phi^*$ découle directement des définitions.
\end{proof}

Dans les Lemmes \ref{transfodet} et \ref{transfod}, on définit des isomorphismes entre des objets associés à un fibré $(N,c_N)$ (resp. $(M,c_M)$) et des objets associés à un fibré $(N',c_{N'})$ (resp. $(M',c_{M'})$) obtenu par transformation élémentaire en un diviseur effectif réel $D'$. La naturalité de ces isomorphismes signifie que pour tout isomorphisme $(\Phi,\varphi): (N,c_N)\rightarrow (M,c_M)$, ceux-ci commutent avec l'action de $\Phi$ à droite et celle de $\Phi_{-D'}$ à gauche.

Le Lemme \ref{transfodet} décrit l'effet d'une transformation élémentaire réelle négative sur le fibré déterminant. Pour la démonstration, on renvoie à \cite{article1}, Lemme 4.16, ou à \cite{shev}.

% \begin{Lemme}\label{transfofibre}
% Soit $(N,c_N)$ un fibré en droites complexes sur $(\surg,c_\sur)$. Soit $D'$ un diviseur effectif réel dont tous les points sont de multiplicité au plus $1$ et $(N',c_{N'})$ un fibré en droites complexes obtenu après transformation élémentaire de $(N,c_N)$ en $D'$. L'injection $i_{-D'}$ induit naturellement un isomorphisme entre $\RR\left(\bigoplus_{x\in D'} N'_x\right)$ et $N_{|D'}$.\qed
% \end{Lemme}

\begin{Lemme}\label{transfodet}
Soit $(N,c_N)$ un fibré en droites complexes sur $(\surg,c_\sur)$. Soit $D'$ un diviseur effectif réel et $(N',c_{N'})$ un fibré en droites complexes obtenu après transformation élémentaire de $(N,c_N)$ en $D'$. Les inclusions $i_{-D'}$ et $j_{-D'}$ induisent un isomorphisme canonique entre $\Det$ et $((t_{N'})^*\Det[N'])\otimes N_{|D'}$.\qed
\end{Lemme}

Le Lemme \ref{transfod} décrit l'effet d'une transformation élémentaire réelle négative sur le fibré $\DD_D(N)$.

\begin{Lemme}\label{transfod}
  Soit $(N,c_N)$ un fibré en droites complexes sur $(\surg,c_\sur)$ et $D$ un diviseur compatible avec ce fibré. Soit $D'$ un diviseur effectif réel et $(N',c_{N'})$ un fibré en droites complexes obtenu après transformation élémentaire de $(N,c_N)$ en $D'$. Il existe un isomorphisme canonique à homotopie près entre $\DD_D(N)$ et $\dd \DD_{D-D'}(N')\bigotimes_{x\in D'_{|\RR\surg}} \left((T_x^*\RR\surg)^{\otimes mult_D(x)}\otimes \RR N_x\right)^{\otimes mult_{D'}(x)}$ au-dessus de $\RR J(\surg)$.
\end{Lemme}

% \begin{Rem}
% Par abus de notations, on confond le fibré en droites réelles \\ $\bigotimes_{x\in D'_{|\RR\surg}} \left((T_x^*\RR\surg)^{mult_D(x)}\otimes \RR N_x\right)^{mult_{D'}(x)}$ avec le $\ZZ/2\ZZ$-fibré principal associé.
% \end{Rem}

\begin{proof}
Fixons une section $\DB^+_D$ associée à $D$. Elle induit une section de 
\[
\dd \DD_{D-D'}(N')\bigotimes_{x\in D'_{|\RR\surg}} \left((T_x^*\RR\surg)^{\otimes mult_D(x)}\otimes \RR N_x\right)^{\otimes mult_{D'}(x)}
\]
 donnée par 
\[
\dd [t_{N'}(\DB^+_{D})]\bigotimes_{x\in D'_{|\RR\surg}} (\d_x^{mult_D(x)}\sigma)^{\otimes mult_{D'}(x)},
\]
 où $\sigma$ est une section méromorphe de $(N,\DB^+_{D,J})$ pour un $J$ quelconque, de diviseur $D$ et donnant la polarisation de $\DB^+_D$. Cette section ne dépend pas du choix de $\sigma$. Supposons que $\DB^{+,'}_D$ est une autre section polarisée associée à $D$ qui induit la même section de $\DD_D$ que $\DB^+_D$. Il nous faut démontrer que $\DB^{+,'}_{D-D'}$ induit la même section de $\dd \DD_{D-D'}(N')\bigotimes_{x\in D'_{|\RR\surg}} \left((T_x^*\RR\surg)^{\otimes mult_D(x)}\otimes \RR N_x\right)^{\otimes mult_{D'}(x)}$ que $\DB^+_{D-D'}$ pour obtenir l'isomorphisme voulu. D'après le Lemme \ref{sectionpol}, il existe une famille continue $F_D: \RR J(\surg)\rightarrow \raut{N}^+$ telle que $\DB^{+,'}_{D} = F^*(\DB^+_D)$. D'après la Proposition \ref{transfoauto}, après transformation élémentaire, on obtient une famille continue $F_{D-D'}:\RR J(\surg)\rightarrow \raut{N'}$ égale à $F_D$ sous l'isomorphisme $\raut{N} = \RR \CCCC^\infty(\surg,\CC^*) = \raut{N'}$. Remarquons qu'il y a des cas où $F_{D-D'}$ n'est pas à valeurs dans $\raut{N'}^+$, auxquels cas $[t_{N'}(\DB^{+,'}_{D})] = -[t_{N'}(\DB^{+}_{D})]$, et une section de $\DD_D(N)$ n'induit pas une section de $\DD_{D-D'}(N')$.

 D'après la Proposition \ref{rapp} (voir aussi la Proposition 4.3 de \cite{article1}), le signe de l'action de $F_{D-D'}$ sur les orientations de $\Det[N']$ se décompose en deux parties : l'une ne dépend pas de $N'$, l'autre compte le nombre de composantes orientables de $\RR N'$ dont $F_{D-D'}$ échange les orientations. Ainsi $F_{D-D'}$ est à valeurs dans $\raut{N'}^+$ si et seulement si la parité de ce dernier nombre ne change pas en faisant la transformation élémentaire. Calculons donc la différence de parité lors du passage de $N$ à $N'$. Fixons une composante $(\RR\surg)_i$ de la partie réelle de $\surg$. Si $D'$ a un nombre pair de points comptés avec multiplicités sur $(\RR\surg)_i$, alors $(\RR N)_i$ et $(\RR N')_i$ ont même orientabilité et cette composante ne contribue pas au signe recherché. Notons de plus que dans ce cas $F_{D-D'}$ agit toujours trivialement sur $\bigotimes_{x\in D'_{|(\RR\surg)_i}} \left((T_x^*\RR\surg)^{\otimes mult_D(x)}\otimes \RR N_x\right)^{\otimes mult_{D'}(x)}$. Supposons maintenant que $D'$ a un nombre impair de points sur $(\RR\surg)_i$. Les fibrés $(\RR N)_i$ et $(\RR N')_i$ n'ont plus la même orientabilité. Deux cas se présentent :
\begin{enumerate}
\item soit $F_{D-D'}$ est positif sur $(\RR \surg)_i$ et cette composante ne contribue toujours pas au signe recherché,
\item soit $F_{D-D'}$ est négatif sur $(\RR \surg)_i$ et cette composante change le signe recherché.
\end{enumerate}
De plus, dans le premier cas $F_{D-D'}$ agit trivialement sur $\bigotimes_{x\in D'_{|(\RR\surg)_i}} \left((T_x^*\RR \surg)^{\otimes mult_D(x)}\otimes \RR N_x\right)^{\otimes mult_{D'}(x)}$ mais pas dans le second. Ainsi, lorsque $[\DB^{+}_{D-D'}] = [\DB^{+,'}_{D-D'}]$, on a 
\[
\bigotimes_{x\in D'_{|\RR\surg}} (\d_x^{mult_D(x)}\sigma)^{\otimes mult_{D'}(x)} = \bigotimes_{x\in D'_{|\RR\surg}} (\d_x^{mult_D(x)}\sigma')^{\otimes mult_{D'}(x)}
\]
et  lorsque $[\DB^{+}_{D-D'}] = - [\DB^{+,'}_{D-D'}]$, on a 
\[
\bigotimes_{x\in D'_{|\RR\surg}} (\d_x^{mult_D(x)}\sigma)^{\otimes mult_{D'}(x)} = - \bigotimes_{x\in D'_{|\RR\surg}} (\d_x^{mult_D(x)}\sigma')^{\otimes mult_{D'}(x)}
\]
 La section de $\dd \DD_{D-D'}(N')\bigotimes_{x\in D'_{|\RR\surg}} \left((T_x^*\RR\surg)^{\otimes mult_D(x)}\otimes \RR N_x\right)^{\otimes mult_{D'}(x)}$ obtenue ne dépend donc que de la section $[\DB^+_D]$ de $\DD_D(N)$.

On vérifie de la même façon que l'application construite est bien un isomorphisme. La naturalité de celui-ci est immédiate.
\end{proof}

Nous allons surtout utiliser la Proposition \ref{transfoauto} et les Lemmes \ref{transfodet} et \ref{transfod} de la façon suivante. Si $(N,c_N)$ un fibré en droites complexes sur $(\surg,c_\sur)$, $D$ un diviseur compatible avec ce fibré, $D'$ un diviseur effectif réel et $(N',c_{N'})$ et $(N'',c_{N''})$ deux fibrés en droites complexes obtenus après transformation élémentaire de $(N,c_N)$ en $D'$, alors $\id_N\in\riso{N}$ induit naturellement un isomorphisme $F$ entre $(N',c_{N'})$ et $(N'',c_{N''})$, défini de façon unique à homotopie près et tel que les trois diagrammes suivant commutent
\[
\xymatrix{
  & \rop[N']/\raut{N'}_0 \\
\rop/\raut{N}_0 \ar[rd]_{t_{N''}}\ar[ru]^{t_{N'}} & \\
 & \rop[N'']/\raut{N''}_0\ar[uu]_{F^*}
}
\xymatrix{
  & ((t_{N'})^*\Det[N'])\otimes N_{|D'} \\
\Det \ar@{=}[rd]\ar@{=}[ru] & \\
  & ((t_{N''})^*\Det[N''])\otimes N_{|D'} \ar[uu]_{F^*\otimes \id_N} \\
}
\]
\[
\xymatrix{
  & \dd \DD_{D-D'}(N')\bigotimes_{x\in D'_{|\RR\surg}} \left((T_x^*\RR\surg)^{\otimes mult_D(x)}\otimes \RR N_x\right)^{\otimes mult_{D'}(x)} \\
\DD_D(N) \ar@{=}[rd]\ar@{=}[ru] & \\
 & \dd \DD_{D-D'}(N'')\bigotimes_{x\in D'_{|\RR\surg}} \left((T_x^*\RR\surg)^{\otimes mult_D(x)}\otimes \RR N_x\right)^{\otimes mult_{D'}(x)}\ar[uu]_{F^*\otimes\id_N}
}
\]

Ainsi, le choix du fibré complexe obtenu après transformation élémentaire n'interviendra pas dans la démonstration du Théorème \ref{totalaction}.

\subsubsection{Action d'un automorphisme général : Démonstration}

Nous passons maintenant à la démonstration du Théorème \ref{totalaction} proprement dite. Nous commençons par le cas d'un fibré en droites complexes, puis nous montrons comment s'y ramener à partir du cas général.

\begin{proof}[Démonstration du Théorème \ref{totalaction}]

Soit $(N,c_N)$ un fibré vectoriel complexe muni d'une structure réelle sur $(\surg,c_\sur)$. Commençons par fixer un diviseur $D$ compatible avec $(\det(N),c_{\det(N)})$. Afin d'alléger les notations, nous sous-entendrons les tirés-en-arrière des fibrés et considérerons qu'ils sont tous définis au-dessus de $\rop$.

{\bf Premier cas : $\boldsymbol{\rang(N) = 1}$}

\'Ecrivons $D$ sous la forme $D = D^+ - D^-$ où $D^+$ et $D^-$ sont effectifs et pour tout $x$ dans le support de $D$, $mult_{D^+}(x) = \max(0,mult_{D}(x))$ et $mult_{D_-}(x) = -\min(0,mult_D(x))$. Prenons un fibré en droites complexes $(N',c_{N'})$ obtenu après transformation élémentaire réelle négative de $(N,c_N)$ en $D^+$ (voir \S \ref{transfopar}). D'après la Proposition \ref{transfoauto} et les Lemmes \ref{transfodet} et \ref{transfod}, nous avons un isomorphisme naturel
\begin{multline*}\label{isomun}
\Det\otimes \DD_D(N) = \Det[N']\otimes\DD_{D-D^+}(N')\\ \otimes \det(N_{|D^+})\otimes \dd\bigotimes_{x\in D^+_{|\RR\surg}} \left((T_x^*\RR\surg)^{\otimes mult_D(x)}\otimes \RR N_x\right)^{\otimes mult_{D}(x)}.\tag{$*$}
\end{multline*}
Décomposons $D^+$ en trois diviseurs effectifs : $D^+ = D^+_{|\surg\setminus\RR\surg} + D^+_{|\RR\surg,0} + D^+_{|\RR\surg,1}$, où $D^+_{|\RR\surg,0}$, respectivement $D^-_{|\RR\surg,1}$, contient les points réels de $D^+$ qui sont de multiplicité paire, respectivement impaire. On a alors les isomorphismes canoniques suivant : pour les points complexes conjugués
\[
\det(\bigoplus_{\{z,\overline{z}\}\in D^+}\RR (\bigoplus_{m = 0}^{mult_D(z)-1}\oplus_{y\in\{z,\overline{z}\}}\left((T^*_y\surg)^{\otimes m}\otimes N_y\right))) = \dd\det(\RR\jet_{|D^+_{|\surg\setminus\RR\surg}}),
\]
pour les points réels de multiplicités paires
\[
\dd\det(\bigoplus_{x\in D^+_{|\RR\surg,0}} (\oplus_{m=0}^{mult_D(x)-1} (T^*_x\RR\surg)^{\otimes m})\otimes \RR N_x) = \dd\det(\RR\jet_{D^+_{|\RR\surg,0}}),
\]
car pour chaque $x\in D^+_{|\RR\surg,0}$, $\oplus_{m=0}^{mult_D(x)-1} (T^*_x\RR\surg)^{\otimes m}$ est de dimension paire,
\[
\dd\bigotimes_{x\in D^+_{|\RR\surg,0}} \left((T_x^*\RR\surg)^{\otimes mult_D(x)}\otimes \RR N_x\right)^{\otimes mult_{D}(x)} =\RR,
\]
et pour les points réels de multiplicités impaires
\begin{multline*}
\dd\det(\bigoplus_{x\in D^+_{|\RR\surg,1}} (\oplus_{m=0}^{mult_D(x)-1} (T^*_x\RR\surg)^{\otimes m})\otimes \RR N_x) \dd\bigotimes_{x\in D^+_{|\RR\surg,1}} \left((T_x^*\RR\surg)^{\otimes mult_D(x)}\otimes \RR N_x\right)^{\otimes mult_{D}(x)} \\ = \dd\det(\RR\jet_{D^+_{|\RR\surg,1}}) \dd\bigotimes_{x\in D^+_{|\RR\surg,1}} (T_x^*\RR\surg)^{\otimes mult_D(x)}.
\end{multline*}

Ainsi l'isomorphisme (\ref{isomun}) devient
\[\label{isomunp}
\Det\otimes \DD_D(N) = \Det[N']\otimes \DD_{D-D^+}(N')\otimes\dd\det(\RR\jet_{D^+})\otimes\dd\bigotimes_{x\in D^+_{|\RR\surg}} (T_x^*\RR\surg)^{\otimes mult_D(x)}.\tag{$*'$}
\]

Prenons maintenant un fibré $(N'',c_{N''})$ obtenu après transformation élémentaire de $(\TCC,conj)$ en $D^-$. Nous avons un autre isomorphisme naturel
\[\label{isomdeux}
\Det[\TCC]\otimes \DD_0(\TCC) = \Det[N'']\otimes \DD_{-D^-}(N'')\otimes\det(\RR\jet_{D^-}).\tag{$**$}
\]

Remarquons de plus que $(N',c_{N'})$ et $(N'',c_{N''})$ sont isomorphes. D'après le Lemme \ref{isomdetd}, on a un isomorphisme canonique
\[\label{isomtrois}
\Det[N']\otimes \DD_{D-D^+}(N') = \Det[N'']\otimes \DD_{-D^-}(N'').\tag{$***$}
\]

En combinant les isomorphismes (\ref{isomunp}),(\ref{isomdeux}) et (\ref{isomtrois}), on obtient 
\[
\Det \otimes \DD_D(N) = \Det[\TCC]\otimes\DD_0(\TCC)\otimes\det(\RR\jet_{D})\dd\bigotimes_{x\in D^+_{|\RR\surg}} (T_x^*\RR\surg)^{\otimes mult_D(x)}.
\]

Ainsi, d'après le Lemme \ref{isomdetd}, on a un isomorphisme canonique
\[
\Det \otimes \DD_D(N) = \det(H^1(\surg,\RR)_{-1})\otimes\det(\RR\jet_{D})\dd\bigotimes_{x\in D^+_{|\RR\surg}} (T_x^*\RR\surg)^{\otimes mult_D(x)},
\]
ce qui conclut le cas du rang un. Notons que cet isomorphisme ne dépend pas des choix de $N'$ et $N''$ comme nous l'avons remarqué à la fin du \S \ref{transfopar}.

{\bf Second cas : $\boldsymbol{\rang(N) > 1}$}

D'après le Lemme \ref{isomdetd}, on a
\begin{multline*}
\Det\otimes \DD_D(N) \otimes \PV = \Det[\det(N)\oplus\TCC^{\oplus \rang(N)-1}]\otimes \DD_D(\det(N)\oplus \TCC^{\oplus \rang(N)-1}) \\ \otimes \PV[(\det(N)\oplus\TCC^{\oplus \rang(N)-1})].
\end{multline*}

Par définition, $\DD_D(\det(N)\oplus \TCC^{\oplus \rang(N)-1}) = \DD_D(N)$. On oriente maintenant le fibré $\PV[(\det(N)\oplus\TCC^{\oplus \rang(N)-1})]$ en utilisant la structure associée à $\RR\det(N)$ (voir Définition \ref{pincan}). On obtient donc
\[
\Det\otimes \DD_D(N) \otimes \PV = \Det[\det(N)\oplus\TCC^{\oplus \rang(N)-1}]\otimes \DD_D(N).
\]

D'autre part, on a une inclusion $(\DB,J)\in\rop[\det(N)]\mapsto (\DB\oplus \DB_J^{\oplus \rang(N)-1},J)\in \rop[\det(N)\oplus\TCC^{\rang(N)-1}]$ et un isomorphisme
\[
\Det[\det(N)\oplus\TCC^{\oplus \rang(N)-1}]_{|\rop[\det(N)]}\otimes \DD_D(N) = \Det[\det(N)]\otimes \Det[\TCC]_{|\RR J(\surg)}^{\otimes \rang(N)-1}\otimes \DD_D(N).
\]

En utilisant le cas précédent et le Lemme \ref{isopart},
\[
\Det[\det(N)\oplus\TCC^{\oplus \rang(N)-1}]_{|\rop[\det(N)]}\otimes \DD_D(N) = \det(H^1(\surg,\RR)_{-1})^{\otimes \rang(N)}\otimes\det(\RR\jet_{D})\dd\bigotimes_{x\in D^+_{|\RR\surg}} (T_x^*\RR\surg)^{\otimes mult_D(x)}.
\]

On obtient ainsi un isomorphisme
\[
\Det\otimes \DD_D(N) \otimes \PV = \det(H^1(\surg,\RR)_{-1})^{\otimes\rang(N)}\otimes\det(\RR\jet_{D})\dd\bigotimes_{x\in D^+_{|\RR\surg}} (T_x^*\RR\surg)^{\otimes mult_D(x)},
\]
défini à homotopie près. Cet isomorphisme est bien canonique.

Pour le deuxième point, pour chaque élément $\ux = (x^+_1,\ldots,x^+_{r^+},x^-_1,\ldots,x^-_{r^-},\uz^+,\uz^-) \in\surg^{(\ud)}$, nous avons un isomorphisme canonique
\[
\Det = \PV \otimes \DD_{D_{\ux}}(N)\otimes\det(H^1(\surg,\RR)_{-1})^{\otimes\rang(N)}\otimes\det(\RR\jet_{D_{\ux}})\dd\bigotimes_{i=1}^{r^+} T_{x_i^+}^*\RR\surg.
\]
En considérant une famille à un paramètre de diviseurs, on vérifie que la famille obtenue d'isomorphismes est bien continue. Ainsi, on obtient un isomorphisme canonique
\[
pr_1^*\Det = \PV \otimes \DD_{(\ud)}(N)\otimes\det(H^1(\surg,\RR)_{-1})^{\otimes\rang(N)}\otimes\det(\RR\jet_{(\ud)})\otimes T_{(\ud)}.
\]
 \end{proof}

\subsubsection{Deux cas particuliers}\label{corpart}

Nous précisons l'énoncé du Théorème \ref{totalaction} dans deux cas particuliers : lorsque la courbe $(\surg,c_\sur)$ est séparante puis lorsque la partie réelle du fibré $(N,c_N)$ est orientable. Nous retrouverons ces deux cas au \S \ref{module}.

\paragraph{Le cas séparant}

Soit $(N,c_N)$ un fibré vectoriel complexe au-dessus d'une courbe $(\surg,c_\sur)$ séparante. On désignera par $\RR\surg^+$ (resp. $\RR\surg^-$) la réunion des composantes connexes de $\RR\surg$ sur lesquelles $\RR N$ est orientable (resp. ne l'est pas). On note $k = b_0(\RR\surg)$ et $k_- = b_0(\RR\surg^-)$. Notons $O_{\RR N}$ la droite réelle $\dd\bigotimes_{c\subset\RR \surg^+} o((\RR N)_{|c})$, où $o((\RR N)_{|c})$ est la droite réelle engendrée par les deux orientations opposées de $\RR N$ restreint à la composante $c$ de $\RR\surg^+$. Un automorphisme $\Phi\in\riso{N}$ induit un élément $\Phi_o$ de $GL(O_{\RR N})$. D'autre part, on note $H$ la droite réelle engendrée par les deux orientations complexes de $\RR\surg$. Un difféomorphisme $\varphi\in\rdiff{N}$ d'une courbe $(\surg,c_\sur)$ séparante induit une transposition $\sigma_\varphi^{\RR\surg}$ sur les deux orientations complexes de $\RR\surg$ et une permutation $\varphi^-_{|\RR\surg}$ des composantes de $\RR\surg^-$.

\begin{Corollaire}\label{actionsepa}
  Soit $(N,c_N)$ un fibré vectoriel complexe sur $(\surg,c_\sur)$ que l'on suppose séparante. Il existe un isomorphisme canonique
\[
\Det = \PV\otimes O_{\RR N}\otimes \det(H_0(\RR\surg^-,\RR)) \otimes\det(H^1(\surg,\RR)_{-1})^{\otimes\rang(N)}\otimes H^{\otimes\frac{\deg(N)+k_-}{2}}.
\]

En particulier, soit $\varphi\in\rdiff{N}$ et $\Phi\in\riso{N}$ un relevé de $\varphi$. Le signe de l'action de $\Phi$ sur les orientations du fibré $\Det$ est donné par
  \[
  \varepsilon(\Phi_{\PPP^+}) \det(\Phi_o) \varepsilon_N(\varphi),
  \]
où $\varepsilon_N(\varphi) = \varepsilon(\sigma_\varphi^{\RR\surg})^{\frac{\deg(N)+k_-}{2}} \varepsilon(\varphi^-_{\RR\surg}) \det(\varphi^*)^{\rang(N)}$ ne dépend que de $\varphi$ et de $(N,c_N)$.
\end{Corollaire}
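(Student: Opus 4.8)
La démonstration consistera à appliquer le premier point du Théorème \ref{totalaction} à un diviseur adapté à la situation séparante, puis à identifier chacun des facteurs obtenus. L'observation clé est que, la courbe étant séparante, on dispose d'une \emph{orientation complexe} de $\RR\surg$ (l'une des deux orientations induites sur le bord des deux moitiés de $\surg\setminus\RR\surg$), et que $H$ est précisément la droite réelle engendrée par ces deux orientations. C'est cette structure qui permettra de transformer les facteurs de jets et de cotangentes du Théorème \ref{totalaction} en puissances de $H$, tandis que l'échange des deux moitiés par $\varphi$ rendra compte du signe $\varepsilon(\sigma_\varphi^{\RR\surg})$.

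On commencerait par choisir un diviseur $D$ associé à $(\det(N),c_{\det(N)})$ de support minimal : un point réel simple $x_s$ sur chaque composante connexe de $\RR\surg^-$ (soit $k_-$ points de multiplicité $1$) et des paires de points complexes conjugués, de multiplicité $\pm 1$ et disjointes du lieu réel, de sorte que $\deg(D)=\deg(N)$. La condition de compatibilité impose $\deg(N)\equiv k_-\pmod 2$, donc la partie complexe conjuguée de $D$ est de degré pair $\deg(N)-k_-$ et tous les points de $D$ sont de multiplicité $\pm 1$. Le Théorème \ref{totalaction} fournit alors un isomorphisme canonique entre $\Det$ et
\[
\PV \otimes \DD_D(N) \otimes \det(H^1(\surg,\RR)_{-1})^{\otimes\rang(N)}\otimes\det(\RR\jet_{D})\dd\bigotimes_{s} T_{x_s}^*\RR\surg,
\]
le dernier produit ne portant que sur les $k_-$ points réels, tous de multiplicité $1$.

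Il restera à identifier les facteurs géométriques. Pour le terme de jets, on utiliserait la décomposition $\RR\jet_D=\RR\jet_{D^+}\oplus(\RR\jet_{D^-})^*$ de la Notation \ref{notjet} : comme les $x_s$ sont de multiplicité $1$, la partie réelle de $\det(\RR\jet_D)$ est le déterminant de la somme des droites triviales indexées par les composantes de $\RR\surg^-$, soit $\det(H_0(\RR\surg^-,\RR))$, sur lequel $\varphi$ agit par $\varepsilon(\varphi^-_{\RR\surg})$ ; la partie provenant des paires conjuguées est un produit de déterminants d'espaces réels de dimension deux munis de leur structure complexe naturelle, dont l'orientation change avec l'ordre des deux points conjugués, c'est-à-dire avec l'orientation complexe, ce qui donne $H^{\otimes\frac{\deg(N)-k_-}{2}}$ (l'exposant n'étant pertinent que modulo $2$, $H$ étant de $2$-torsion). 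De même, l'orientation complexe de $\RR\surg$ oriente chaque $T_{x_s}^*\RR\surg$, d'où $\bigotimes_s T_{x_s}^*\RR\surg\cong H^{\otimes k_-}$. En regroupant, $\det(\RR\jet_D)\otimes\bigotimes_s T_{x_s}^*\RR\surg \cong \det(H_0(\RR\surg^-,\RR))\otimes H^{\otimes\frac{\deg(N)+k_-}{2}}$, ce qui fait apparaître exactement l'exposant voulu.

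Le point principal, et le plus délicat, sera l'identification canonique $\DD_D(N)\cong O_{\RR N}$, compatible avec l'action de $\riso{N}$. Pour l'établir, j'utiliserais la Proposition \ref{rapp} et la Définition \ref{defd} : une orientation de $\DD_D(N)$ équivaut à une section méromorphe réelle de diviseur $D$ modulo $\raut{N}^+$, et le long d'une composante orientable $c\subset\RR\surg^+$ (où $D$ n'a aucun point) une telle section est non nulle, donc fournit une orientation de $(\RR N)_{|c}$, c'est-à-dire l'information $O_{\RR N}$. Le c\oe ur de l'argument sera de vérifier que $\Phi\in\raut{N}$ préserve l'orientation de $\DD_D(N)$ si et seulement si $\det(\Phi_o)=+1$ : les transformations $f_c$ des composantes orientables sont hors de $\raut{N}^+$ et inversent l'orientation de $(\RR N)_{|c}$, celles des composantes non orientables et des paires conjuguées sont dans $\raut{N}^+$ et laissent $O_{\RR N}$ fixe, et c'est ici que l'hypothèse séparante interviendra de façon essentielle pour traiter les courbes globalement stables (en les ramenant aux générateurs standards). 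Lorsque toutes les composantes de $\RR\surg$ sont non orientables, $\DD_D(N)$ et $O_{\RR N}$ sont tous deux triviaux, conformément à la Définition \ref{defd}. La formule de signes du second point s'en déduira alors par naturalité : l'action de $\Phi$ (resp. $\varphi$) sur $\Det$ est le produit de ses actions sur les facteurs, à savoir $\varepsilon(\Phi_{\PPP^+})$ sur $\PV$, $\det(\Phi_o)$ sur $O_{\RR N}$, $\varepsilon(\varphi^-_{\RR\surg})$ sur $\det(H_0(\RR\surg^-,\RR))$, $\det(\varphi^*)^{\rang(N)}$ sur $\det(H^1(\surg,\RR)_{-1})^{\otimes\rang(N)}$ et $\varepsilon(\sigma_\varphi^{\RR\surg})^{\frac{\deg(N)+k_-}{2}}$ sur $H^{\otimes\frac{\deg(N)+k_-}{2}}$, dont le produit est bien $\varepsilon(\Phi_{\PPP^+})\det(\Phi_o)\varepsilon_N(\varphi)$.
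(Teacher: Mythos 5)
Votre démonstration est correcte et suit essentiellement la même voie que celle de l'article : application du Théorème \ref{totalaction} au diviseur minimal (un point réel simple par composante non orientable de $\RR N$ plus des paires conjuguées), identification des facteurs de jets et de cotangentes avec $\det(H_0(\RR\surg^-,\RR))\otimes H^{\otimes\frac{\deg(N)+k_-}{2}}$ via l'orientation complexe de $\RR\surg$, puis identification de $\DD_D(N)$ avec $O_{\RR N}$ au moyen des sections méromorphes restreintes aux composantes orientables. La seule différence est cosmétique : l'article travaille avec le second point du Théorème \ref{totalaction} au-dessus de tout $\surg^{(\ud_{min})}$ (ce qui règle d'emblée l'indépendance du choix du diviseur) et cite le Corollaire 4.1 de \cite{article1} là où vous re-déduisez l'équivalence entre $\raut{N}^+$ et la préservation de $O_{\RR N}$ à partir de la Proposition \ref{rapp}.
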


\begin{proof}
Prenons $\ud_{min}$ comme dans le Corollaire \ref{reponse}. D'après le Théorème \ref{totalaction}, il existe un isomorphisme canonique entre fibrés en droites réelles au-dessus de $pr_1^*\rop$
\[
pr_1^* \Det = \PV\otimes\det(H^1(\surg,\RR)_{-1})^{\otimes\rang(N)}\otimes \pi^*(\DD_{(\ud_{min})}\otimes \det(\RR\jet_{(\ud_{min})}))\otimes (pr_2\circ\pi)^*(T_{(\ud_{min})}).
\]
Or, comme la courbe est séparante, on a un isomorphisme canonique entre $T_{(\ud)_{min}}$ au-dessus de $\surg^{(\ud_{min})}$ et le fibré produit $\surg^{(\ud_{min})}\times H^{\otimes k_-}$. D'autre part, on a un isomorphisme canonique entre $\det(\RR\jet_{(\ud_{min})})$ et $(\RR J(\surg)\times \surg^{(\ud_{min})})\times (\det(H_0(\RR\surg^-,\RR))\otimes H^{\otimes\frac{\deg(N)-k_-}{2}})$. Enfin, le fibré $\DD_{(\ud_{min})}$ au-dessus de $\RR J(\surg)\times \surg^{(\ud_{min})}$ est canoniquement isomorphe au fibré produit $(\RR J(\surg)\times \surg^{(\ud_{min})})\times O_{\RR N}$. En effet, l'application
\[
\begin{array}{l c l}
\DD_{(\ud_{min})} & \rightarrow & O_{\RR N} \\
\left[ \DB^+_{D,J}\right] & \mapsto & \bigotimes_{c \subset \RR\surg^+}\sigma_{|c},
\end{array}
\]
où $\sigma$ est une section méromorphe réelle de $(\det(N),c_{\det(N)},\DB^+_{D,J})$ de diviseur $D$ et donnant la polarisation de $\DB^+_{D,J}$, est bien définie d'après le Corollaire 4.1 de \cite{article1} et donne l'isomorphisme canonique voulu. Ceci termine la démonstration.
\end{proof}

\paragraph{Le cas $Spin$}\label{parspin}

Lorsque la courbe n'est pas séparante, nous ne trouvons pas de résultat aussi général que le Corollaire \ref{actionsepa}. Nous pouvons toutefois préciser le Théorème \ref{totalaction}. Nous utilisons pour cela des structures $Spin$ associées à un fibré vectoriel complexe muni d'une structure réelle $(N,c_N)$ (qui est supposé de degré pair), dont nous notons $\RR Spin(N)$ l'ensemble (voir \cite{degitkha} ou le \S 4.1.2 de \cite{article1}). Insistons ici sur le fait que si l'on a encore une bijection entre les
structures $Spin$ sur $N$ (muni d'une structure holomorphe) et les classes d'isomorphisme
de racines carrées de son fibré déterminant $\det(N)$ (voir \cite{atiyah}),
celle-ci dépend de la structure holomorphe sur $N$. Ainsi lorsque nous
parlerons d'une structure $Spin$ sur $N$ comme d'une racine carrée du
fibré déterminant, nous ferons attention à préciser l'opérateur de
Cauchy-Riemann sur $N$ considéré.

Enfin, comme nous considèrerons des structures $Spin$ réelles sur
$(N,c_N)$ (lorsque $w_1(\RR N) = 0$), remarquons qu'une telle
structure, donnée par un fibré en droite holomorphe $L$, induit une semi-orientation sur le fibré $\RR N$;
c'est-à-dire que si nous fixons une orientation sur une des
composantes de $\RR N$, toutes les autres sont automatiquement
orientées (voir \S 3.4 de \cite{degitkha}). De plus, comme nous supposons toujours que $\RR\surg$ est non vide, la structure réelle $c_{\det(N)}$ se relève en une structure réelle $c_L$ sur $L$ unique à multiplication par $i$ près. Le choix d'un tel relevé fixe une orientation sur $\RR N$, et l'autre relevé induit l'autre orientation. Notons $O^{\xi}_{\RR N}$ la droite réelle engendrée par les deux semi-orientations de $\RR N$ associées à la structure $\xi$. Si un automorphisme $\Phi\in\riso{N}$ préserve une structure $Spin$ réelle $\xi$ sur $N$, alors $\Phi$ induit une permutation $\Phi_{o,\xi}$ sur les semi-orientations associées à $\xi$.

 Par ailleurs, la première classe de
Stiefel-Whitney $w_{\xi}$ d'une structure $Spin$ réelle $\xi$ est toujours bien
définie et ne dépend pas de la structure holomorphe choisie sur $N$. Pour $w\in H^1(\RR\surg,\ZZ/2\ZZ)$, on pose $H^1_w(\RR\surg,\RR) = \dd\bigotimes_{w([\RR\surg]_i) = 1} H^1((\RR\surg)_i,\RR)$.

\begin{Corollaire}\label{actionspin}
  Soit $(N,c_N)$ un fibré vectoriel complexe sur une courbe $(\surg,c_\sur)$ de partie réelle non vide. Supposons de plus que le degré de $N$ est pair et que la première classe de Stiefel-Whitney de $\RR N$ est nulle. Il existe un isomorphisme canonique
\[
\Det = \PV\otimes\det(H^1(\surg,\RR)_{-1})^{\otimes\rang(N)}\otimes (O_{\RR N}^{\xi})^{\otimes 1-g} \otimes H^1_{w_\xi}(\RR\surg,\RR).
\]
entre fibrés en droites au-dessus de $\rop\times \RR Spin(N)$.

En particulier, soit $\varphi\in\rdif$ et $\Phi\in\riso{N}$ un relevé de $\varphi$ qui préserve une structure $Spin$ réelle $\xi\in\RR Spin(N)$. Le signe de l'action de $\Phi$ sur les orientations du fibré $\Det$ est donné par
\[
\varepsilon(\Phi_{\PPP^\pm})\varepsilon(\Phi_{o,\xi})^{1-g} \varepsilon(\sigma_\varphi^{w_\xi})\det(\varphi^*)^{\rang(N)},
\]
où $\sigma_\varphi^{w_\xi}$ est la permutation induite par $\varphi$ sur l'ensemble des orientations des composantes de $\RR\surg$ sur lesquelles $w_\xi$ est non nulle.
\end{Corollaire}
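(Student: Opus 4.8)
La stratégie consiste à appliquer le Théorème \ref{totalaction} à un diviseur $D$ associé à $(\det(N),c_{\det(N)})$ choisi adapté à la structure $Spin$, puis à identifier les trois facteurs restants $\pi^*\DD_D(N)$, $\det(\RR\jet_D)$ et $\bigotimes_{x\in D_{|\RR\surg}}(T_x^*\RR\surg)^{\otimes\max(0,mult_D(x))}$ avec $(O_{\RR N}^{\xi})^{\otimes 1-g}$ et $H^1_{w_\xi}(\RR\surg,\RR)$, au-dessus de $\rop\times\rspin(N)$. Comme $\RR N$ est orientable, un représentant de $\xi$ fournit, pour une structure holomorphe fixée sur $\det(N)$, une racine carrée réelle $L$ de $(\det(N),c_{\det(N)})$, de degré $\tfrac12\deg(N)$ et dont la partie réelle a pour première classe de Stiefel-Whitney $w_\xi$. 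On choisirait un diviseur réel $D_L$ à points de multiplicité $\pm 1$ avec $\pd(D_L)=c_1(L)$ et $\pd((D_L)_{|\RR\surg})=w_\xi$, possédant exactement un point réel positif sur chaque composante de $\RR\surg$ où $w_\xi$ est non nulle, et on poserait $D=2D_L$. Ce diviseur est bien associé à $(\det(N),c_{\det(N)})$, étant de degré $\deg(N)$ et de degré pair sur chaque composante réelle, conformément à $w_1(\RR N)=0$.

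Avec ce choix, les facteurs tangents se calculent directement. Les points réels de $D$ étant de multiplicité $+2$, le terme $\bigotimes_{x\in D_{|\RR\surg}}(T_x^*\RR\surg)^{\otimes 2}$ est canoniquement orienté (c'est un carré), tandis que leur contribution à $\det(\RR\jet_D)$ est $\bigotimes_i T_{x_i}^*\RR\surg$, le produit portant sur les composantes où $w_\xi\neq 0$. L'isomorphisme canonique $T_x^*(\RR\surg)_i\cong H^1((\RR\surg)_i,\RR)$ pour un cercle (tous deux duaux canoniques du torseur d'orientation) identifie alors ce terme à $H^1_{w_\xi}(\RR\surg,\RR)$. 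Quant aux points complexes conjugués, leur contribution à $\det(\RR\jet_D)$ est canoniquement orientée par la structure complexe, puisque pour un espace complexe $W$ la partie réelle de $W\oplus\overline{W}$ s'identifie à $W$. Il ne reste donc qu'à établir l'isomorphisme $\DD_{2D_L}(N)\cong(O_{\RR N}^{\xi})^{\otimes 1-g}$.

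C'est là le point central et l'obstacle principal. L'idée est qu'une section méromorphe réelle de $(\det(N),c_{\det(N)})$ de diviseur $2D_L$ s'écrit, pour une structure holomorphe où $D_L$ est linéairement équivalent à $L$, comme le carré $\eta^2$ d'une section méromorphe réelle $\eta$ de $L$; un tel carré est canoniquement positif et induit sur $\RR N$ la semi-orientation $o^{\xi}$ associée à $\xi$, les deux relevés $c_L$, $ic_L$ de $c_{\det(N)}$ donnant les deux semi-orientations. Comme $\DD_{D}(N)=\DD_D(\det(N))$, la multiplicité $1-g$ avec laquelle intervient $o^{\xi}$ reflète un décompte d'indice sur le fibré en droites $\det(N)$: des $\deg(N)+1-g$ dimensions de l'indice réel de l'opérateur, la part $\deg(N)$ est absorbée par les points du diviseur déjà traités, et il reste $1-g$ copies de l'orientation de $\RR N$, c'est-à-dire de la semi-orientation. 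La difficulté est que $\eta$ n'existe pas pour toute structure complexe de $\RR J(\surg)$; rendre l'identification précise demandera de comparer la polarisation $\eta^2$ à une base du noyau et du conoyau réels et d'en contrôler le signe via la Proposition \ref{rapp} et le Lemme \ref{transfod}, comme dans la démonstration du cas séparant (Corollaire \ref{actionsepa}).

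Une fois l'isomorphisme de fibrés établi, la formule pour le signe de l'action se déduit en lisant l'action de $(\Phi,\varphi)$ sur chaque facteur: $\Phi$ agit sur $\PV$ par $\varepsilon(\Phi_{\PPP^\pm})$ (les signes sur $Pin^+$ et $Pin^-$ coïncidant ici d'après la Proposition \ref{pinori}, puisque $\RR N$ est orientable), $\varphi$ agit sur $\dethun^{\otimes\rang(N)}$ par $\det(\varphi^*)^{\rang(N)}$, $\Phi$ agit sur $(O_{\RR N}^{\xi})^{\otimes 1-g}$ par $\varepsilon(\Phi_{o,\xi})^{1-g}$ lorsqu'il préserve $\xi$, et $\varphi$ agit sur $H^1_{w_\xi}(\RR\surg,\RR)$ par la signature $\varepsilon(\sigma_\varphi^{w_\xi})$ de la permutation induite sur les orientations des composantes où $w_\xi$ est non nulle. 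Le produit de ces quatre signes fournit la formule annoncée, en cohérence avec le Corollaire \ref{reponse}.
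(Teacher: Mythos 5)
Your setup coincides with the paper's: you apply the Théorème \ref{totalaction} to a divisor all of whose real points have even multiplicity (one doubled point on each component of $\RR\surg$ where $w_\xi$ is nonzero), and your identification of $\det(\RR\jet_D)$ with $H^1_{w_\xi}(\RR\surg,\RR)$, together with the canonical orientation of the squared cotangent factor and of the conjugate-point contribution, is exactly what the paper does. The final reading-off of the four signs is also correct, including the remark that $\varepsilon(\Phi_{\PPP^+})=\varepsilon(\Phi_{\PPP^-})$ here because $\RR N$ is orientable (Proposition \ref{pinori}).

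The gap is the step you yourself flag as the central one: the isomorphism $\DD_D(N)\cong (O^{\xi}_{\RR N})^{\otimes 1-g}$. Your route via $\eta^2$ only makes sense for those $J$ and those polarized sections for which $\OO_{\surg,J}(D/2)$ represents the given structure $\xi$; it defines nothing over all of $\RR J(\surg)\times\RR Spin(N)$, and it does not explain the exponent $1-g$ beyond an informal index count. The paper's mechanism is different and is precisely what is missing: a polarized section $[\DB^+_{D,J}]$ always induces a real $Spin$ structure $\xi'$ (that of $\OO_{\surg,J}(D/2)$) together with an orientation of $\RR N$, and one transports $\xi$ to $\xi'$ by an automorphism $h\in\raut{N}$ which, by the Lemme 4.4 de \cite{article1}, is unique only up to homotopy and up to the sign $h\mapsto -h$. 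The exponent $1-g$ records exactly this ambiguity: $-\id$ acts on $\Det[\det(N)]$ by $(-1)^{\ind}=(-1)^{1-g}$ since $\deg(N)$ is even, so for $g$ odd the sign is invisible and $\DD_D(N)$ is canonically trivialized, while for $g$ even the choice of the lift of $h$ lying in $\raut{N}^+$ ties an orientation of $\DD_D(N)$ to a semi-orientation of $\RR N$, i.e.\ to $O^{\xi}_{\RR N}$. One must then check, via an $f\in\raut{N}^+$ comparing two polarized sections inducing the same orientation of $\DD_D(N)$, that the construction is well defined and independent of $D$; none of this appears in your argument, so the central identification remains unproven as written.
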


\begin{proof}
Supposons pour simplifier que $N$ est de rang $1$. Prenons une structure $Spin$ réelle $\xi$ sur $N$ de première classe de Stiefel-Whitney $w\in\hczdeux{\RR\surg}$. Fixons un point $x_i$ sur chaque composante de $\RR\surg$ sur laquelle $w$ est non nulle et prenons un diviseur $D$ associé à $N$ de la forme $D = \dd\sum_{i=1}^l 2x_i + 2\frac{\deg(N)-2l}{4}(z+\overline{z})$. D'après le Théorème \ref{totalaction}, il existe un isomorphisme canonique
\[
\Det = \pi^*\DD_D(N)\otimes \det(H^1(\surg,\RR)_{-1}) \otimes \det(\RR \jet_D),
\]
car les points réels de $D$ sont tous de multiplicité $2$. De plus,
\[
\det(\RR\jet_D) = \dd\bigotimes_{i = 1}^l T^*_{x_i} \RR\surg = H^1_w(\RR\surg,\RR),
\]
car la multiplicité de $z$ est paire. 

Il nous faut vérifier que $\pi^*\DD_D(N)\otimes (O_{\RR N}^\xi)^{\otimes 1-g}$ est naturellement orienté. Prenons tout d'abord un élément $[\DB^+_{D,J}]$ de $\DD_D(N)$. Remarquons qu'une section méromorphe réelle $s$ associée à $\DB^+_{D,J}$ induit une structure $Spin$ réelle sur $N$: au-dessus d'un lacet sur $\surg$ qui ne passe pas par un des points $x_i$, $z$ ou $\overline{z}$, $s$ donne une trivialisation de $N$ qui, par définition se relève dans la structure $Spin$ induite. Si on homotope un lacet $\gamma$ sur $\surg$ pour le faire traverser un des points $z$ ou $\overline{z}$ et obtenir un lacet $\gamma'$, alors on a 
\[
[s\circ\gamma] = [s\circ\gamma'] + 2\frac{\deg(N)-2l}{4}f = [s\circ\gamma']\in H_1(R_{N}^+,\ZZ/2\ZZ),
\]
où $f$ est la classe de la fibre. Si on homotope un lacet $\gamma$ sur $\surg$ pour le faire traverser un des points $x_i$ et obtenir un lacet $\gamma'$, alors on a 
\[
[s\circ\gamma] = [s\circ\gamma'] + 2f = [s\circ\gamma']\in H_1(R_{N}^+,\ZZ/2\ZZ).
\]
 De plus, la structure $Spin$ réelle donnée par $\OO_{\surg,J}(\frac{D}{2})$ est la même que celle induite par une section méromorphe associée à $\DB^+_{D,J}$, et nous la noterons $\xi'$. La section $s$ induit une orientation de $\RR N$ qui est compatible avec la semi-orientation donnée par $\xi'$.

Si $[\DB^{',+}_{D,J'}]$ est un autre élément de $\DD_D(N)$ qui induit la même orientation de $\DD_D(N)$ que $[\DB^+_{D,J}]$, notons $\xi''$ la nouvelle structure $Spin$ réelle obtenue. Alors, d'après le Lemme 4.4 de \cite{article1}, il existe un automorphisme $f\in\raut{N}^+$ uniquement défini à homotopie près tel que $f^* \xi' = \xi''$ et $f$ envoie l'orientation de $\RR N$ fixée par $[\DB^+_{D,J}]$ sur celle fixée par $[\DB^{',+}_{D,J'}]$.

D'autre part, il existe un automorphisme $h\in\raut{N}$ qui envoie la structure $\xi$ sur $\xi'$. De plus, d'après le Lemme 4.4 de \cite{article1}, l'automorphisme $h$ est unique à homotopie et multiplication par $-1$ près. Si $g$ est impair, alors $h$ et $-h$ ont même action sur $\Det$. La structure $\xi$ induit une orientation de $\DD_D(N)$ donnée par $h^*[\DB^+_{D,J}]$. Celle-ci ne dépend pas du choix initial de la section $[\DB^+_{D,J}]$, car l'orientation obtenue en utilisant $[\DB^{',+}_{D,J'}]$ est donnée par $h^*f^*[\DB^{',+}_{D,J'}]$ et $f\in\raut{N}^+$. Si $g$ est pair, $h$ et $-h$ ont des actions opposées sur $\Det$. Supposons par exemple que $h\in \raut{N}^+$. Alors $h^*[\DB^+_{D,J}]$ induit une semi-orientation de $\xi$. Celle-ci ne dépend pas non plus du choix initial de la section $[\DB^+_{D,J}]$, car la semi-orientation obtenue en utilisant $[\DB^{',+}_{D,J'}]$ est donnée par $h^*f^*[\DB^{',+}_{D,J'}]$ et $f$ envoie l'orientation de $\RR N$ fixée par $[\DB^+_{D,J}]$ sur celle fixée par $[\DB^{',+}_{D,J'}]$. Ceci fournit donc un isomorphisme $\DD_D(N)\rightarrow O^\xi_{\RR N}$.

On vérifie enfin que l'isomorphisme obtenu ne dépend pas du diviseur $D$ choisit : tous les autres qui conviennent lui sont homotopes, et la construction précédente dépend continûment de $D$.
\end{proof}

\section{Orientabilité des espaces de modules de courbes réelles}\label{module}

Soit $(X,\omega,c_X)$ une variété symplectique réelle de dimension $2n$ au moins $4$. Nous nous intéressons à l'orientabilité des espaces de modules de courbes réelles dans $X$. Nous rappelons tout d'abord succintement la construction de ces espaces (en suivant principalement \cite{shev} et \cite{wel1}) ainsi que le rapport avec le fibré déterminant. Nous énonçons ensuite des résultats concernant leur orientabilité.

\subsection{Espaces de modules et fibré déterminant}\label{espmod}

Soit $\surg$ une surface réelle compacte orientée de genre $g$, et $l$ un entier assez grand. Nous notons comme précédemment $J(\surg)$ (resp. $J_\omega(X)$) l'espace des structures complexes de classe $C^l$ sur $\surg$ compatibles avec l'orientation fixée sur la surface (resp. sur $X$ et calibrées par $\omega$). Le sous-ensemble $\RR J_\omega(X)$ de $J_\omega(X)$ est formé des structures presque complexe $J$ telles que $c_X$ est $J$-antiholomorphe. C'est une variété de Banach contractile (voir la Proposition 1.1 de \cite{wel1}).

\begin{Definition}
  Une courbe pseudo-holomorphe paramétrée de genre $g$ dans $X$ est un triplet $(u,J_\sur,J)\in L^{k,p}(\surg,X)\times J(\surg)\times J_\omega(X)$, où $1\ll k \ll l$ et $2<p$, vérifiant
\[
\d u + J\circ \d u \circ J_\sur = 0.
\]
On dit qu'une courbe pseudo-holomorphe $(u,J_\sur,J)$ réalise la classe $d\in H_2(X,\ZZ)$ si $u_*([\surg]) = d$. Enfin, une courbe $(u,J_\sur,J)$ est dite injective quelque part s'il existe un ouvert non vide $U\subset \surg$ tel que $u$ est une immersion sur $U$ et
\[
\forall x\in U,\ u^{-1}\{u(x)\} = \{x\}.
\]
\end{Definition}

\begin{Rem}
  Dans la suite, nous ne considèrerons que des courbes injectives quelque part. C'est une hypothèse technique pour éviter d'éventuels problèmes analytiques dans l'étude des espaces de modules de courbes.
\end{Rem}

\begin{Definition}
Pour $d\in H_2(X,\ZZ)$, et $r \in \NN$, on note $\MP_{g,r}^d(X)$ l'espace de modules des courbes pseudo-holomorphes paramétrées de genre $g$ dans $X$, injectives quelque part, réalisant la classe $d$, avec $r$ points marqués, c'est-à-dire,

\begin{multline*}
\MP_{g,r}^d(X) =  \left\{(u,J_\sur,J,\underline{z})\in L^{k,p}(\surg,X)\times J(\surg)\times J_\omega(X)\times\surg^r\ |\right.  \d u + J\circ \d u \circ J_\sur = 0\\
\begin{array}{l}
 u_*([\surg]) = d \\
 u \text{ est injective quelque part} \\
 \forall 1\leq i\neq j\leq r,\ z_i\neq z_j\left.\right\}.
\end{array}
\end{multline*}

\end{Definition}

Prenons une permutation $\tau$ de $\{1,\ldots,r\}$ d'ordre $2$. Comme expliqué dans le \S 1.3 de \cite{wel1}, le groupe $Diff(\surg)$ des difféomorphismes de classe $C^{l+1}$ de $\surg$ agit sur $\MP_{g,r}^d(X)$ par reparamétrage:
\[
\begin{split}
(\phi,(u,J_\sur,J,\underline{z}))\in Diff(\surg)\times\MP_{g,r}^d(X)\hspace{7cm} \\ \mapsto \left\{\begin{array}{l r}\left(u\circ\phi^{-1},(\phi^{-1})^*J_\sur,J,\phi(\underline{z})\right) & \text{si } \phi\in Diff^+(\surg)\\
\left(c_X\circ u\circ\phi^{-1},-(\phi^{-1})^*J_\sur,J,(\phi(z_{\tau(1)}),\ldots,\phi(z_{\tau(r)}))\right) & \text{sinon}.
\end{array}\right.
\end{split}
\]
Une courbe pseudo-holomorphe injective quelque part est dite réelle si elle est fixée par l'action d'un élément non trivial $c_\sur$ de $Diff(\surg)$ qui est alors unique; autrement dit, un élément de $\MP_{g,r}^d(X)$ n'est fixée que par un seul difféomorphisme non trivial. De plus, le difféomorphisme $c_\sur$ est une involution qui renverse l'orientation fixée sur $\surg$, c'est-à-dire une structure réelle (voir le Lemme 1.3 de \cite{wel1}). \`A chaque structure réelle $c_\sur$ est alors associé $\RR_{c_\sur}\MP_{g,r}^d(X)$, l'ensemble de ses points fixes dans $\MP_{g,r}^d(X)$. Notons $\RR_{\tau}\MP_{g,r}^d(X)$ la réunion de tous ces ensembles. Cet espace est une variété de Banach séparable de classe $C^{l-k}$ (voir Proposition $1.4$ de \cite{wel1}).

Remarquons qu'une structure réelle $c_\sur$ agit sur $\surg^r$ par
\[
(z_1,\ldots,z_r)\rightarrow (c_\sur(z_{\tau(1)}),\ldots,c_\sur(z_{\tau(r)})).
\]
 Sur une courbe réelle marquée, la configuration de points $\underline{z}$ est un élément de $\RR_{c_\sur}(\surg^r)$, le lieu des points fixes de l'action de $c_\sur$ sur $\surg^r$.

Prenons une structure réelle $c_\sur$ sur $\surg$ et considérons $(u,J_\sur,J,\underline{z})\in\RR_{c_\sur}\MP_{g,r}^d(X)$. Le fibré $E_u = u^*TX$ est un fibré vectoriel complexe sur $(\surg,c_\sur)$, de rang $n$, de degré $c_1(X)d$, et $c_X$ induit une structure réelle $c_{E_u}$ dessus. On note $H^1_{c_1(X)d}(\RR\surg,\ZZ/2\ZZ)\subset H^1(\RR\surg,\ZZ/2\ZZ)$ l'image réciproque de $c_1(X)d$ par l'application d'augmentation $w\in H^1(\RR\surg,\ZZ/2\ZZ)\mapsto \dd\sum_i w[(\RR\surg)_i] \in \ZZ/2\ZZ$. Ainsi, on a $w_1(\RR E_u)\in H^1_{c_1(X)d}(\RR\surg,\ZZ/2\ZZ)$.

\begin{Lemme}
  L'application $w_{c_\sur} : (u,J_\sur,J,\underline{z})\in\RR_{c_\sur}\MP_{g,r}^d(X)\mapsto w_1(\RR E_u)\in H_{c_1(X)d}^1(\RR\surg,\ZZ/2\ZZ)$ est localement constante.\qed
\end{Lemme}

Ainsi, on peut écrire $\RR_{c_\sur}\MP_{g,r}^d(X) = \dd\bigcup_{c\in H^1_{c_1(X)d}(\RR\surg,\ZZ/2\ZZ)}w_{c_\sur}^{-1}(c)$, où chaque $w_{c_\sur}^{-1}(c)$ est une union de composantes connexes de $\RR_{c_\sur}\MP_{g,r}^d(X)$. On décompose ainsi l'espace de modules selon le type topologique de la restriction à la courbe du fibré tangent à la partie réelle de la variété ambiante. On dira qu'un quadruplet d'entiers naturels $\ud$ est adapté à $c\in H^1_{c_1(X)d}(\RR\surg,\ZZ/2\ZZ)$, si $\ud$ est adapté à $(E_u,c_{E_u})$ pour chaque courbe $(u,J_\sur,J,\underline{z})\in w_{c_\sur}^{-1}(c)$ (voir \S \ref{diviseur}). Posons alors
\[
\RR_{\tau}\MP_{g,r,pol}^d(X) = \dd\bigsqcup_{\begin{subarray}{c}c_\sur\in Diff^-(\surg) \\ (c_\sur)^2=\id_\sur \end{subarray}}\bigsqcup_{\begin{subarray}{c}c\in H^1_{c_1(X)d}(\RR\surg,\ZZ/2\ZZ) \\ \ud \text{ adapté à } c \end{subarray}}w_{c_\sur}^{-1}(c)\times \surg^{(\ud)}
\]

Autrement dit, on a ajouté à chaque courbe une configuration réelle de points marqués signés supplémentaire formant un diviseur adapté au tiré en arrière du tangent à la variété ambiante. Insistons sur le fait que deux composantes $w_{c_\sur}^{-1}(c)\times \surg^{(\ud)}$ et $w_{c_\sur}^{-1}(c)\times \surg^{(\ud')}$ peuvent ne pas être disjointes en oubliant les signes des points marqués, mais elles apparaissent tout de même toutes les deux dans la réunion totale si $\ud\neq\ud'$.

Introduisons quelques fibrés sur ces espaces de modules.

\begin{Lemme}\label{fibr}
  \begin{enumerate}
  \item Il existe un fibré $\ZZ/2\ZZ$-principal $\PPP^+_X$ au-dessus de $\RR \MP_{g}^d(X)$ dont la fibre au-dessus d'un point $(u,J_{\surg},J)$ est $\dd\bigotimes_{i=1}^{b_0(\RR\surg)} Pin^+((\RR E_u)_i)$. De plus, l'action de $Diff^+(\surg)$ sur $\RR \MP_g^d(X)$ se relève naturellement sur $\PPP^+_X$.
  \item Il existe un fibré $\ZZ/2\ZZ$-principal $\DD$ au-dessus de $\RR\MP_{g,pol}^d(X)$ dont la fibre au-dessus d'un point $(u,J_{\surg},J,\ux)$ est $\DD_{J_\sur,D_{\ux}} (E_u)$ (voir la définition \ref{defd} au \S \ref{diviseur}). De plus, l'action de $Diff^+(\surg)$ sur $\RR \MP_{g,pol}^d(X)$ se relève naturellement sur $\DD$.
  \item Il existe un fibré en droites réelles $T_{pol}$ au-dessus de $\RR\MP_{g,pol}^d(X)$ dont la fibre au-dessus d'un point $(u,J_{\surg},J,\ux)$ est $\det(\RR\jet_{D_{\ux}})\otimes \dd\bigotimes_{x\in\ux^+\cap\RR\surg} T_{x}^*\RR\surg$ (voir la définition \ref{defjet} au \S \ref{diviseur}). De plus, l'action de $Diff^+(\surg)$ sur $\RR\MP_{g,pol}^d(X)$ se relève naturellement sur $T_{pol}$.
  \item Il existe un fibré en droites réelles $L_{r,pol}$ (resp. $L_r$) au-dessus de $\RR_{\tau}\MP_{g,r,pol}^d(X)$ (resp. au-dessus de $\RR_{\tau}\MP_{g,r}^d(X)$) dont la fibre au-dessus d'un point $(u,J_{\surg},J,\underline{z},\ux)$ est $\det(\RR_{\tau} \bigoplus_{z\in\underline{z}} T_z\surg)\otimes \det(T_{\ux}\RR \surg^{(\ud)})$ (resp. $\det(\RR_{\tau} \bigoplus_{z\in\underline{z}} T_z\surg)$). De plus, l'action de $Diff^+(\surg)$ sur $\RR\MP_{g,r,pol}^d(X)$ (resp. sur $\RR\MP_{g,r}^d(X)$) se relève naturellement sur $L_{r,pol}$ (resp. $L_r$).
  \item Il existe un fibré en droites réelles $\det(H^1(\surg,\RR)_{-1})$ au-dessus de $\RR\MP_{g}^d(X)$ dont la fibre au-dessus d'un point $(u,J_{\surg},J)$ est $\det(H^1(\surg,\RR)_{-1})$. De plus, l'action de $Diff^+(\surg)$ sur $\RR\MP_{g}^d(X)$ se relève naturellement sur $\det(H^1(\surg,\RR)_{-1})$.
  \end{enumerate}
\end{Lemme}

\begin{proof}
Notons $ev : (u,J_\sur,J,z)\in \RR \MP_g^d(X) \times \surg \mapsto u(z)\in X$ et $E = ev^*TX$. Ce dernier est un fibré vectoriel complexe, et $c_X$ induit une structure réelle $c_E$ dessus, de sorte que la restriction de $(E,c_E)$ au-dessus de $\{(u,J_\sur,J)\}\times\surg$ est égale à $(E_u,c_{E_u})$. Prenons un voisinage $U\subset \RR \MP_g^d(X)$ de $(u,J_\sur,J)$ qui trivialise $(E,c_E)$. C'est-à-dire que l'on a une application $F$ de classe $C^l$ entre $(E,c_E)_{|U}$ et $(E_u,c_{E_u})\times U$ telle que pour tout $(v,J'_\sur,J')\in U$, $F$ induise un isomorphisme entre $(E_v,c_{E_v})$ et $(E_u,c_{E_u})$. L'application $F$ induit un isomorphisme entre le fibré $\PPP^+_X$ restreint à $U$ et le fibré trivial $\dd\bigotimes_{i=1}^{b_0(\RR\surg)} Pin^+((\RR E_u)_i) \times U$. Ceci définit la structure de fibré $\ZZ/2\ZZ$-principal sur $\PPP^+_X$. De plus, un difféomorphisme $\varphi\in Diff^+(\surg)$ induit un isomorphisme $\varphi^* : (E_u,c_{E_u}) \rightarrow (E_{u\circ\varphi^{-1}},c_{E_{u\circ\varphi^{-1}}}) = (\varphi^{-1})^*(E_u,c_{E_u})$ naturel, ce qui induit une action naturelle de $Diff^+(\surg)$ sur $\PPP^+_X$.

Notons $\Pi : \RR\MP_{g,pol}^d(X) \rightarrow \RR\MP_g^d(X)$ l'application d'oubli des points marqués. Reprenons les mêmes notations que ci-dessus. Le fibré $\Pi^*(E,c_{E})$ est trivialisé par $F$ au-dessus de $U\times \surg^{\ud}$ pour tout les $\ud$ adaptés. On a de plus une application $\eta : U\times \surg^{\ud} \rightarrow \RR J(\surg)\times \surg^{\ud}$ d'oubli de $X$. L'application $F$ induit alors un isomorphisme entre le fibré $\DD$ restreint à $U\times \surg^{\ud}$ et le fibré $\eta^*\DD(E_u)$. Ceci donne à $\DD$ une structure de fibré $\ZZ/2\ZZ$-principal. L'action de $Diff^+(\surg)$ sur $\DD$ est donnée de la même façon que précédemment.

Les trois derniers points du Lemme sont immédiats.
\end{proof}

Notons $\Pi_r : \RR_{\tau} \MP_{g,r,pol}^d(X)\rightarrow \RR \MP_{g,pol}^d(X)$ et $\Pi : \RR_{\tau} \MP_{g,r,pol}^d(X)\rightarrow \RR \MP_{g}^d(X)$ les applications d'oubli des points marqués. Celles-ci sont $Diff^+(\surg)$-équivariantes. D'après le Lemme \ref{fibr}, nous avons cinq fibrés $\Pi^*\PPP^+_X$, $(\Pi_r)^*\DD$, $(\Pi_r)^*T_{pol}$, $L_{r,pol}$ et $\Pi^* \det(H^1(\surg,\RR)_{-1})$ sur $\RR_{\tau} \MP_{g,r,pol}^d(X)$.

Nous voulons maintenant considérer des courbes non paramétrées. Pour cela, on remarque que $Diff^+(\surg)$ agit librement sur $\RR_{\tau}\MP_{g,r,pol}^d(X)$. Nous notons $\RR_{\tau}\MM_{g,r,pol}^d(X)$ le quotient de $\RR_{\tau}\MP_{g,r,pol}^d(X)$ par l'action libre de $Diff^+(\surg)$. C'est une variété de Banach de classe $C^{l-k}$ (voir par exemple la Proposition 3.2.1 de \cite{MDS} ou bien le Corollaire 2.2.3 de \cite{shev}). Les fibrés $\Pi^*\PPP^+_X$, $(\Pi_r)^*\DD$, $(\Pi_r)^*T_{pol}$, $L_{r,pol}$ et $\Pi^* \det(H^1(\surg,\RR)_{-1})$ induisent des fibrés notés de la même façon sur $\RR_{\tau}\MM_{g,r,pol}^d(X)$.

  La projection naturelle $\pi : \RR\MM_{g}^d(X)\rightarrow \RR J_\omega(X)$ est une application Fredholm entre variétés de Banach (voir la Proposition 1.9 \cite{wel1}). Il existe un fibré en droites réelles $\ddet(\pi)$ au-dessus de $\RR\MM_{g}^d(X)$ dont la fibre en $[u,J_\sur,J]$ est $\Lambda^{\max}\ker(\d_{[u,J_{\sur},J]} \pi) \otimes \left(\Lambda^{\max}\coker(\d_{[u,J_{\sur},J]} \pi)\right)^*$ (voir \cite{MDS} Annexe A.2).

 \begin{Theoreme}\label{metath}
   Le fibré $\Pi^*\ddet(\pi)$ est canoniquement isomorphe à $\Pi^*\PPP^+_X \otimes (\Pi_r)^*(\DD \otimes T_{pol}) \otimes \Pi^* (\det(H^1(\surg,\RR)_{-1}))^{\otimes n-1}$.

   En particulier, pour une structure presque complexe $J\in\RR J_\omega(X)$ générique, $\RR_{\tau}\MM_{g,r,pol}^d(X,J) = (\pi\circ\Pi)^{-1}\{J\}$ est une variété lisse vérifiant
\[
w_1(\RR\MM_{g,r,pol}^d(X,J)) = w_1\left(\Pi^*\PPP^+_X \otimes (\Pi_r)^*(\DD \otimes T_{pol})\right)_{|J} +  w_1\left(L_{r,pol} \otimes \Pi^* (\det(H^1(\surg,\RR)_{-1}))^{\otimes n-1}\right)_{|J}.
\]
 \end{Theoreme}

 \begin{Rem}
   On peut remarquer que le fibré en droites réelles $\det(H^1(\surg,\RR)_{-1})$ sur $\RR \MM_g^d(X)$ est en fait le tiré en arrière d'un fibré sur $\RR\MM_g$ par l'application $\eta : \RR \MM_g^d(X) \rightarrow \RR\MM_g$ d'oubli de $X$. Et d'après le Corollaire \ref{delmum}, $w_1(\det(H^1(\surg,\RR)_{-1})) = \eta^*w_1(\RR\MM_g)$.
 \end{Rem}

La deuxième partie du Théorème \ref{metath} découle de la première en remarquant d'une part que l'application $\Pi$ est partout régulière, donc que $\coker(\d_{[u,J_\sur,J,\underline{z},\ux]}(\pi\circ\Pi)) = \coker(\d_{[u,J_\sur,J]}\pi)$, d'autre part que le noyau de $\d_{[u,J_\sur,J,\underline{z},\ux]}(\pi\circ\Pi)$ entre dans la suite exacte
\[
0\rightarrow \RR \bigoplus_{z\in\underline{z}} T_z\surg \oplus \RR T_{\ux}\surg^{\ud} \rightarrow \ker(\d_{[u,J_\sur,J,\underline{z},\ux]} (\pi\circ \Pi))\xrightarrow{\d_{[u,J_\sur,J,\underline{z},\ux]}\Pi} \ker(\d_{[u,J_\sur,J]}\pi)\rightarrow 0,
\]
puis en appliquant les Théorèmes de Sard et des fonctions implicites. Nous noterons dans la suite $\RR J_{\omega}^{reg}(X)$ le sous-ensemble de $\RR J_\omega(X)$ formé des valeurs régulières de $\pi$. Remarquons que le fibré $\ddet(\pi)$ est en quelque sorte le fibré des orientations de l'espace de modules universel relativement à $\RR J(\surg)$.

Le fibré vectoriel complexe $(E,c_E)$ sur $\RR\MP_g^d(X)\times \surg$ induit deux fibrés de Banach $\EE$ et $\EE'$ sur $\RR\MP_g^d(X)$ de fibres respectives  en $(u,J_\sur,J)$, $\EE_u = L^{k,p}(\surg,E_u)_{+1}$ et $\EE'_u = L^{k-1,p}(\surg,\Lambda_{J_\sur}^{0,1}\surg\otimes E_u)_{+1}$. L'action de $Diff^+(\surg)$ sur $\RR\MP_g^d(X)$ s'étend de façon naturelle à ces deux fibrés. De plus, étant donnée une courbe $(u,J_\sur,J)\in\RR\MP_{g}^d(X)$, le fibré $E_u$ est muni d'un opérateur de Cauchy Riemann réel généralisé noté $D_u$, dont la partie $J$-linéaire notée $\DB_u$ induit une structure holomorphe sur $E_u$ (voir Lemme 6.1.1 de \cite{ivshev}). On obtient ainsi un opérateur $D : \EE \rightarrow \EE'$ qui est de plus $Diff^+(\surg)$-équivariant (voir Lemme 1.5 de \cite{wel1}).

D'autre part, pour une structure réelle donnée $c_\sur$ sur $\surg$, on a deux fibrés de Banach $\TS$ et $\TS'$ au-dessus de $\RR J(\surg)$ de fibre respectives $\TS_{J_\sur} = L^{k,p}(\surg,T\surg)_{+1}$ et $\TS'_{J_{\sur}} = L^{k-1,p}(\surg,\Lambda_{J_\sur}^{0,1}\surg\otimes T\surg)_{+1}$ et un opérateur $\DB : \TS\rightarrow\TS'$ valant $\DB_{J_\sur}$ au point $J_\sur$. De plus, cet opérateur est $\RR Diff^+(\surg)$-équivariant. En tirant en arrière ces objets par l'application d'oubli de $X$, on obtient deux fibrés de Banach $\TS$ et $\TS'$ au-dessus de $\RR\MP_g^d(X)$ et un opérateur $\DB : \TS\rightarrow \TS'$ qui est $Diff^+(\surg)$-équivariant.

On notera $\ddet(D)$, respectivement $\ddet(\DB)$, les fibrés en droites réelles au-dessus de $\RR\MM_g^d(X)$ dont la fibre en une courbe $[u,J_\sur,J]$ est $\ddet(D_u)$, respectivement $\ddet(\DB_{J_\sur})$. Rappelons maintenant la Proposition \ref{fredholm} (voir \cite{shev} Corollaires $2.2.3$ et $1.5.4$, et la Proposition 1.9 de \cite{wel1}).

\begin{Proposition}\label{fredholm}
  Il existe un isomorphisme canonique $\ddet(\pi) = \ddet(D)\otimes \ddet(\DB)^*$.\qed
\end{Proposition}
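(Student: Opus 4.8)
Le plan est de ramener le calcul de $\ddet(\pi)$ au déterminant de la linéarisation verticale de l'équation $\DB$, puis de scinder celle-ci grâce à la suite exacte de déformation reliant les déformations de la source, celles de l'application et celles du couple.

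Fixons une courbe $m = [u,J_\sur,J]\in\RR\MM_g^d(X)$. Comme l'espace de modules universel est découpé transversalement le long des courbes injectives quelque part (\cite{wel1}, Proposition 1.9, et \cite{shev}), le noyau et le conoyau de $\d_m\pi$ se calculent à partir d'un complexe de déformation. J'introduirais le complexe à trois termes
\[
L^{k,p}(\surg,T\surg)_{+1}\xrightarrow{\rho} L^{k,p}(\surg,E_u)_{+1}\oplus L^{k-1,p}(\surg,\Lambda^{0,1}\surg\otimes T\surg)_{-1}\xrightarrow{\mathbf{F}} L^{k-1,p}(\surg,\Lambda^{0,1}\surg\otimes E_u)_{+1},
\]
où $\rho(X) = (\d u(X),\mathcal{L}_X J_\sur)$ est l'action infinitésimale du reparamétrage et $\mathbf{F}(\xi,\dot J_\sur) = D_u\xi + \frac{1}{2} J\circ\d u\circ\dot J_\sur$ est la linéarisation verticale sur l'espace paramétré. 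On a $\mathbf{F}\circ\rho = 0$ car un reparamétrage préserve la $J$-holomorphie ; il s'agit donc d'un complexe. L'action libre de $Diff^+(\surg)$ rend $\rho$ injective (donc $H^0 = 0$), et la transversalité de l'espace universel donne $H^1 = \ker\d_m\pi$ et $H^2 = \coker\d_m\pi$. Ainsi $\ddet(\pi)$ est le déterminant de ce complexe.

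Je décomposerais ensuite ce complexe par une suite exacte courte de complexes. Le sous-complexe $[\,0\to L^{k,p}(\surg,E_u)_{+1}\xrightarrow{D_u}L^{k-1,p}(\surg,\Lambda^{0,1}\surg\otimes E_u)_{+1}]$ a pour cohomologie $\ker D_u$ et $\coker D_u$ ; le complexe quotient $[\,L^{k,p}(\surg,T\surg)_{+1}\xrightarrow{\rho_C}L^{k-1,p}(\surg,\Lambda^{0,1}\surg\otimes T\surg)_{-1}\to 0]$ a pour cohomologie $\ker$ et $\coker$ de l'opérateur $\DB$ sur $T\surg$. Ici $\rho_C(X) = \mathcal{L}_X J_\sur = \DB_{T\surg,J_\sur}(J_\sur X)$ (voir la démonstration du Lemme \ref{tangent}), et la multiplication par $J_\sur$ identifie de façon canonique les sous-espaces propres $\pm 1$ en commutant avec l'opérateur de Cauchy-Riemann, de sorte que le déterminant correspondant est bien $\ddet(\DB)$. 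La suite exacte longue de cohomologie associée est la suite à six termes
\[
0\to \ker\DB\to \ker D\to \ker\d\pi\to\coker\DB\to\coker D\to\coker\d\pi\to 0,
\]
c'est-à-dire la suite de déformation-obstruction de $[u,J_\sur]$ à $J$ fixé. La multiplicativité des droites déterminants le long d'une suite exacte fournit alors l'isomorphisme canonique $\ddet(\pi)\cong\ddet(D)\otimes\ddet(\DB)^*$, les droites $\ddet(D)=\det(\ker D)\otimes(\det\coker D)^*$ et $\ddet(\DB)=\det(\ker\DB)\otimes(\det\coker\DB)^*$ apparaissant avec les exposants voulus.

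La principale difficulté réside dans les points suivants. D'abord, vérifier que la cohomologie du complexe calcule réellement $\ker$ et $\coker$ de $\d\pi$ : cela repose sur la transversalité de l'espace de modules universel pour les courbes injectives quelque part et sur la liberté de l'action du reparamétrage (\cite{wel1}, Proposition 1.9, et \cite{shev}, Corollaires 1.5.4 et 2.2.3). Ensuite, l'identification de $\rho_C$ avec l'opérateur $\DB$ sur $T\surg$, qui exige le calcul $\mathcal{L}_X J_\sur = \DB_{T\surg,J_\sur}(J_\sur X)$ et l'identification $J_\sur$-équivariante des sous-espaces propres, compatible aux déterminants. Enfin, il faut s'assurer que toutes ces identifications dépendent continûment de $m$, de sorte que l'on obtienne un isomorphisme de fibrés en droites au-dessus de $\RR\MM_g^d(X)$ et non un simple isomorphisme fibre à fibre ; le reste n'est que de l'algèbre multilinéaire des droites déterminants.
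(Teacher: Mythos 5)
Votre démonstration est correcte, mais notez que l'article ne démontre pas cette Proposition : il se contente de la rappeler en renvoyant aux Corollaires 2.2.3 et 1.5.4 de \cite{shev} et à la Proposition 1.9 de \cite{wel1}. Votre argument --- complexe de déformation à trois termes, scindage en un sous-complexe portant l'opérateur $D_u$ sur $E_u$ et un complexe quotient portant l'action infinitésimale du reparamétrage sur $J_\sur$, suite exacte à six termes, multiplicativité des droites déterminants --- est précisément le contenu de ces références, et le calcul des exposants dans l'isomorphisme final est juste. Deux précisions seulement. D'une part, l'injectivité de $\rho$ ne découle pas formellement de la liberté de l'action de $Diff^+(\surg)$ : elle vient de ce que $u$ est injective quelque part (donc $\d u(X)=0$ force $X=0$ sur un ouvert où $u$ est une immersion) et de ce que $\mathcal{L}_X J_\sur = 0$ rend $X$ holomorphe pour $J_\sur$, donc identiquement nul ; c'est bien l'hypothèse d'injectivité quelque part qui travaille ici. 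D'autre part, l'identification $\coker(\d_m\pi)\cong\coker\mathbf{F}$ repose sur la surjectivité de la linéarisation complète (incluant la direction $\dot J$), c'est-à-dire sur la transversalité de l'espace de modules universel le long des courbes injectives quelque part, que vous invoquez à juste titre ; sans elle, seule une inclusion subsisterait. Enfin, l'identification du complexe quotient avec l'opérateur $\DB$ de l'énoncé via la multiplication par $J_\sur$ (qui échange les sous-espaces propres $\pm 1$ et commute avec $\DB_{T\surg,J_\sur}$) est exactement le calcul du Lemme \ref{tangent}, et elle est bien canonique et continue en famille, ce qui garantit un isomorphisme de fibrés et non un simple isomorphisme fibre à fibre.
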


\begin{proof}[Démonstration du Théorème \ref{metath}]
  Le Théorème \ref{metath} découle maintenant de la Proposition \ref{fredholm}, du Théorème \ref{totalaction} et du Corollaire \ref{isotsigma}. En effet, en combinant ces trois résultats on a
\[
\begin{array}{c c l}
\Pi^*\ddet(\pi) & = & \Pi^* \ddet(D) \otimes \Pi^* \ddet(\DB)^* \\
               & = &  \left(\Pi^*\PPP^+_X \otimes (\Pi_r)^*\DD  \otimes \Pi^* (\det(H^1(\surg,\RR)_{-1}))^{\otimes n} \otimes (\Pi_r)^*T_{pol}\right) \otimes \left(\Pi^* \det(H^1(\surg,\RR)_{-1})\right).  
\end{array}
\]
\end{proof}

\subsection{Deux cas particuliers}\label{deuxcas}

Nous précisons le Théorème \ref{metath} dans les deux cas traités au \S \ref{corpart}.

\subsubsection{Le cas séparant}

 Nous pouvons partitionner $\RR_\tau\MM_{g,r}^d(X)$ (resp. $\RR_\tau\MP_{g,r}^d(X)$) en sous-ensembles $\RR^{sep}_\tau\MM_{g,r}^d(X)$ (resp. $\RR^{sep}_\tau\MP_{g,r}^d(X)$) qui contient les courbes réelles séparantes et $\RR^{nsep}_\tau\MM_{g,r}^d(X)$ (resp. $\RR^{nsep}_\tau\MP_{g,r}^d(X)$) qui contient les courbes réelles non séparantes.

Remarquons que sur $\RR_\tau\MM_{g,r}^d(X)$, nous avons une fonction localement constante $k_-$ à valeurs dans $\{0,\ldots,g+1\}$ qui associe à la courbe $[u,J_\sur,J,\underline{z}]$ le nombre de composantes de $\RR_{c_\sur}\surg$ sur lesquelles $\RR E_u$ n'est pas orientable.

Considérons sur $\RR_\tau\MP_{g,r}^d(X)$ le fibré en droites réelles $O_X$ de fibre $O_{\RR E_u}$ au-dessus de $(u,J_\sur,J,\underline{z})$ (voir \S \ref{corpart}). 

Enfin, lorsque la courbe $(\surg,c_\sur)$ est séparante, c'est-à-dire lorsque $\surg\setminus\RR\surg$ a deux composantes connexes, notons $H_{c_\sur}$ l'ensemble constitué des deux orientations de $\RR_{c_\sur}\surg$ provenant de l'orientation fixée sur $\surg$ et $\RR\surg^-$ la réunion des composantes de $\RR\surg$ au-dessus dequelles $\RR E_u$ n'est pas orientable. Posons $H$ (resp. $\dethzero$) le fibré en droites réelles sur $\RR^{sep}_\tau\MP_{g,r}^d(X)$ associé au $\ZZ/2\ZZ$-fibré principal de fibre $H_{c_\sur}$ (resp. $\det(H_0(\RR\surg^-,\RR))$) au-dessus de $(u,J_\sur,J,\underline{z})$.

Tous ces fibrés en droites sur $\RR^{sep}_\tau\MP_{g,r}^d(X)$ induisent des fibrés sur $\RR^{sep}_\tau\MM_{g,r}^d(X)$ que nous noterons de la même façon. Le Corollaire suivant découle de la Proposition \ref{fredholm} et du Corollaire \ref{actionsepa}.

\begin{Corollaire}\label{MODSEP}
  Le fibré $\Pi^*\ddet(\pi)$ sur $\RR^{sep}_\tau\MM_{g,r}^d(X)$ est canoniquement isomorphe au fibré en droites
\[
  \dethun^{\otimes n-1}\otimes \dethzero\otimes
  H^{\otimes\frac{c_1(X)d+k_-}{2}} \otimes \PPP_X^+ \otimes O_X.
\]\qed
\end{Corollaire}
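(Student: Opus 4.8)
Le plan est de déduire l'énoncé directement de la Proposition \ref{fredholm} et du Corollaire \ref{actionsepa}, exactement comme le Théorème \ref{metath} résulte de la Proposition \ref{fredholm} et du Théorème \ref{totalaction}. Je me placerais au-dessus de $\RR^{sep}_\tau\MM_{g,r}^d(X)$ et, notant $\Pi$ l'oubli des points marqués vers $\RR\MM_g^d(X)$, j'utiliserais d'abord l'isomorphisme canonique
\[
\Pi^*\ddet(\pi) = \Pi^*\ddet(D)\otimes \Pi^*\ddet(\DB)^*
\]
fourni par la Proposition \ref{fredholm}. Tout revient alors à identifier séparément les deux facteurs du membre de droite.

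Pour le premier facteur, je rappellerais que $\ddet(D)$ est le fibré déterminant de l'opérateur de Cauchy-Riemann réel porté par $(E_u,c_{E_u}) = u^*(TX,\d c_X)$, fibré vectoriel complexe de rang $n$ et de degré $c_1(X)d$ sur la courbe réelle $(\surg,c_\sur)$. Comme on ne considère ici que des courbes séparantes, j'appliquerais le Corollaire \ref{actionsepa} à $N = E_u$, puis j'identifierais fibre à fibre $\dd\bigotimes_i Pin^+((\RR E_u)_i)$, $O_{\RR E_u}$, $\det(H_0(\RR\surg^-,\RR))$ et les orientations complexes de $\RR\surg$ avec les fibres des fibrés globaux $\PPP^+_X$, $O_X$, $\dethzero$ et $H$ introduits au Lemme \ref{fibr} et au \S \ref{deuxcas}. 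Compte tenu de $\deg(E_u) = c_1(X)d$, cela fournirait
\[
\Pi^*\ddet(D) = \PPP^+_X\otimes O_X\otimes \dethzero\otimes \dethun^{\otimes n}\otimes H^{\otimes\frac{c_1(X)d+k_-}{2}}.
\]

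Pour le second facteur, $\ddet(\DB)$ est le déterminant de l'opérateur de Cauchy-Riemann sur $T\surg$; j'invoquerais le Corollaire \ref{isotsigma} joint à la dualité de Poincaré $H_1(\surg,\RR)_{+1}\cong H^1(\surg,\RR)_{-1}$ (l'involution $c_\sur$ renversant l'orientation échange les sous-espaces propres) pour obtenir $\ddet(\DB)\cong \dethun$. Dans le cadre des fibrés $\ZZ/2\ZZ$-principaux, le passage au dual ne change pas la classe d'isomorphisme, de sorte que $\Pi^*\ddet(\DB)^*$ apporte un facteur $\dethun$. En tensorisant les deux identifications et en utilisant que $\dethun^{\otimes n}\otimes\dethun = \dethun^{\otimes n+1} = \dethun^{\otimes n-1}$ (l'exposant n'étant défini que modulo $2$ pour un fibré $\ZZ/2\ZZ$-principal), j'obtiendrais l'isomorphisme canonique annoncé.

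Le point délicat ne sera pas le calcul algébrique, qui est purement formel une fois les deux facteurs identifiés, mais la globalisation au-dessus de l'espace de modules de l'isomorphisme fibre à fibre du Corollaire \ref{actionsepa} : il faudra s'assurer que les identifications ci-dessus sont continues en la courbe et compatibles avec les changements de trivialisation de $(E,c_E)$, ce qui repose sur la naturalité de l'isomorphisme du Théorème \ref{totalaction} et sur les constructions de fibrés $\ZZ/2\ZZ$-principaux du Lemme \ref{fibr}.
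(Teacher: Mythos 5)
Votre démonstration est correcte et suit exactement la voie du papier : celui-ci déduit le Corollaire \ref{MODSEP} de la Proposition \ref{fredholm} et du Corollaire \ref{actionsepa}, en traitant le facteur $\ddet(\DB)^*$ par le Corollaire \ref{isotsigma} et la dualité de Poincaré, puis en réduisant l'exposant de $\dethun$ modulo $2$, comme dans la preuve du Théorème \ref{metath}. Votre remarque finale sur la globalisation fibre à fibre correspond bien à ce que garantissent la naturalité du Théorème \ref{totalaction} et les constructions du Lemme \ref{fibr}.
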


Le cas des courbes séparantes a déjà été traité par plusieurs auteurs. Welschinger dans \cite{wel3} a calculé la première classe de Stiefel-Whitney du compactifié de l'espace $\RR_\tau \MM_{0,\frac{c_1(X)d}{2}}^d(X)$ lorsque $(X,\omega,c_X)$ est une variété symplectique réelle fortement semi-positive dont la partie réelle est $Pin^\pm$. Le Corollaire \ref{MODSEP} et le Lemme \ref{tang} nous permettent de retrouver une partie de ce résultat (voir Corollaire \ref{genrz}).

\begin{Lemme}\label{tang}
  \begin{enumerate}
  \item   Si $\tau$ n'est pas l'identité, alors les fibrés $L_r$ et $H$ sur $\RR^{sep}_\tau\MM_{g,r}^d(X)$ sont orientables.
  \item   Les fibrés $L_r$ et $H$ sont orientables sur les composantes connexes de $\RR^{sep}_{\id}\MM_{g,r}^d(X)$ qui regroupent les courbes dont au moins une composante de la partie réelle contient trois points marqués.
  \end{enumerate}
\end{Lemme}

\begin{proof}
  Prenons $[u_0,(J_\sur)_0,J_0,\underline{z}_0]\in \RR_\tau\MM_{g,r}^d(X)$ et choisissons un lacet $\gamma : [0,1] \rightarrow \RR_\tau\MM_{g,r}^d(X)$ tel que $\gamma(0) = \gamma(1) = [u_0,(J_\sur)_0,J_0,\underline{z}_0]$. Ce lacet se relève en un chemin $(u_t,(J_\sur)_t,J_t,\underline{z}_t)_{t\in[0,1]}$ dans $\RR_\tau\MP_{g,r}^d(X)$ joignant $(u_0,(J_\sur)_0,J_0,\underline{z}_0)$ à $(u_1,(J_\sur)_1,J_1,\underline{z}_1) = \varphi.(u_0,(J_\sur)_0,J_0,\underline{z}_0)$, où $\varphi\in\rdif$. On obtient alors une trivialisation de $L_r$ (resp. de $H$) au-dessus de $[0,1]$ en choisissant une famille continue $\phi_t\in\rdif$, $t\in[0,1]$, telle que pour tout $t\in[0,1]$, $\phi_t^{-1}(\underline{z}_t) = \underline{z}_0$. La valeur de la première classe de Stiefel-Whitney du fibré $L_r$ (resp. de $H$) appliquée à $\gamma$ est alors donnée par le signe de l'action de $\d\phi_1^{-1} \circ \d\varphi$ sur les orientations de $\RR \left( T_{\underline{z}_0} \surg^r\right)$ (resp. de $H_{c_\sur}$). 

Dans le premier cas, $\underline{z}_0$ contient au moins une paire de points complexes conjugués. En particulier, $\phi_1^{-1}\circ\varphi$ fixe ces deux points. Comme les courbes considérées sont séparantes, cela implique que $\phi_1^{-1}\circ\varphi$ préserve les deux orientations de $\RR\surg$ provenant de l'orientation fixée sur $\surg$. Ainsi, $\phi_1^{-1}\circ\varphi$ préserve les orientations de $\RR \left( T_{\underline{z}_0} \surg^r\right)$ et de $H_{c_\sur}$.

Dans le second cas, $\phi_1^{-1}\circ\varphi$ fixe trois points sur une des composantes de $\RR\surg$. En particulier $\phi_1^{-1}\circ\varphi$ préserve les orientations de cette composante. \`A nouveau, $\phi_1^{-1}\circ\varphi$ préserve les orientations de $\RR \left( T_{\underline{z}_0} \surg^r\right)$ et de $H_{c_\sur}$.
\end{proof}

\begin{Rem}
  En général, le fibré $H$ n'est pas orientable. Considérons par exemple $\RR^{sep}\MM_{0,0}^1(\CC P^2)$. La première classe de Stiefel-Whitney de $H$ n'est pas nulle au-dessus d'un faisceau de droites réelles passant par un point fixé.
\end{Rem}

\begin{Corollaire}\label{genrz}
  Soit $(X,\omega,c_X)$ une variété symplectique réelle de dimension au moins $4$. Supposons que la partie réelle de $X$ admet une structure $Spin$. Alors pour tout $d\in H_2(X,\ZZ)$ tel que $c_1(X)d \geq 0$, pour tout $r\geq 3$, pout toute permutation $\tau$ de $\{1,\ldots,r\}$ d'ordre $2$ admettant au moins un point fixe, et pour tout $J\in\RR J^{reg}_\omega(X)$, la variété $\RR_\tau\MM_{0,r}^d(X,J)$ est orientable.\qed
\end{Corollaire}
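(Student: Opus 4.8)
Sous les hypothèses de l'énoncé, l'existence d'un point fixe de $\tau$ impose que toute courbe de $\RR_\tau\MM_{0,r}^d(X,J)$ ait une partie réelle non vide : le point marqué indexé par un point fixe de $\tau$ vérifie $z_i = c_\sur(z_i)$, donc appartient à $\RR\surg$. Comme le genre est nul, $\RR\surg$ est alors un unique cercle qui sépare $\surg = S^2$ en deux disques, et toutes les courbes considérées sont séparantes ; autrement dit $\RR_\tau\MM_{0,r}^d(X,J) = \RR^{sep}_\tau\MM_{0,r}^d(X,J)$. Je me placerai donc d'emblée dans le cadre séparant, où le Corollaire \ref{MODSEP} est disponible.

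Fixons $J\in\RR J_\omega^{reg}(X)$, de sorte que $\RR_\tau\MM_{0,r}^d(X,J) = (\pi\circ\Pi)^{-1}\{J\}$ soit une variété lisse (même argument de transversalité qu'au Théorème \ref{metath}). La première étape est d'identifier son fibré des orientations. Puisque $J$ est régulière, le conoyau de $\d(\pi\circ\Pi)$ s'annule et $\det(T\RR_\tau\MM_{0,r}^d(X,J))$ s'identifie à $\ddet(\pi\circ\Pi)_{|J}$. Le fibré vertical de l'oubli $\Pi$ des points marqués étant $\RR_\tau\bigoplus_{z\in\underline{z}} T_z\surg$, dont le déterminant est $L_r$, on a $\ddet(\pi\circ\Pi) = \Pi^*\ddet(\pi)\otimes L_r$. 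Le plan est alors d'injecter la décomposition du Corollaire \ref{MODSEP} et de vérifier l'orientabilité de chaque facteur.

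En appliquant le Corollaire \ref{MODSEP} avec $n = \dim_\CC X$ et $g=0$, on obtient
\[
\det(T\RR_\tau\MM_{0,r}^d(X,J)) = \left(\dethun^{\otimes n-1}\otimes \dethzero\otimes H^{\otimes\frac{c_1(X)d+k_-}{2}} \otimes \PPP^+_X \otimes O_X \otimes L_r\right)_{|J}.
\]
Les premiers facteurs se traitent grâce à l'hypothèse $Spin$. Le genre étant nul, $H^1(S^2,\RR)=0$, donc $\dethun^{\otimes n-1}$ est canoniquement trivial. Comme $\RR X$ est $Spin$, on a $w_1(T\RR X)=0$, et puisque $\RR E_u = (u_{|\RR\surg})^* T\RR X$, il vient $w_1(\RR E_u)=0$ : le fibré $\RR E_u$ est orientable au-dessus de chaque composante de $\RR\surg$, d'où $k_-=0$ et la trivialité de $\dethzero$. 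Une orientation globale de $\RR X$ (qui existe car $\RR X$ est orientable) se tire en arrière en une orientation continue de $\RR E_u$, ce qui trivialise $O_X$. Enfin $Spin$ entraîne $Pin^+$, donc $\PPP^+_X$ est orientable d'après le Lemme \ref{fibr} ; et $k_-=0$ force $c_1(X)d$ pair (condition d'augmentation sur $w_1(\RR E_u)$), de sorte que l'exposant $\frac{c_1(X)d}{2}$ est entier.

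Il ne reste que les facteurs $H^{\otimes\frac{c_1(X)d}{2}}$ et $L_r$, qui constituent le point délicat : le fibré $H$ n'est en effet pas orientable en général (voir la Remarque suivant le Lemme \ref{tang}). C'est précisément pour contrôler ces deux fibrés que servent les hypothèses $r\geq 3$ et la forme de $\tau$. Si $\tau\neq\id$, il possède une paire de points complexes conjugués et le premier point du Lemme \ref{tang} donne directement l'orientabilité de $L_r$ et $H$. Si $\tau=\id$, les $r\geq 3$ points marqués sont tous réels, donc situés sur l'unique composante de $\RR\surg$, et le second point du Lemme \ref{tang} s'applique. Dans les deux cas $w_1(H)=0$ et $w_1(L_r)=0$. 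Tous les facteurs du produit étant orientables, $\det(T\RR_\tau\MM_{0,r}^d(X,J))$ est trivialisable, ce qui prouve l'orientabilité de $\RR_\tau\MM_{0,r}^d(X,J)$. L'hypothèse $c_1(X)d\geq 0$ sert à garantir que l'on est dans la situation effective où le Corollaire \ref{MODSEP} prend la forme ci-dessus.
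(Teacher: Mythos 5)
Votre démonstration est correcte et suit exactement la voie que l'article indique (sans la détailler) : réduction au cas séparant via le point fixe de $\tau$ en genre nul, application du Corollaire \ref{MODSEP}, trivialisation des facteurs $\dethun$, $\dethzero$, $O_X$ et $\PPP^+_X$ grâce au genre nul et à l'hypothèse $Spin$, puis contrôle de $H$ et $L_r$ par le Lemme \ref{tang}. Les détails que vous ajoutez (non-vacuité de la partie réelle, $k_-=0$, parité de $c_1(X)d$) sont tous justes.
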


 \subsubsection{Le cas $Spin$}\label{hyperpar}

Pour $d\in H_2(X,\ZZ)$ tel que $c_1(X)d$ est pair, notons $\RR_\tau\MM_{g,r}^{d,0}(X)$ (resp. $\RR_\tau\MP_{g,r}^{d,0}(X)$) l'union des composantes de $\RR_\tau\MM_{g,r}^d(X)$ (resp. $\RR_\tau\MP_{g,r}^{d,0}(X)$) sur lesquelles nous avons $w_1(\RR E_u) = 0$ et $\RR\surg\neq \emptyset$. Considérons $\RR_\tau\MM_{g,r}^{d,Spin}(X)$ (resp. $\RR_\tau\MP_{g,r}^{d,Spin}(X)$) le revêtement à $2^{g+k-1}$ feuillets associé au $(\ZZ/2\ZZ)^{g+k-1}$-fibré principal sur $\RR_\tau\MM_{g,r}^{d,0}(X)$ (resp. $\RR_\tau\MP_{g,r}^{d,0}(X)$) dont la fibre en $[u,J_\sur,J,\underline{z}]$ est formée des structures $Spin$ réelles sur $E_u$. Nous disposons sur $\RR_\tau\MM_{g,r}^{d,Spin}(X)$ du fibré en droites réelles $\dethunw$ associé au $\ZZ/2\ZZ$-fibré principal dont la fibre au-dessus de $([u,J_\sur,J,\underline{z}],\xi)$ est composée des deux semi-orientations de $\RR E_u$ associées à $\xi$ (voir \S \ref{parspin}). D'autre part, sur chaque composante connexe de $\RR_\tau\MP_{g,r}^{d,Spin}(X)$, toutes les structures $Spin$ réelles ont même première classe de Stiefel-Whitney. On définit donc le fibré $\dethunww$ sur $\RR_\tau\MP_{g,r}^{d,Spin}(X)$ de fibre $\dd\bigotimes_{w([\RR\surg]_i) = 1}H^1((\RR\surg)_i,\RR)$ au-dessus $(u,J_\sur,J,\underline{z},\xi)$, où $\xi$ est de première classe de Stiefel-Whitney $w$. On note encore $\dethunww$ le fibré induit sur $\RR_\tau\MM_{g,r}^{d,Spin}(X)$.

Le Corollaire suivant découle de la Proposition \ref{fredholm} et du Corollaire \ref{actionspin}.

\begin{Corollaire}\label{MODNSEP}
  Soit $d\in H_2(X,\ZZ)$ tel que $c_1(X)d$ est pair. Le fibré $\Pi^*\ddet(\pi)$ sur $\RR_\tau\MM_{g,r}^{d,Spin}(X)$ est canoniquement isomorphe au fibré
\[
\dethun^{\otimes n-1}\otimes \dethunww \otimes(\dethunw)^{\otimes 1-g}\otimes \PPP_X^\pm.
\]\qed
\end{Corollaire}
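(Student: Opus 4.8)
Le plan est de calquer la démonstration du Corollaire \ref{MODSEP}, en remplaçant simplement le recours au Corollaire \ref{actionsepa} par celui au Corollaire \ref{actionspin}. Le point de départ est la Proposition \ref{fredholm}, qui fournit l'isomorphisme canonique $\ddet(\pi) = \ddet(D)\otimes\ddet(\DB)^*$. Il suffit donc d'identifier séparément les tirés-en-arrière par $\Pi$ de ces deux facteurs au-dessus du revêtement $\RR_\tau\MM_{g,r}^{d,Spin}(X)$, puis de recoller.

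Pour le premier facteur $\Pi^*\ddet(D)$, je vérifierais d'abord que sur les composantes concernées le fibré $(E_u,c_{E_u})$, avec $E_u = u^*TX$, est de rang $n$, de degré $c_1(X)d$ pair, de partie réelle non vide et vérifie $w_1(\RR E_u)=0$, de sorte que les hypothèses du Corollaire \ref{actionspin} sont satisfaites. Le revêtement $\RR_\tau\MM_{g,r}^{d,Spin}(X)$ étant précisément construit pour que sa fibre soit l'ensemble des structures $Spin$ réelles de $E_u$, la donnée tautologique d'une telle structure $\xi$ le long de ce revêtement permet d'appliquer le Corollaire \ref{actionspin} à $N=E_u$, fibre à fibre, et d'en déduire par continuité en la courbe un isomorphisme canonique de familles
\[
\Pi^*\ddet(D) \cong \PPP_X^\pm \otimes \dethun^{\otimes n}\otimes (\dethunw)^{\otimes 1-g}\otimes \dethunww.
\]
Comme $w_1(\RR E_u)$ est nulle, la Proposition \ref{pinori} assure que les actions sur les structures $Pin^+$ et $Pin^-$ coïncident, ce qui justifie l'écriture $\PPP_X^\pm$.

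Pour le second facteur, j'utiliserais le Corollaire \ref{isotsigma} donnant $\ddet(\DB)=\Det[T\surg]_{|\RR J(\surg)}\cong\det(\HH_1(\surg,\RR)_{+1})$, puis la dualité de Poincaré, qui échange les sous-espaces propres de $c_\sur$ car celui-ci renverse l'orientation, afin d'obtenir $\det(H_1(\surg,\RR)_{+1})\cong\det(H^1(\surg,\RR)_{-1})$. Tout fibré en droites réelles étant canoniquement isomorphe à son dual, on conclut $\Pi^*\ddet(\DB)^*\cong\dethun$. En tensorisant les deux identifications et en utilisant que le carré tensoriel d'un fibré en droites réelles est canoniquement trivial (donc $\dethun^{\otimes n}\otimes\dethun\cong\dethun^{\otimes n-1}$), on aboutit à l'isomorphisme annoncé.

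La seule véritable difficulté n'est pas dans cet assemblage, mais dans la globalisation : l'isomorphisme du Corollaire \ref{actionspin} est a priori défini au-dessus de $\rop\times\RR Spin(N)$ pour un fibré abstrait fixé, et il faut s'assurer qu'il se recolle de façon continue et canonique en un isomorphisme de fibrés au-dessus de $\RR_\tau\MM_{g,r}^{d,Spin}(X)$. Ceci se traite exactement comme la construction et la $Diff^+(\surg)$-équivariance des fibrés du Lemme \ref{fibr}, en trivialisant localement la famille $(E,c_E)=ev^*TX$ et en transportant l'isomorphisme fibre à fibre ; les points marqués et la permutation $\tau$ n'interviennent pas puisque $\pi$ les oublie.
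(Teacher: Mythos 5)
Votre démonstration est correcte et suit essentiellement la même voie que l'article : celui-ci déduit le Corollaire \ref{MODNSEP} directement de la Proposition \ref{fredholm} combinée au Corollaire \ref{actionspin}, exactement comme vous le faites, l'identification du facteur $\ddet(\DB)^*$ via le Corollaire \ref{isotsigma} et la dualité de Poincaré étant déjà celle utilisée dans la preuve du Théorème \ref{metath}. Votre remarque finale sur la globalisation fibre à fibre au-dessus du revêtement $Spin$ explicite un point que l'article laisse implicite, mais ne change pas l'argument.
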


 Il existe une famille de variétés donnée dans le Lemme \ref{hyperlemme} pour laquelle on peut obtenir un résultat plus précis (voir Corollaire \ref{hypercorollaire}).

 \begin{Lemme}\label{hyperlemme}
   Soit $X_\delta \subset \cp[N]$ une hypersurface complexe lisse de degré $\delta$. Supposons que $N$ est supérieur ou égal à quatre et vérifie $N = 0$ ou $3 \mod 4$, que $\delta = N+1\mod 2$ et que $X_\delta$ est de partie réelle non vide. Alors $X_\delta$ admet une unique structure $Spin$ réelle $\xi_0$, et sa partie réelle est $Spin$. De plus, si $\delta = N+1\mod 4$, alors la première classe de Stiefel-Whitney de $\xi_0$ est nulle.
 \end{Lemme}

 \begin{proof}
   Notons $i$ l'inclusion de $X_\delta$ dans $\cp[N]$. D'après la formule d'adjonction, nous avons $c_1(X_\delta) +\delta i^*h = (N+1) i^*h$, où $h$ est le générateur de $H^2(\cp[N],\ZZ)$. Comme $\delta = N+1 \mod 2$, $c_1(X_\delta)$ est divisible par deux dans $H^2(X_\delta,\ZZ)$. En particulier $X_\delta$ admet une structure $Spin$. D'autre part, d'après le Théorème de l'hyperplan de Lefschetz, on a $H^1(X_\delta,\ZZ/2\ZZ) \cong H^1(\cp[N],\ZZ/2\ZZ) = \{0\}$. Ainsi $X_\delta$ admet une unique structure $Spin$, qui est donc réelle, et que l'on note $\xi_0$.

L'existence de $\xi_0$ implique que $\RR X_\delta$ est orientable. De nouveau grâce à la formule d'adjonction on a $w_2(\RR X_\delta) = \dd\frac{N(N+1)}{2} t^2$, où $t$ est le générateur de $H^1(\rp[N],\ZZ/2\ZZ)$. Ainsi, sous les hypothèses faites sur $N$, $w_2(\RR X_\delta) = 0$ et $\RR X_\delta$ est $Spin$.   
 \end{proof}

Notons que $\cp[N-1]$ vérifie les hypothèses du Lemme \ref{hyperlemme} lorsque $N$ est pair. En appliquant le Théorème \ref{MODNSEP} et la Proposition \ref{fredholm} aux cas mis en avant dans le Lemme \ref{hyperlemme}, on obtient le Corollaire \ref{hypercorollaire} suivant.

\begin{Corollaire}\label{hypercorollaire}
Soit $X_\delta$ une hypersurface lisse de degré $\delta$ de $\cp[N]$, $N\geq 4$, $N = 0$ ou $3 \mod 4$, de partie réelle non vide et vérifiant $\delta = N+1\mod 4$ et $\delta \leq N+1$. Soit $d\in H_2(X_\delta,\ZZ)$, $r\geq 1$ et $\tau$ une permutation de $\{1,\ldots,r\}$ d'ordre $2$ et ayant au moins un point fixe. Pour $J \in\RR J_{\omega_{Fub}}^{reg}(X_\delta)$ on a
 \[
 w_1(\RR_\tau\MM_{g,r}^d(X_\delta,J)) = w_1(L_r) + (\delta-1) w_1(\dethun).
 \]
\end{Corollaire}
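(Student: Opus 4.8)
Le plan est de combiner le Lemme \ref{hyperlemme}, le Corollaire \ref{MODNSEP} et la Proposition \ref{fredholm}, la parité de $\delta$ n'intervenant qu'à la toute fin. Je commence par rassembler les données géométriques fournies par le Lemme \ref{hyperlemme} : l'hypersurface $X_\delta$ est de dimension complexe $n = N-1$, sa partie réelle $\RR X_\delta$ est orientable et $Spin$, elle admet une unique structure $Spin$ réelle $\xi_0$, et l'hypothèse $\delta = N+1\mod 4$ assure $w_{\xi_0} = 0$. Comme $c_1(X_\delta) = (N+1-\delta)i^*h$ avec $N+1-\delta$ pair, la quantité $c_1(X_\delta)d$ est paire pour tout $d$, de sorte que l'on est bien dans le cadre du \S \ref{parspin}. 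Enfin, puisque $\tau$ possède un point fixe, toute courbe de $\RR_\tau\MM_{g,r}^d(X_\delta,J)$ porte au moins un point marqué réel, donc vérifie $\RR\surg\neq\emptyset$ ; et comme $\RR X_\delta$ est orientable, on a $w_1(\RR E_u) = (u|_{\RR\surg})^*w_1(\RR X_\delta) = 0$ sur chacune de ces courbes. On travaille donc entièrement sur $\RR_\tau\MM_{g,r}^{d,0}(X_\delta)$.

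L'unicité de $\xi_0$ permet ensuite de trivialiser le revêtement $Spin$ : l'application $u\mapsto (u,u^*\xi_0)$ fournit une section canonique $s$ de $\RR_\tau\MM_{g,r}^{d,Spin}(X_\delta)\to\RR_\tau\MM_{g,r}^{d,0}(X_\delta)$. En tirant en arrière par $s$ l'isomorphisme du Corollaire \ref{MODNSEP}, j'obtiens
\[
\Pi^*\ddet(\pi)\cong \dethun^{\otimes n-1}\otimes s^*\dethunww\otimes (s^*\dethunw)^{\otimes 1-g}\otimes \PPP_X^\pm.
\]
Le cœur de la démonstration consiste alors à montrer que les trois derniers facteurs sont orientables. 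Le facteur $s^*\dethunww$ est trivial car $w_{u^*\xi_0} = (u|_{\RR\surg})^*w_{\xi_0} = 0$, donc le produit tensoriel qui définit $\dethunww$ est vide. Le facteur $\PPP_X^\pm$ est orientable d'après le Lemme \ref{fibr}, puisque $\RR X_\delta$, étant $Spin$, admet une structure $Pin^\pm$ qui se tire en arrière sur chaque $\RR E_u$. Enfin le facteur $s^*\dethunw$, dont la fibre est l'ensemble des semi-orientations de $\RR E_u$ associées à $u^*\xi_0$, est orientable : le choix d'une orientation globale de $\RR X_\delta$ (possible par le Lemme \ref{hyperlemme}) induit par tiré en arrière une semi-orientation de $\RR E_u$ variant continûment, c'est-à-dire une section globale. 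C'est précisément dans cette disparition simultanée de la dépendance en la structure $Spin$ et en le genre $g$ que réside la principale difficulté : il faudra vérifier soigneusement que cette semi-orientation canonique reste cohérente sur chaque composante connexe, et que le terme $(s^*\dethunw)^{\otimes 1-g}$ ne contribue donc pas à la classe de Stiefel-Whitney, ce qui explique l'absence de $g$ dans la formule finale.

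Il ne reste qu'à conclure. De ce qui précède on tire $w_1(\Pi^*\ddet(\pi)) = (n-1)w_1(\dethun) = (N-2)w_1(\dethun)$. La suite exacte reliant $\ker(\d(\pi\circ\Pi))$ au noyau de $\d\pi$, comme dans la démonstration du Théorème \ref{metath}, jointe à la Proposition \ref{fredholm} et au théorème des fonctions implicites appliqué en la valeur régulière $J$, donne $\det(T\RR_\tau\MM_{g,r}^d(X_\delta,J)) = L_r\otimes\Pi^*\ddet(\pi)_{|J}$, d'où
\[
w_1(\RR_\tau\MM_{g,r}^d(X_\delta,J)) = w_1(L_r) + (N-2)w_1(\dethun).
\]
Comme $\delta\equiv N+1 \pmod 2$, on a $N-2\equiv\delta-1\pmod 2$, et puisque $w_1(\dethun)$ est une classe à coefficients dans $\ZZ/2\ZZ$, seul importe la parité : $(N-2)w_1(\dethun) = (\delta-1)w_1(\dethun)$, ce qui est bien l'égalité annoncée. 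L'hypothèse $\delta\leq N+1$ n'intervient qu'à travers le Lemme \ref{hyperlemme} et la positivité de $c_1(X_\delta)d$ garantissant l'existence d'une structure presque-complexe régulière.
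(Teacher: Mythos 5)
Votre démonstration est correcte et suit essentiellement la même voie que celle de l'article : le Lemme \ref{hyperlemme} fournit l'unique structure $Spin$ réelle de classe $w_{\xi_0}=0$ qui sectionne le revêtement $\RR_\tau\MM_{g,r}^{d,Spin}(X_\delta)\to\RR_\tau\MM_{g,r}^{d}(X_\delta)$, puis le Corollaire \ref{MODNSEP} et la Proposition \ref{fredholm} donnent la formule, la parité $\delta-1\equiv N-2 \pmod 2$ concluant. Vous explicitez simplement davantage l'annulation facteur par facteur ($\dethunww$, $\dethunw$, $\PPP^\pm_X$) que l'article condense en une phrase.
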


\begin{proof}
  D'après le Lemme \ref{hyperlemme}, la partie réelle de $X_\delta$ est $Spin$, donc la première classe de Stiefel-Whitney de $(\dethunw)^{\otimes 1-g}\otimes \PPP^{\pm}_X$ est nulle. D'après ce même Lemme, $X_\delta$ admet une unique structure $Spin$ réelle qui est de plus de première classe de Stiefel-Whitney nulle. Cette structure fournit une section du revêtement $\RR_\tau\MM_{g,r}^{d,Spin}(X_\delta)\rightarrow\MM_{g,r}^{d}(X_\delta)$. On conclut alors grâce au Théorème \ref{MODNSEP} et à la Proposition \ref{fredholm}.
\end{proof}

Donnons l'analogue du Lemme \ref{tang} qui donne une condition pour calculer le terme $w_1(L_r)$ du Corollaire \ref{hypercorollaire}.

\begin{Lemme}\label{tangs}
   Le fibré $L_r$ est orientable sur les composantes connexes de $\RR_\tau\MM_{g,r}^{d,Spin}(X)$ qui regroupent les courbes dont toutes les composantes de la partie réelle contiennent trois points marqués.\qed
\end{Lemme}

\subsection{Polarisations}

Le but de ce paragraphe est de déduire du Théorème \ref{metath} des résultats sur la première classe de Stiefel-Whitney des espaces $\RR \MM_{g}^d(X,J)$. Nous y parvenons en prenant une polarisation sur $(X,\omega,c_X)$ (\S \ref{defpol}). En effet, celle-ci induit une polarisation sur certaines courbes (\S \ref{transpol}) ce qui nous donne le Théorème \ref{caspol}. Dans le \S \ref{existpol}, nous discutons l'existence de polarisations sur $(X,\omega,c_X)$.

\subsubsection{Diviseurs admissibles}\label{defpol}

Considérons $(X,\omega,c_X)$ une variété symplectique réelle de dimension $2n$, $n\geq 2$.

\begin{Definition}
  Un diviseur réel sur $(X,\omega,c_X)$ est une somme finie $D = \sum a_V V$, où $a_V\in\ZZ$ et $V\subset X$ est une sous-variété symplectique de dimension $2n-2$, telle que $(c_X)_* D = D$.
\end{Definition}

\begin{Lemme}
Soit $D$ un diviseur réel sur $(X,\omega,c_X)$. Supposons qu'il existe un élément $J_0\in \RR J_D(X) := \{J\in \RR J_{\omega}(X) \ |\ \forall V\in D,\ V\text{ est } J\text{- holomorphe}\}$ qui soit compatible avec $\omega$. Alors $\RR J_D(X)$ est une sous-variété de Banach contractile de $\RR J_{\omega}(X)$. Son espace tangent en un point $J\in\RR J_D(X)$ est isomorphe à $\{W\in \Gamma^l(X,\Lambda^{0,1}X\otimes TX)_{+1} |\ \forall V\in\ D,\ W(TV)\subset TV\}$.
\end{Lemme}

\begin{proof}
Notons $||.||_{J_0}$ la norme induite par la métrique $\omega(.,J_0.)$ sur $\Gamma^l(X,End_{\RR}(TX))$ et posons $\WW = \{W\in\Gamma^l(X,End_{\RR}(TX))|\ J_0W = -WJ_0\text{ et } ||W||_{J_0}<1\}$. Cet ensemble est muni d'une involution $(c_X)^*$. La transformation de Sévennec
\[
J\in J_{\omega}(X) \rightarrow (J+J_0)^{-1}\circ(J-J_0) \in \WW
\]
est un difféomorphisme $\ZZ/2\ZZ$-équivariant (voir la Proposition 1.1 de \cite{wel1}). L'image de $\RR J_D(X)$ sous cette application est égale à
\[
\RR \WW \cap \{ W\in \WW\ | \ \forall V\in D,\ \forall x\in V,\ W(T_x V) \subset T_x V\}.
\]
Ce dernier espace est une sous-variété de Banach contractile de $\WW$, ce qui prouve le Lemme.
\end{proof}

\begin{Definition}
On dira qu'un diviseur réel $D$ est admissible si
\begin{itemize}
\item toutes ses composantes s'intersectent transversalement,
\item toutes ses composantes apparaissent avec une multiplicité $\pm 1$,
\item et s'il existe un élément dans $\RR J_D(X)$ qui soit compatible avec $\omega$.
\end{itemize}
On notera $D_r$ la partie de $D$ formée des composantes stables par $c_X$.

 Un diviseur admissible $D$ tel que $D^{\pd} = c_1(X)\in H^2(X,\ZZ)$ et $(\RR D_r)^{\pd} = w_1(\RR X)\in H^1(\RR X,\ZZ/2\ZZ)$ est appelé une polarisation de $(X,\omega,c_X)$, qui sera alors dite polarisée.
\end{Definition}

\begin{Rem}
Pour un diviseur réel admissible quelconque, $\RR D$ n'est pas égal à $\RR D_r$. En effet, deux composantes complexes conjuguées de $D$ s'intersectent en une sous-variété symplectique de dimension $2n-4$ stable par $c_X$ qui peut avoir une partie réelle non vide de dimension $n-2$.
\end{Rem}

\subsubsection{Transversalité}\label{transpol}

Fixons un diviseur $D$ admissible sur $(X,\omega,c_X)$. Nous commençons par décrire dans le Lemme \ref{dl} le comportement local d'une courbe pseudo-holomorphe réelle $u : (\surg,c_\sur,J_\sur)\rightarrow (X,c_X,J)$ au voisinage d'un point d'intersection avec $D$. Nous prenons pour ceci des coordonnées locales de la forme suivante.

\begin{enumerate}
\item \label{casun} Si $V$ est une composante du support de $D_r$, pour chaque $p\in u^{-1}(\RR V)\cap \RR\surg$ (resp. pour chaque $q,c_\sur(q)\in u^{-1}(\RR V)\setminus \RR\surg$) on prend une carte locale $J_\sur$-holomorphe centrée en $p$ et $\ZZ/2\ZZ$-équivariante (resp. deux cartes locales $J_{\sur}$-holomorphes complexes conjuguées centrées en $q$ et $c_\sur(q)$), et des coordonnées locales $(v_1,\ldots,v_{2n-2},x,y)\in \RR^{2n}$ centrées en $u(p)$ (resp. en $u(q) = u(c_\sur(q))$), $\ZZ/2\ZZ$-équivariantes, c'est-à-dire telles que $(v_1,\ldots,v_{2n-2},x,y)\circ c_X = (v_1,\ldots,-v_{2n-2},x,-y)$, et telles que $(v_1,\ldots,v_{2n-2})$ soient des coordonnées locales pour $V$, $x = y = 0$ et $J\frac{\partial}{\partial x} = \frac{\partial}{\partial y}$ le long de $V$,
\item \label{casdeux}Si $V$ est une composante du support de $D-D_r$, pour chaque $p\in u^{-1}(V \cap \RR X)\cap \RR\surg$ (resp. pour chaque $q,c_\sur(q)\in u^{-1}(V \cap \RR X)\setminus \RR\surg$) on prend une carte locale $J_\sur$-holomorphe centrée en $p$ et $\ZZ/2\ZZ$-équivariante (resp. deux cartes locales $J_{\sur}$-holomorphes complexes conjuguées centrées en $q$ et $c_\sur(q)$), et des coordonnées locales $(v_1,\ldots,v_{2n-2},x,y)\in \RR^{2n}$ centrées en $u(p)$ (resp. en $u(q) = u(c_\sur(q))$), $\ZZ/2\ZZ$-équivariantes, c'est-à-dire telles que $(v_1,\ldots,v_{2n-2},x,y)\circ c_X = (v_1,\ldots,-v_{2n-4}, x, -y , v_{2n-3}, -v_{2n-2})$, et telles que $(v_1,\ldots,v_{2n-2})$ soient des coordonnées locales pour $V$, $x = y = 0$ et $J\frac{\partial}{\partial x} = \frac{\partial}{\partial y}$ le long de $V$,
\item \label{castrois} Si $V$ est une composante du support de $D$, pour chaque $q,c_\sur(q) \in u^{-1}(V\setminus (V\cap \RR X))\setminus \RR\surg$, on prend des cartes $J_\sur$-holomorphes complexes conjuguées centrées en $q$ et $c_\sur(q)$, et des coordonnées locales $(v_1,\ldots, v_{2n-2},x,y)$ centrées en $u(q)$ et $(v_1,\ldots,-v_{2n-2},x,-y)\circ c_X$ centrées en $u(\overline{q})$ telles que $(v_1,\ldots,v_{2n-2})$ soient des coordonnées locales de $V$ au voisinage de $u(q)$, $x=y=0$ et $J\frac{\partial}{\partial x} = \frac{\partial}{\partial y}$ le long de $V$,
\end{enumerate}

Nous pouvons maintenant énoncer le Lemme \ref{dl} qui est la version réelle du Lemme $3.4$ de \cite{IonelP}.

\begin{Lemme}\label{dl}
  Soit $J\in\RR J_D(X)$ et fixons une composante $V$ du support de $D$. Soit $u: (\surg,c_\sur,J_{\sur})\rightarrow (X,c_X,J)$ une courbe $J$-holomorphe réelle qui n'est pas contenue dans $V$. Alors, dans les coordonnées locales données ci-dessus,
\begin{itemize}
  \item[Cas \ref{casun} : ] il existe un entier $m\geq 1$ et un réel $a\in\RR^*$ (resp. un complexe $a\in\CC^*$) vérifiant
\[
\begin{array}{c}
u(z) = (O(|z|),az^m + O(|z^{m+1}|)),\text{ au voisinage de }p\\
\text{resp. } u(z) = (O(|z|),az^m + O(|z^{m+1}|)),\text{ au voisinage de }q\\
\text{et } u(z) = (O(|z|),\overline{a}z^m + O(|z^{m+1}|)),\text{ au voisinage de }c_\sur(q)\\
\end{array}
\]

\item[Cas \ref{casdeux} : ] il existe un entier $m\geq 1$ et un réel $a\in \RR^*$ (resp. un complexe $a\in\CC^*$) vérifiant
\[
\begin{array}{c}
u(z) = (O(|z|),az^m + O(|z^{m+1}|),az^m + O(|z^{m+1}|)),\text{ au voisinage de }p\\
\text{resp. }u(z) = (O(|z|), O(|z|),az^m + O(|z^{m+1}|))\text{ au voisinage de }q,\\
\text{et } u(z) = (O(|z|),\overline{a}z^m + O(|z^{m+1}|), O(|z|))\text{ au voisinage de }\overline{q}.
\end{array}
\]
\item[Cas \ref{castrois} : ]  il existe un entier $m\geq 1$ et un complexe $a\in\CC^*$ vérifiant
\[
\begin{array}{c}
u(z) = (O(|z|),az^m + O(|z^{m+1}|))\text{ au voisinage de }q,\\
  u(z) = (O(|z|),\overline{a}z^m + O(|z^{m+1}|))\text{ au voisinage de }\overline{q}.
\end{array}
\]
  \end{itemize} 
De plus, dans tous les cas, l'entier $m$ est défini indépendamment des choix de coordonnées locales. On le notera $mult_V(u,p)$ (resp. $mult_V(u,q)$).\qed
\end{Lemme}

En particulier, si $J\in\RR J_D(X)$, une courbe $J$-holomorphe qui n'est pas contenue dans $D$ intersecte le diviseur positivement en un nombre fini de points. De plus, il n'est pas difficile de voir qu'un point d'intersection de multiplicité $m$ avec une composante $V$ compte pour $m$ dans le calcul de l'intersection homologique $u(\surg)\bullet V$.

\begin{Proposition}\label{transv}
Soit $D$ un diviseur admissible sur $(X,\omega,c_X)$. La restriction de la projection $\pi : \RR\MM_g^d(X) \rightarrow \RR J_{\omega}(X)$ à l'ouvert $\{[u,J_\sur,J]\in\RR \MM_g^d(X)\ |\ u(\surg)\not\subset D\}$ est transverse à $\RR J_D(X)$.
\end{Proposition}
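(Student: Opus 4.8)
Il s'agit de la Proposition \ref{transv} : pour un diviseur admissible $D$, la restriction de $\pi : \RR\MM_g^d(X)\rightarrow\RR J_\omega(X)$ à l'ouvert des courbes non contenues dans $D$ est transverse à $\RR J_D(X)$.

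Le plan est de ramener la transversalité de $\pi$ à $\RR J_D(X)$ à un énoncé de surjectivité pour l'opérateur linéarisé, puis d'établir cette surjectivité par l'argument de conoyau standard (voir \cite{MDS}, Chapitre 3, et \cite{IonelP}), en exploitant le fait que la courbe ne rencontre le diviseur qu'en un nombre fini de points. Je travaillerais au niveau paramétré, où l'opérateur de Cauchy-Riemann généralisé $D_u$ sur $E_u = u^*TX$ est défini comme dans l'opérateur $D : \EE\rightarrow\EE'$ ci-dessus, le passage au quotient par le reparamétrage n'affectant pas l'argument.

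Première étape : reformulation. Au-dessus d'un point $[u,J_\sur,J]$ de l'ouvert considéré avec $J\in\RR J_D(X)$, la transversalité de $\pi$ à $\RR J_D(X)$ signifie $\im(\d_{[u,J_\sur,J]}\pi) + T_J\RR J_D(X) = T_J\RR J_\omega(X)$. En utilisant la description de l'espace tangent à $\RR\MM_g^d(X)$ via $D_u$ et celle de $T_J\RR J_D(X)$ donnée dans le Lemme précédent — à savoir les déformations réelles $W$ de $J$ telles que $W(TV)\subset TV$ le long de chaque composante $V$ de $D$ —, ceci équivaut à la surjectivité de l'opérateur
\[
(\xi,\dot J_\sur,W)\longmapsto D_u\xi + (\text{terme en }\dot J_\sur) + \tfrac{1}{2}\, W\circ\d u\circ J_\sur,
\]
où $\xi\in L^{k,p}(\surg,E_u)_{+1}$, $\dot J_\sur$ parcourt les déformations de la structure complexe de la source, et $W$ parcourt $T_J\RR J_D(X)$.

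Deuxième étape : localisation loin de $D$. Comme $u(\surg)\not\subset D$, le Lemme \ref{dl} assure que $u$ ne rencontre $D$ qu'en un nombre fini de points, de sorte que $\{z\in\surg\ |\ u(z)\notin D\}$ est un ouvert dense $\ZZ/2\ZZ$-invariant. En restriction à cet ouvert la contrainte $W(TV)\subset TV$ devient vide : toute déformation réelle $W$ à support dans $X\setminus D$ appartient automatiquement à $T_J\RR J_D(X)$. Loin de $D$, on dispose donc de la même liberté de perturbation que dans le problème non contraint. J'établirais alors la surjectivité par l'absurde : un élément $\eta$ du conoyau annule l'image, donc en particulier $\eta\perp\im D_u$, ce qui place $\eta$ dans le noyau de l'adjoint elliptique $D_u^*$ ; par régularité elliptique $\eta$ est lisse et vérifie le prolongement unique. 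S'il était non nul, $\eta$ le serait sur un ouvert ; comme $u$ est injective quelque part, on choisit un point $z_0$ du lieu d'injectivité stricte avec $u(z_0)\notin D$ et $\eta$ non nul près de $u(z_0)$, puis une perturbation $W$ à support dans un voisinage de $u(z_0)$ disjoint de $D$ réalisant $\int\langle\eta, W\circ\d u\circ J_\sur\rangle\neq 0$, d'où la contradiction.

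La partie délicate sera de mener cette localisation en respectant la structure réelle. Il faudra s'assurer que le point test $z_0$ peut être choisi dans le lieu d'injectivité stricte tout en évitant le lieu fini $u^{-1}(D)$ — c'est exactement ce que garantit l'hypothèse $u(\surg)\not\subset D$ via le Lemme \ref{dl} —, puis symétriser la perturbation $W$ par $c_X$ (sur le voisinage conjugué de $u(c_\sur(z_0))$ lorsque $z_0\notin\RR\surg$, ou en la prenant directement $c_X$-invariante lorsque $z_0\in\RR\surg$) afin qu'elle reste dans le $+1$-sous-espace propre, sans recréer de support sur $D$, ce qui est licite car $D$ et donc son voisinage conjugué sont disjoints de $u(z_0)$. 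Ces ajustements faits, l'argument de prolongement unique force $\eta = 0$, ce qui donne la surjectivité voulue et conclut.
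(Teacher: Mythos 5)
Votre proposition est correcte et suit essentiellement la même stratégie que la démonstration du texte : identifier un élément non nul du conoyau à une solution de l'équation adjointe (le texte l'écrit via la dualité de Serre comme un élément de $H^0_{D^N}(\surg,K_\sur\otimes (N_u)^*)_{-1}$, dont les zéros sont isolés, là où vous invoquez la régularité elliptique et le prolongement unique — c'est la même chose), puis choisir un ouvert d'injectivité $\ZZ/2\ZZ$-invariant évitant le lieu fini $u^{-1}(D)$ et y construire une perturbation réelle $\dot J$ à support disjoint de $D$, qui appartient donc automatiquement à $T_J\RR J_D(X)$ et apparie non trivialement avec l'élément du conoyau. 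Aucune lacune à signaler.
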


La démonstration suit celle de la Proposition $1.11$ de \cite{wel1}, nous passons donc assez rapidement.

\begin{proof}
 Soit $J\in\RR J_D(X)$ et $(u,J_\sur,J)\in\RR \MP_g^d(X)$ telle que $u(\surg)$ n'est pas contenue dans $D$. Notons $N_u$ le fibré normal à $u$ et $D^N : L^{k,p}(\surg,N_u)\rightarrow L^{k-1,p}(\surg,N_u)$ l'opérateur de Cauchy-Riemann généralisé induit par $D$ sur $N_u$ (voir \S 1.5 de \cite{shev} et \S 1.4 de \cite{wel1}). Le conoyau de $\d_{[u,J_\sur,J]} \pi$ est canoniquement isomorphe à $H^1_{D^N}(\surg,N_u)_{+1}$. Soit $\psi\in (H_{D^N}^1(\surg,N_u)_{+1})^* = H_{D^N}^0(\surg,K_{\sur}\otimes (N_u)^*)_{-1}$ non nul. Il nous faut trouver $\dot{J}\in T_J\RR J_D(X)$ tel que $<\psi,\dot{J}\circ\d u\circ J_{\sur}>\neq 0$.

D'après le Lemme \ref{dl}, il existe un ouvert $U\subset\surg$ non vide tel que $u(U)\cap D = \emptyset$. On peut de plus supposer que $c_{\sur}(U) = U$ et que la restriction de $u$ à $U$ est un plongement car $u$ est injective quelque part. Comme les zéros de $\psi$ sont isolés, on impose enfin que $\psi$ ne s'annule pas sur $U$. Il existe alors une section réelle $\alpha$ de $\Lambda^{0,1}\surg\otimes E_u$ à support dans $U$ et telle que $<\psi,\alpha>\neq 0$. Comme la restriction de $u$ à $U$ est un plongement, on peut prendre alors $\dot{J}\in T_J\RR J_{\omega}(X)$ à support dans un voisinage de $U$ et vérifiant $\dot{J}\circ\d u\circ J_{\sur} = \alpha$. Il est alors immédiat que $\dot{J}\in T_J\RR J_D(X)$ et vérifie $<\psi,\dot{J}\circ\d u\circ J_{\sur}>\neq 0$.
\end{proof}

D'après la Proposition \ref{transv}, l'intersection $\RR \MM_g^d(X,D) := \pi^{-1}(\RR J_D(X))\cap\{[u,J_\sur,J]\in\RR \MM_g^d(X)\ |\ u(\surg)\not\subset D\}$ est une sous-variété de Banach de $\RR \MM_g^d(X)$.

Supposons à partir de maintenant que $D$ est une polarisation de $(X,\omega,c_X)$. Décomposons $D$ en $D^+-D^-$, où $D^+$ et $D^-$ sont deux diviseurs effectifs. On dira qu'un quadruplet d'entiers naturels $\ud$ est adapté à $(D,d)$ si $d_1+2d_3 = d\bullet D^{+}$ et $d_{2} + 2d_{4} = d\bullet D^-$. En reprenant les mêmes notations qu'au \S \ref{espmod}, posons
\[
\begin{array}{c l}
\RR \MM_{g,pol}^d(X,D) = \{[u,J_\sur,J,\ux]\in\RR\MM_{g,pol}^d(X)\ | & [u,J_\sur,J]\in\RR\MM_g^d(X,D)\\
& \text{et }\ux\in\surg^{(\ud)}\text{ avec }\ud\text{ adapté à } (D,d)\}\\
& \subset \Pi^{-1}(\RR\MM_g^d(X,D)),  
\end{array}
\]
où $\Pi : \RR\MM_{g,pol}^d(X) \rightarrow \RR \MM_g^d(X)$ est l'application d'oubli de la polarisation.

Pour tout quadruplet d'entiers $\ud$, le produit symétrique $\surg^{(\ud)}$ admet un revêtement $\surg^{\ud}$ non ramifié de groupe de transformations $\SSS_{\ud}$ (voir \S \ref{diviseur}). La variété $\RR \MM_{g,pol}^d(X,D)$ hérite aussi d'un revêtement non ramifié $\eta :\RR \widetilde{\MM}_{g,pol}^d(X,D)\rightarrow\RR \MM_{g,pol}^d(X,D)$ donné par la numérotation des points marqués et dont le groupe est $\SSS_{\ud}$ au-dessus des composantes contenant les courbes dont les points marqués sont dans $\surg^{(\ud)}$.

Nous disposons alors d'une application d'évaluation
\[
ev_{pol} : \RR\widetilde{\MM}_{g,pol}^d(X,D) \rightarrow X^{pol}
\]
où $X^{pol} = \dd\bigsqcup_{
  \begin{subarray}{c}
r_+ +2s_+ = d\bullet D^+\\ r_- +2s_-=d\bullet D^-    
  \end{subarray}
} (\RR X)^{r_+ + r_-}\times (\RR X^2)^{s_++s_-}$. Notons enfin pour tout $\ud = (r_+,r_-,s_+,s_-)\in\NN^4$, $D^{\ud} =  (\RR D^+)^{r_+}\times(\RR D^-)^{r_-}\times (\RR (D^+)^2)^{s_+}\times(\RR(D^-)^2)^{s_-}$ et $D^{pol} = \dd\bigsqcup_{  \begin{subarray}{c}
\ud \text{ adapté à }(D,d)
  \end{subarray}
}D^{\ud}\subset X^{pol}$.

\begin{Definition}
Soit $D$ une polarisation de $(X,\omega,c_X)$. On pose  
\[
\begin{array}{c l}
\RR \MM_g^d(X,D)^{\pitchfork} = \{[u,J_{\sur},J]\in\RR\MM_{g}^d(X,D)| & \forall V\in D,\ \forall p\in u(\surg)\cap V,\ mult_V(u,p) = 1 \\
& \forall V'\neq V \in D, u(\surg)\cap (V\cap V') = \emptyset\}.  
\end{array}
\]
\end{Definition}

\begin{Lemme}\label{evtrans}
Soit $D$ une polarisation de $(X,\omega,c_X)$.
  \begin{itemize}
  \item
\[
\eta(ev_{pol}^{-1}(D^{pol})) \subset \Pi^{-1}\RR \MM_g^d(X,D)^{\pitchfork}.
\]
\item Pour tous $(r_+,r_-,s_+,s_-)\in \NN^4$ et $(r'_+,r'_-,s'_+,s'_-)\in \NN^4$ distincts tels que $r_++2s_+ = r'_++2s'_+ = d\bullet D^+$ et $r_-+2s_- = r'_-+2s'_- = d\bullet D^-$, $ev_{pol}^{-1}(D^{(r_+,r_-,s_+,s_-)})\cap ev_{pol}^{-1}(D^{(r'_+,r'_-,s'_+,s'_-)}) = \emptyset$.
\item L'application $ev_{pol}$ est transverse à $D^{pol}$.
  \end{itemize}
\end{Lemme}

\begin{proof}
  Soit $[u,J_\sur,J,\ux]\in ev_{pol}^{-1}(D^{pol})$. On a alors
\[
\card(\ux) = d\bullet (D^{+}+ D^-) = \dd\sum_{p\in u^{-1}(D)}\sum_{V\in D^{\pm}} mult_V(u,p) \geq \card(\ux).
\]
La première égalité provient de la définition de $\RR\MM_{g,pol}^d(X,D)$. La seconde est une conséquence du Lemme \ref{dl}. L'inégalité n'est une égalité que lorsque $u$ est de multiplicité $1$ en chaque point d'intersection avec $D$ et que ceux-ci ne se trouvent pas sur deux composantes de $D$ distinctes. Ceci montre le premier point.

Le deuxième point est immédiat. Le troisième découle du premier.
\end{proof}

Le Lemme \ref{evtrans} implique que $\eta(ev_{pol}^{-1}(D^{pol}))$ est une sous-variété de $\Pi^{-1}\RR\MM_g^d(X,D)$. De plus, la restiction de $\Pi$ à $\eta(ev_{pol}^{-1})$ est un difféomorphisme sur l'ouvert $\RR \MM_g^d(X,D)^{\pitchfork}$ de $\RR \MM_g^d(X,D)$. L'inverse de $\Pi$ est donnée par l'intersection des éléments de $\RR \MM_g^d(X,D)^{\pitchfork}$ avec $D$. 

\begin{Rem}
Les éléments de $\RR\MM_g^d(X,D)^{\pitchfork}$ sont les courbes dont tous les points d'intersection avec $D$ sont de multiplicité $1$ et se trouvent sur une seule composante de $D$. En particulier, d'après le Lemme \ref{dl}, le complémentaire $\RR\MM_g^d(X,D)\setminus \RR\MM_g^d(X,D)^{\pitchfork}$ est formé d'une strate de codimension $1$ qui contient les courbes qui ont exactement un point d'intersection réel avec $D$ de multiplicité $2$ ou bien un point d'intersection réel de multiplicité $1$ avec deux composantes distinctes de $D$, et de strates de codimensions supérieures (voir aussi \cite{Ionel}).
\end{Rem}

\begin{Def}
  Soit $(X,\omega,c_X)$ une variété symplectique réelle de dimension $2n$ et polarisée par le diviseur $D$. Une section polarisante associée à $D$ est une section $s$ réelle de $\det_J(T X) = \Lambda_J^n T X$ pour un certain $J\in \RR J_D(X)$ vérifiant
\begin{itemize}
\item $s^{-1}(\{0\}) = D$,
\item $s$ s'annule transversalement le long de chaque composante de $D$ sauf aux endroits où deux telles composantes s'intersectent,
\item le signe de chaque composante de $D$ est donné par la comparaison de l'orientation induite par $s$ avec celle venant de la forme symplectique.
\end{itemize}
\end{Def}

Nous discutons l'existence d'une telle section dans le \S \ref{existpol}.

Nous utilisons les fibrés définis par le Lemme \ref{fibr} dans le Théorème suivant.

\begin{Theoreme}\label{caspol}
Soit $(X,\omega,c_X)$ une variété symplectique réelle polarisée par le diviseur $D$. Le fibré en droites réelles $\ddet(\pi : \RR\MM_g^d(X,D)^{\pitchfork}\rightarrow \RR J_D(X))$ est canoniquement isomorphe à
\[
\PPP^+_X\otimes \Pi_*\DD\otimes \dethun^{\otimes n-1}\otimes \Pi_* T_{pol}
\]
au-dessus de $\RR\MM_g^d(X,D)^{\pitchfork}$.

De plus, si $(X,\omega,c_X)$ admet une section polarisante, alors le choix d'une telle section oriente le fibré $\DD$.
\end{Theoreme}

\begin{proof}
La première partie du résultat provient directement du Théorème \ref{metath}.

Supposons donc que $(X,\omega,c_X)$ admet une section polarisante $s_{J_0}$ de $\det_{J_0}(T X)$, pour un certain $J_0\in \RR J_D(X)$. Comme $\RR J_D(X)$ est contractile, il existe une famille continue d'isomorphismes $F_J : (\det_{J_0}(T X),\d c_X)\rightarrow (\det_{J}(T X),\d c_X)$ uniquement déterminée à homotopie près. On obtient ainsi une famille continue de section réelles polarisantes $(s_J = F_J(s_{J_0}))_{J\in\RR J_D(X)}$ de $\det_J(T X)$.

Prenons maintenant $[u,J_{\sur},J]\in\RR\MM_g^d(X,D)^{\pitchfork}$. Notons $\RR\Gamma_D(\det(E_u))$ l'espace des sections réelles $s$ de $\det(E_u)$ s'annulant seulement et transversalement aux points de $D_u =u^{-1}(u(\surg)\cap D)$ et dont l'indice d'annulation en un point est donné par le signe de ce point dans $D_u$. En particulier, la section $s_J$ induit un élément $s_u$ de $\RR\Gamma_D(\det(E_u))$. Choisissons une famille de coordonnées locales $J_{\sur}$-holomorphes $(U_x,\zeta_x)$, centrées au points $x\in D_u$ et $\ZZ/2\ZZ$-équivariantes, ainsi qu'une famille de trivialisations locales $\phi_x$ de $\det(E_u)$ sur les ouverts $U_x$.  On peut alors homotoper $s_u$ dans $\RR\Gamma_D(\det(E_u))$ en une section réelle $\tilde{s_u}$ de $\det(E_u)$ telle que dans la trivialisation $\phi_x$, $x\in D_u$, on ait $\tilde{s_u} = \pm\zeta_x$ si $mult_{D_u}(x) = +1$ et $\tilde{s_u} = \pm\overline{\zeta_x}$ si $mult_{D_u}(x) = -1$. En effet, soit $x\in D_u$ de multiplicité $+1$. Alors $\d_x s_u$ est de déterminant strictement positif. Si les deux valeurs propres de $\d_x s_u$ ne sont pas réelles positives, alors $(1-t)s_u + t\zeta_x$, $t\in [0,1]$, fournit une homotopie localement autour de $x$ que l'on prolonge sans problème. Si les deux valeurs propres de $\d_x s_u$ sont réelles positives, alors on homotope $s_u$ à $-\zeta_x$ sur $U_x$. Si $x$ est de multiplicité $-1$, on remplace localement $s_u$ par son conjugué et on applique le même procédé que ci-dessus.

On pose enfin $\tilde{\tilde{s_u}} = \tilde{s_u}$ sur $\surg\setminus \dd\bigcup_{\begin{subarray}{c} x\in D_u,\\ mult_{D_u}(x) = -1\end{subarray}}U_x$ et $\tilde{\tilde{s_u}} = \dd\frac{1}{\pm\zeta_x}$ sur $U_x$ pour $x\in D_u$ de multiplicité $-1$. On obtient alors un opérateur de Cauchy-Riemann polarisé sur $\det(E_u)$ de la façon suivante. Sur $\surg\setminus \dd\bigcup_{x\in D_u} U_x$, on a $\DB = \tilde{\tilde{s_u}}\DB_0 (\frac{1}{\tilde{\tilde{s_u}}})$ et sur $U_x$, $\DB = \frac{\tilde{\tilde{s_u}}}{\zeta_x^{mult_{D_u}(x)}}\DB_0 (\frac{\zeta_x^{mult_{D_u}(x)}}{\tilde{\tilde{s_u}}})$, où $\DB_0$ dénote l'opérateur standard sur les fonctions sur $\surg$. De plus, la section $\tilde{\tilde{s_u}}$ est méromorphe pour cet opérateur, de diviseur $D_u$. Notons de plus que l'opérateur obtenu est le seul ayant $\tilde{\tilde{s_u}}$ comme polarisation. On obtient donc une orientation de $\DD_{D_u}$. De plus, celle-ci ne dépend pas des choix de trivialisations locales faits.

Le fibré $\DD$ sur $\RR\MM_g^d(X,D)^{\pitchfork}$ est ainsi orienté.
\end{proof}

\begin{Rem}
 On montre que lorsque $(X,\omega,c_X)$ admet une section polarisante, $\DDD$ est orientable et orienté. La question naturelle qui se pose est à quelle condition deux sections polarisantes donnent la même orientation ? Par exemple, si $\RR X$ est orientable, si les deux sections induisent la même orientation sur $\RR X$ elles donnent la même orientation de $\DDD$ au-dessus des composantes de l'espace des modules qui contiennent les courbes séparantes.
\end{Rem}

\subsubsection{Exemples}\label{existpol}

\begin{Lemme}\label{sectpol}
  Soit $(X,\omega,c_X)$ une variété symplectique réelle, de partie réelle non vide, polarisée par $D$ et telle que $H^1(X,\ZZ) = 0$. Alors $(X,\omega,c_X)$ admet une section polarisante.
\end{Lemme}

\begin{proof}
  Notons $D = \sum_{i=1}^r a_i V_i + \sum_{j=1}^s a_j (W_j +c_X(W_j))$, où les $V_i$ sont symplectiques et fixes par $c_X$, et les $W_j$ sont symplectiques avec $W_j\pitchfork c_X(W_j)$. Fixons $J\in\RR J_D(X)$.

Prenons $1\leq i\leq r$ et fixons un voisinage tubulaire équivariant $U_{i}$ de $V_i$, c'est-à-dire que $c_X(U_{i}) = U_{i}$ et qu'il existe un difféomorphisme $\ZZ/2\ZZ$-équivariant et préservant les orientations $\varphi : U_{i}\rightarrow \nu(N_{V_i})$, où $\nu(N_{V_i})$ est un voisinage de la section nulle du fibré normal à $V_i$ dans $X$, envoyant $V_i$ sur la section nulle (voir \cite{bredon}). Le fibré $\pi : N_{V_i}\rightarrow V_i$ hérite d'une structure complexe et d'une structure réelle venant de celles de $TX$ et $TV_i$, ce qui en fait un fibré en droites complexes muni d'une structure réelle au-dessus de $V_i$. Choisissons une famille de trivialisations locales $\ZZ/2\ZZ$-équivariante $f^k_i : (N_{V_i})_{|U_{V_i}^k} \rightarrow U_{V_i}^k\times \CC$ au-dessus d'une famille d'ouverts $\ZZ/2\ZZ$-équivariante $U_{V_i}^k\subset V_i$, $k\in I$, et notons $g^{kl}_i : U_{V_i}^k\cap U_{V_i}^l \rightarrow \CC^*$, $k,l\in I$, les changements de trivialisations. Posons $U_i^k = \varphi^{-1}((N_{V_i})_{|U_{V_i}^k})$, $s^k_i = pr_2\circ f_i^k \circ \varphi : U_i^k \rightarrow \CC$, et $U_i^{\infty} = X\setminus V_i$ et $s^{\infty} = 1 : U_i^{\infty}\rightarrow \CC$. Alors les fonctions $G_i^{kl} = \frac{s_i^l}{s_i^k} : U_i^k\cap U_i^l \rightarrow \CC^*$, $k,l\in I\cup\{\infty\}$, sont bien définies, et même pour $k,l\in I$, $G_i^{kl} = g_i^{kl}\circ\pi$. De plus, elles définissent un fibré en droites complexes $L_{V_i}$ qui admet une structure réelle naturelle et une section $s_i = (s_i^k)_{k\in I\cup \{\infty\}}$ réelle qui s'annule transversalement et positivement exactement le long de $V_i$.

Prenons maintenant $1\leq j \leq s$. On construit de la même façon que précédemment un fibré en droites complexes $M_{W_j}$ muni d'une section $t_j$ s'annulant transversalement et positivement le long de $W_j$. Alors le fibré en droites complexes $L_{W_j} = M_{W_j} \otimes \overline{c_X^* M_{W_j}}$ est naturellement muni d'une structure réelle et d'une section réelle $s_{j+r} = t_j\otimes \overline{t_j\circ c_X}$ qui s'annule transversalement et positivement le long de $W_j$ et $c_X(W_j)$ sauf au niveau de leur intersection.

Considérons le fibré en droites complexes $L_D = \dd\bigotimes_{i=1}^r L_{V_i}^{a_i}\dd\bigotimes_{j=1}^s L_{W_j}^{a_j}$. Il est muni d'une structure réelle et d'une section $\sigma = s_1\otimes\ldots\otimes s_{r+s}$ polarisante. De plus, il est isomorphe à $\det_J(T X)$ en tant que fibré en droites complexes, et sa partie réelle a même première classe de Stiefel-Whitney que $\RR X$. Comme $H^1(X,\ZZ) = 0$, on sait que $L_D$ et $\det_J(T X)$ sont isomorphes par un isomorphisme équivariant (voir par exemple le Corollaire 2.7 de \cite{oktel}). On obtient ainsi la section voulue pour $\det_J(T X)$.
\end{proof}

\begin{Rem}
On peut relâcher la condition de simple connexité. Soulignons toutefois que ce problème d'existence d'une section polarisante associée à une polarisation donnée est relié à la classification des fibrés en droites complexes munis d'une structure réelle sur $(X,c_X)$, problème qui est non trivial en général (voir par exemple le \S 2 de \cite{oktel}).
\end{Rem}

\paragraph{Hypersurfaces projectives}\label{parhyp}

Reprenons les notations du \S \ref{hyperpar} et considérons $X_{\delta}\xhookrightarrow{i}\cp[N]$, $N\geq 3$, une hypersurface lisse réelle de partie réelle non vide et de degré $\delta$. Nous construisons maintenant des polarisations de $X_{\delta}$ pour appliquer le Théorème \ref{caspol}. D'après la formule d'adjonction, on a d'une part $c_1(X_{\delta}) = (N+1-\delta)i^*h$, où $h\in H^2(\cp[n],\ZZ)$ est le générateur hyperplan et $w_1(\RR X_{\delta}) = (N+1-\delta)i^*t$, où $t\in H^1(\rp[n],\ZZ/2\ZZ)$ est le générateur.

Notons $Q_N$ la variété des quadriques projectives de $\cp[N]$. D'après le Théorème de Bertini, le sous-ensemble $U\subset Q_N$ formé des quadriques lisses qui intersectent $X_{\delta}$ transversalement est un ouvert de Zariski non vide. D'autre part, le sous-ensemble $V\subset Q_N$ formé des quadriques sans point réel et qui intersectent transversalement leur conjugué est un ouvert non vide pour la topologie analytique. Ainsi il existe une quadrique lisse $Q\subset\cp[N]$ telle que $(Q + conj (Q))\cap X_{\delta}$ soit un diviseur. Si $\delta = N+1 \mod 4$ nous obtenons la polarisation voulue sous la forme d'une somme de quadriques complexes conjuguées sans point réel en répétant ce procédé. Si $\delta = N\mod 2$, on utilise un hyperplan réel avec le signe adéquat en plus de ces quadriques complexes conjuguées. Enfin, lorsque $\delta = N+3 \mod 4$, on ajoute une quadrique lisse réelle sans point réel aux quadriques complexes conjuguées. L'existence de ces polarisations est à nouveau garantie par le Théorème de Bertini.

Grâce à ces polarisations, nous retrouvons en particulier le Corollaire \ref{hypercorollaire}, et nous précisons les cas non traités précédemment.

\begin{Corollaire}\label{hyperbis}
Soit $N\geq 3$ et $X_\delta$ une hypersurface lisse réelle de $\cp[N]$ de partie réelle non vide et de degré $1\leq \delta\leq N+1$. Soit $D_{\delta}$ une polarisation de $X_{\delta}$ donnée comme ci-dessus, $J\in \RR J^{reg}_{D_{\delta}}(X_{\delta})$, $g,r\in\NN$, $\tau$ une permutation de $\{1,\ldots,r\}$ ayant au moins un point fixe et $d\in H_2(X_{\delta},\ZZ)$. Si $\delta = N+1\mod 4$,
\[
w_1(\RR_{\tau} \MM_{g,r}^d(X,J)) = w_1(\PPP^{\pm}_{X_\delta}) + w_1(L_r) + Nw_1(\dethun).
\]
Si $\delta = N+3 \mod 4$ 
\[
w_1(\RR_{\tau} \MM_{g,r}^d(X,J)) = w_1(\PPP^{\pm}_{X_\delta}) + w_1((\Pi_{pol})_* T_{pol}) + w_1(L_r) + Nw_1(\dethun).
\]
Si $\delta = N \mod 2$
\[
w_1(\RR_{\tau} \MM_{g,r}^d(X,D_{\delta},J)^{\pitchfork}) = w_1(\PPP^{+}_{X_\delta}) + w_1((\Pi_{pol})_* T_{pol}) + w_1(L_r) + Nw_1(\dethun).
\]
\qed
\end{Corollaire}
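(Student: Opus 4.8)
Le plan est de déduire l'énoncé du Théorème \ref{caspol} appliqué à la polarisation $D_\delta$ construite ci-dessus, en analysant au cas par cas la contribution de chaque facteur. Remarquons d'abord que pour $N\geq 3$ le Théorème de l'hyperplan de Lefschetz donne $H^1(X_\delta,\ZZ)\cong H^1(\cp[N],\ZZ) = 0$, de sorte que le Lemme \ref{sectpol} fournit une section polarisante associée à $D_\delta$; d'après la seconde partie du Théorème \ref{caspol}, cette section oriente le fibré $\DD$, qui ne contribue donc pas à la première classe de Stiefel-Whitney. Comme $X_\delta$ est de dimension complexe $n = N-1$, le facteur $\dethun^{\otimes n-1}$ du Théorème \ref{caspol} a pour première classe de Stiefel-Whitney $(N-2)w_1(\dethun) = N\,w_1(\dethun)$ dans $H^1(\cdot,\ZZ/2\ZZ)$, d'où le terme $N\,w_1(\dethun)$. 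Enfin, l'ajout de $r$ points marqués introduit le facteur $L_r$ via la suite exacte décrite à la suite du Théorème \ref{metath}, ce qui donne le terme $w_1(L_r)$.

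Le cœur de la démonstration consiste ensuite à distinguer les trois cas selon la nature géométrique de $D_\delta$. Lorsque $\delta = N \bmod 2$, la polarisation comporte un hyperplan réel, donc $\RR D_{r}\neq\emptyset$, et la partie réelle $\RR X_\delta$ n'est pas orientable puisque $w_1(\RR X_\delta) = (N+1-\delta)i^*t = i^*t\neq 0$. On applique alors directement le Théorème \ref{caspol} au fibré $\ddet(\pi)$ restreint à l'ouvert transverse $\RR\MM_g^d(X,D_\delta,J)^{\pitchfork}$, d'où la formule annoncée faisant intervenir $\PPP^{+}_{X_\delta}$ et $(\Pi_{pol})_*T_{pol}$. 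Lorsque $\delta = N+1$ ou $N+3 \bmod 4$, le Lemme \ref{hyperlemme} assure que $\RR X_\delta$ est orientable, et $D_\delta$ n'est formée que de quadriques complexes conjuguées, plus éventuellement une quadrique réelle sans point réel, de sorte que $\RR D_{r} = \emptyset$. D'après la Remarque suivant le Lemme \ref{evtrans}, la strate non transverse n'est peuplée que par des courbes ayant un point d'intersection \emph{réel} singulier avec $D_\delta$; elle est donc vide, le complémentaire de $\RR\MM_g^d(X,D_\delta,J)^{\pitchfork}$ est de codimension au moins deux, et l'on peut calculer $w_1$ sur l'espace de modules tout entier $\RR_\tau\MM_{g,r}^d(X_\delta,J)$. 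Comme $\RR X_\delta$ est orientable, les structures $Pin^+$ et $Pin^-$ donnent la même classe (Proposition \ref{pinori}), d'où le facteur $\PPP^{\pm}_{X_\delta}$.

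Il reste à comprendre la contribution du facteur $T_{pol}$, qui distingue les cas $\delta = N+1$ et $\delta = N+3 \bmod 4$. Puisque $\RR D_{r} = \emptyset$, aucun point d'intersection réel n'apparaît et l'on a simplement $T_{pol} = \det(\RR\jet_{D_{\ux}})$. D'après la description de $\det(\RR\jet_{(\ud)})$ comme quotient $\surg^{\ud}/\SSS_{\ud}^+$ obtenue dans la Proposition \ref{dtrivial}, la monodromie de ce fibré le long d'un lacet de l'espace de modules est donnée par la signature de la permutation induite dans $\SSS_{\ud}$ sur les points d'intersection. Pour une quadrique complexe $Q$ et sa conjuguée, les points d'intersection de la courbe avec $Q$ et avec $\overline{Q}$ vivent sur des composantes disjointes de $D_\delta$; un lacet ne peut induire qu'une permutation appariée de ces points, de signature $+1$, et $\det(\RR\jet)$ reste orientable. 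En revanche, pour une quadrique réelle sans point réel (cas $\delta = N+3 \bmod 4$), les deux points d'une paire conjuguée vivent sur la même composante et peuvent s'échanger le long d'un lacet; cette transposition, de signature $-1$, renverse l'orientation de $\det(\RR\jet)$. Ainsi $(\Pi_{pol})_*T_{pol}$ est orientable lorsque $\delta = N+1 \bmod 4$, et disparaît de la formule, mais ne l'est pas lorsque $\delta = N+3 \bmod 4$.

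Le point le plus délicat sera cette dernière étape : identifier précisément la monodromie de $\det(\RR\jet_{(\ud)})$ induite par les lacets géométriques de l'espace de modules, c'est-à-dire distinguer l'échange de deux points d'intersection conjugués situés sur une même composante réelle de la permutation appariée de points situés sur des composantes conjuguées distinctes, et vérifier que les contributions des paires conjuguées hors diagonale se compensent exactement de sorte que seule subsiste celle de la quadrique réelle. Il faudra aussi s'assurer que l'argument de codimension permettant d'oublier la transversalité dans les deux premiers cas est compatible avec la définition des fibrés $(\Pi_{pol})_*T_{pol}$ et $\PPP^{\pm}_{X_\delta}$ sur l'espace de modules complet.
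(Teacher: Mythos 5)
Votre démonstration est correcte et suit essentiellement la voie que le papier laisse implicite (le Corollaire y est énoncé avec un simple \qed comme conséquence directe du Théorème \ref{caspol}, du Lemme \ref{sectpol} et de la construction explicite des polarisations par quadriques) : l'orientation de $\DD$ par une section polarisante, le calcul $(n-1)w_1(\dethun) = (N-2)w_1(\dethun) = N\,w_1(\dethun)$, l'argument de codimension $\geq 2$ lorsque $\RR D_\delta = \emptyset$, et la distinction $\PPP^+ /\PPP^\pm$ via l'orientabilité de $\RR X_\delta$ sont exactement les ingrédients attendus. Votre analyse de la monodromie de $T_{pol}$ (contribution triviale des paires de quadriques complexes conjuguées, transposition de signature $-1$ pour la quadrique réelle sans point réel) est d'ailleurs confirmée par la remarque qui suit le Corollaire dans le texte, laquelle décrit $w_1(\Pi_* T_{pol})$ comme le revêtement double donné par le choix d'un point dans chaque paire conjuguée d'intersection avec cette quadrique.
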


On peut interpréter la classe $w_1(\Pi_* T_{pol})$ dans le cas $\delta = N+3 \mod 4$ du Corollaire \ref{hyperbis} de la façon suivante. La polarisation $D_{\delta}$ contient une quadrique réelle sans point réel notée $Q$. L'intersection d'un élément générique de $\RR_{\tau} \MM_{g,r}^d(X,J)$ avec $Q$ donne $d$ paires de points complexes conjugués. En choisissant un point dans chaque paire et en identifiant deux tels choix s'ils diffèrent d'un nombre pair de points, on obtient un revêtement double de $\RR_{\tau} \MM_{g,r}^d(X,J)$ dont la classe est $w_1(\Pi_* T_{pol})$.

D'autre part, dans le cas des hypersurfaces réelles de $\cp[N]$ il est aisé de déterminer lesquelles ont une partie réelle $Pin^{\pm}_{X_\delta}$ ce qui permet de déterminer des cas où $w_1(\PPP^{\pm})$ est nulle.

\paragraph{Hypersurfaces de Donaldson}

Nous généralisons le résultat du paragraphe précédent à toutes le variétés symplectiques réelles. Rappelons pour ceci une version équivariante d'un Lemme dû à Gompf \cite{gompf}.

\begin{Lemme}\label{ortho}
Soit $(X,\omega,c_X)$ une variété symplectique réelle et soient $V$ et $W$ deux sous-variétés symplectiques réelles de codimension $2$ dans $X$. Supposons de plus que $V$ et $W$ s'intersectent transversalement le long d'une sous-variété symplectique de codimension $4$ dont l'orientation venant de $\omega$ et celle venant de l'intersection coïncident. Alors il existe une isotopie $(\psi_t)_{t\in [0,1]}$ à support dans un voisinage de $V\cap W$ telle que
\begin{enumerate}
\item $\psi_0 = \id_X$,
\item $\forall t\in [0,1]$, $\psi_t\in Symp(X,\omega)$ et $\psi_t\circ c_X = c_X \circ \psi_t$,
\item $\psi_1(W)$ et $V$ s'intersectent orthogonalement pour $\omega$ le long de $W\cap V$.
\end{enumerate}
  \qed
\end{Lemme}

La démonstration de Gompf s'adapte sans problème au cas équivariant en prenant garde à considérer un voisinage tubulaire réel de l'intersection (voir \cite{bredon}). Nous renvoyons donc au Lemme 2.3 de \cite{gompf}.

\begin{Proposition}
  Toute variété symplectique réelle admet une polarisation et une section polarisante associée.
\end{Proposition}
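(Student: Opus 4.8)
The plan is to realize the anticanonical class $c_1(X)$ as the difference of two effective classes, each represented by a real symplectic hypersurface produced by the $c_X$-equivariant Donaldson construction, and then to build the polarizing section directly as a tensor product of the two defining sections, thereby sidestepping the real-line-bundle classification problem raised in the remark after Lemma \ref{sectpol}.

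First I would make the construction available. Perturbing $\omega$ within the $c_X$-anti-invariant symplectic forms to one whose class lies in $H^2(X,\QQ)$ and rescaling, I may assume $[\omega/2\pi]\in H^2(X,\ZZ)$; since being symplectic is an open condition on a submanifold and $c_1(X)$ is independent of the form, a polarization for the perturbed form is one for $\omega$. Let $L$ be a Hermitian line bundle with $c_1(L)=[\omega/2\pi]$, a connection of curvature $-i\omega$, and a metric anti-linear involution $\tilde c_L$ covering $c_X$; such a real structure exists because $c_X^*[\omega]=-[\omega]$ (the isomorphism obstruction vanishes, and the upgrade to an involution is standard, cf. \cite{oktel}). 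The bundle $\det_J(TX)$ carries the natural real structure $\det(\d c_X)$, with $c_1=c_1(X)$.

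Next I would invoke the equivariant versions of the theorems of Donaldson and Auroux: for $k$ large there exist real asymptotically holomorphic sections $s^-\in\Gamma(L^{\otimes k})$ and $s^+\in\Gamma(\det_J(TX)\otimes L^{\otimes k})$, transverse to zero, whose zero loci $V^-$ and $V^+$ are $c_X$-invariant symplectic hypersurfaces Poincaré dual to $k[\omega/2\pi]$ and to $c_1(X)+k[\omega/2\pi]$ respectively. Being zero sets of real sections they are globally stable under $c_X$, so their real parts have codimension one in $\RR X$. After a small perturbation I may assume $V^+\pitchfork V^-$, and applying the equivariant Gompf lemma (Lemma \ref{ortho}) I make them meet orthogonally along $V^+\cap V^-$, so that some $\omega$-compatible $c_X$-anti-holomorphic $J$ renders both $J$-holomorphic, i.e. $J\in\RR J_D(X)$. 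Setting $D=V^+-V^-$ gives $D^{\pd}=(c_1(X)+k[\omega/2\pi])-k[\omega/2\pi]=c_1(X)$, while $w_1(\RR(\det_J(TX)\otimes L^{\otimes k}))=w_1(\RR X)+k\,w_1(\RR L)$ and $w_1(\RR(L^{\otimes k}))=k\,w_1(\RR L)$ give $(\RR D_r)^{\pd}=w_1(\RR X)$ in $H^1(\RR X,\ZZ/2\ZZ)$; all multiplicities are $\pm 1$ and the intersections are transverse, so $D$ is a polarization.

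Finally, to produce the polarizing section I would use the Hermitian metric $h$ of $L^{\otimes k}$ to turn $s^-$ into the genuine section $\check s^-=h(\,\cdot\,,s^-)\in\Gamma((L^{\otimes k})^*)$, which vanishes exactly along $V^-$ but induces there the orientation conjugate to the complex one. Then $s:=s^+\otimes\check s^-$ is a real section of $(\det_J(TX)\otimes L^{\otimes k})\otimes(L^{\otimes k})^*=\det_J(TX)$, vanishing transversally along $V^+\cup V^-$ away from $V^+\cap V^-$, with $s^{-1}(0)=D$; the orientation it induces agrees with the symplectic one along $V^+$ (where $s^+$ is asymptotically holomorphic) and disagrees along $V^-$ (by the anti-linearity of $\check s^-$), matching the signs of $D=V^+-V^-$. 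Hence $s$ is a polarizing section. The main obstacle is the first step: one must carry the asymptotically-holomorphic and estimated-transversality machinery of Donaldson and Auroux through $c_X$-equivariantly, together with the equivariant integrality perturbation and the construction of $\tilde c_L$; the remaining steps are elementary linear algebra and intersection theory.
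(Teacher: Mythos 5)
Votre stratégie est celle du papier : produire $V^+$ (lieu des zéros d'une section réelle asymptotiquement holomorphe de $\det_J(TX)\otimes L^{\otimes k}$) et $V^-$ (lieu des zéros d'une telle section de $L^{\otimes k}$) par la construction de Donaldson--Auroux--Gayet équivariante, poser $D=V^+-V^-$, puis obtenir la section polarisante comme $s^+\otimes\check s^-$ où $\check s^-$ est le dual métrique de $s^-$ --- c'est exactement le $\sigma^M_k\otimes\overline{\tilde\sigma^{\TCC}_k}$ du papier, et vos calculs homologiques, le calcul de $w_1$ et l'analyse des signes de la section polarisante sont corrects.

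Le point faible réel est la phrase affirmant qu'après une petite perturbation on peut supposer $V^+\pitchfork V^-$, suivie de l'appel au Lemme \ref{ortho}. Ce lemme exige non seulement la transversalité, mais que $V^+\cap V^-$ soit une sous-variété \emph{symplectique} de codimension $4$ dont l'orientation d'intersection coïncide avec l'orientation symplectique ; deux hypersurfaces symplectiques peuvent très bien se couper transversalement le long d'un lieu non symplectique ou d'orientation opposée (déjà deux $2$-plans symplectiques de $\RR^4$ peuvent avoir un nombre d'intersection $-1$), et une perturbation générique ne contrôle pas cela. Exiger séparément que $s^+$ et $s^-$ soient uniformément transverses à $0$ ne rend pas la paire $(s^+,s^-)$ uniformément transverse comme section de la somme directe de rang deux, et c'est cette transversalité simultanée estimée qui force le lieu d'annulation commun à être approximativement $J$-complexe, donc symplectique et positivement orienté. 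C'est précisément pourquoi le papier transversalise la somme $s_k\oplus s'_k$ comme section de $(\det_J(TX)\oplus\TCC)\otimes L^{\otimes Nk}$ (Proposition 2 de Gayet, d'après Auroux), avec des estimées garantissant le contrôle des deux projections et de leur annulation simultanée. Deux points mineurs en outre : le papier n'affirme l'existence d'une structure réelle que sur une puissance $L^{\otimes N}$, $N\geq 2$ --- l'obstruction pour $L$ lui-même ne s'annule pas toujours, votre relèvement en involution n'est donc pas automatique ; et après l'isotopie $\psi_1$ fournie par le Lemme \ref{ortho} il faut transporter $s^-$ (en trivialisant $(\psi_t)^*L^{\otimes Nk}$) avant de former le produit tensoriel, puisque la section initiale ne découpe plus $\psi_1(V^-)$.
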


\begin{proof}
  Soit $(X,\omega,c_X)$ une variété symplectique réelle. Soit $\omega'$ une forme symplectique entière sur $X$ proche de $\omega$ et vérifiant $(c_X)^*\omega'=-\omega'$. Fixons sur $X$ une structure presque complexe réelle $J$ compatible avec $\omega'$. Soit $L$ un fibré en droites complexes sur $X$ de classe de Chern $[\omega']$ sur lequel on fixe une métrique hermitienne. Notons $M = \det_J(T X)$ et fixons une métrique hermitienne réelle sur $M$. D'après Gayet \cite{gayet} (voir aussi \cite{donald}), il existe un entier $N\geq 2$ tel que $L^{\otimes N}$ admette une structure réelle et tel qu'il existe une suite $(s_k)_{k\in \NN}$ de sections réelles de $L^{\otimes Nk}$ et une suite de sections réelles $(s'_k)_{k\in \NN}$ de $M\otimes L^{\otimes Nk}$ qui sont asymptotiquement $J$-holomorphes, c'est-à-dire que la partie $\CC$-antilinéaire de leurs dérivées covariantes sont bornées indépendamment de $k$, et transverses à $0$, c'est-à-dire que leur dérivée covariante est minorée par $\eta \sqrt{k}$ là où les sections s'annulent, où $\eta >0$ est indépendante de $k$. La suite $(s_k\oplus s'_k)_{k\in\NN}$ de sections réelles de $(M\oplus \TCC)\otimes L^{\otimes Nk}$ est alors asymptotiquement $J$-holomorphe. \`A nouveau d'après Gayet \cite{gayet}, Proposition 2 (voir aussi \cite{auroux}), on peut transversaliser cette dernière suite pour obtenir une suite $(\sigma_k)_{k\in\NN}$ de sections réelles de $(M\oplus \TCC)\otimes L^{\otimes Nk}$ qui est encore asymptotiquement $J$-holomorphe et transverse à $0$. De plus, on a des estimations sur $\sigma_k$ qui assurent que les projections $\sigma^M_k$ et $\sigma^{\TCC}_k$ de $\sigma_k$ sur les deux facteurs sont asymptotiquement $J$-holomorphes et transverses à $0$. Ainsi, pour $k$ assez grand, $V_k = (\sigma^M_k)^{-1}(\{0\})$ et $W_k = (\sigma^{\TCC}_k)^{-1}(\{0\})$ sont deux sous-variétés symplectiques réelles de codimension $2$ de $(X,\omega,c_X)$ qui s'intersectent transversalement le long d'une sous-variété symplectique réelle dont l'orientation induite par l'intersection est la même que celle induite par $\omega$. Prenons alors $(\psi^k_t)_{t\in [0,1]}$ comme dans le Lemme \ref{ortho}. En trivialisant la famille de fibrés en doites complexes munis de structures réelle $(\psi^k_t)^*(L^{\otimes Nk})$, $t\in [0,1]$, on obtient une section réelle $\tilde{\sigma}^{\TCC}_k$ de $(\psi^k_1)^*(L^{\otimes Nk})$ qui s'annule tranversalement et dont l'ensemble des zéros est $\psi_1(W_k)$. D'autre part, le diviseur $V_k - \psi_1(W_k)$ est une polarisation. En effet, comme $V_k$ et $\psi_1(W_k)$ s'intersectent orthogonalement pour $\omega$, il existe une structure presque complexe réelle compatible avec $\omega$ pour laquelle $V_k$ et $\psi_1(W_k)$ sont simultanément holomorphes. De plus, on a un isomorphisme $(M,c_M) = ((M,c_M)\otimes (L,c_L)^{\otimes Nk})\otimes \overline{(L,c_L)^{\otimes Nk}}$ qui induit une section polarisante $\sigma^M_k\otimes \overline{\tilde{\sigma}^{\TCC}_k}$ de $(M,c_M)$.
\end{proof}

\begin{Rem}
 Dans \cite{gayet}, Gayet montre que l'on peut choisir $W_k$ de partie réelle vide. Dans notre cas, il serait plus intéressant encore de pouvoir prendre $W_k$ \og imaginaire pure \fg, c'est-à-dire telle que $W_k$ et $c_X(W_k)$ s'intersectent orthogonalement pour $\omega$. Ceci permettrait d'annuler une partie de la contribution de $T_{pol}$ à l'orientabilité de l'espace de module (voir \S \ref{parhyp}). De même, on peut se demander si lorsque $\RR X$ est orientable on peut choisir $V_k$ de partie réelle vide, et si lorsque $(M,c_M)$ admet une racine carée réelle, on peut choisir $V_k$ imaginaire pure.
\end{Rem}

\bibliographystyle{plain-fr}%%plain-fr marche pas partout
\bibliography{article}

\noindent
\textsc{Universit\'e de Gen\'eve \\
Section de Math\'ematiques} \\
2-4 rue du Lièvre, Case postale 64\\
1211 Genève 4\\
Suisse

\noindent
Remi.Cretois@unige.ch\\

\end{document}